\title[Generalized Quivers, \&c]{Generalized Quivers, Orthogonal and Symplectic Representations, and Hitchin-Kobayashi Correspondences}
\author{Artur de Araujo}
\address{Departamento de Matem\'atica da Faculdade de Ci\^encias, Universidade do Porto\\ Rua do Campo Alegre 687\\4169-007 Porto}
\email{aaraujo@fc.up.pt}
\date{}
\numberwithin{equation}{section}
\newcommand{\GL}{\mathrm{GL}}
\newcommand{\SL}{\mathrm{SL}}
\newcommand{\Or}{\mathrm{O}}
\newcommand{\SO}{\mathrm{SO}}
\newcommand{\Sp}{\mathrm{Sp}}
\newcommand{\U}{\mathrm{U}}
\newcommand{\Hom}{\mathrm{Hom}}
\newcommand{\End}{\mathrm{End}}
\newcommand{\tr}{\mathrm{tr}}
\newcommand{\rk}{\mathrm{rk}}
\newcommand{\ta}{t(\alpha)}
\newcommand{\ha}{h(\alpha)}
\newcommand{\sa}{\sigma (\alpha)}
\theoremstyle{definition}           		\newtheorem{plainquiver}{Definition}[section]
\theoremstyle{definition}           		\newtheorem{twistedbundles}[plainquiver]{Definition}
\theoremstyle{plain}           		\newtheorem{principalrep}[plainquiver]{Proposition}
\theoremstyle{definition}           		\newtheorem{classification1}{Classification Problem}
\theoremstyle{definition}           		\newtheorem{classification2}{Classification Problem}
\theoremstyle{palin}           		\newtheorem{flagcharacter}[plainquiver]{Theorem}
\theoremstyle{palin}           		\newtheorem{bundlefiltration}[plainquiver]{Theorem}
\theoremstyle{definition}           		\newtheorem{cst}[plainquiver]{Definition}
\theoremstyle{plain}           		\newtheorem{morphismsfactor}[plainquiver]{Proposition}
\theoremstyle{definition}           		\newtheorem{quiversheaf}[plainquiver]{Definition}
\theoremstyle{plain}           		\newtheorem{simplifiedplainstability}[plainquiver]{Proposition}
\theoremstyle{definition}           		\newtheorem{simple}[plainquiver]{Definition}
\theoremstyle{plain}           		\newtheorem{hksimplequiverreps}[plainquiver]{Theorem}
\theoremstyle{definition}           		\newtheorem{psreps}[plainquiver]{Definition}
\theoremstyle{plain}           		\newtheorem{hkpsquivers}[plainquiver]{Theorem}
\theoremstyle{definition}           		\newtheorem{generalizedquiver}[plainquiver]{Definition}
\theoremstyle{definition}           		\newtheorem{twistedgenquiver}[plainquiver]{Definition}
\theoremstyle{definition}           		\newtheorem{generalizedbundle}[plainquiver]{Definition}
\theoremstyle{definition}           		\newtheorem{symquiver}[plainquiver]{Definition}
\theoremstyle{plain}           		\newtheorem{derksenweyman}[plainquiver]{Theorem}
\theoremstyle{plain}           		\newtheorem{dwequivariance}[plainquiver]{Corollary}
\theoremstyle{definition}           		\newtheorem{orthbundle}[plainquiver]{Definition}
\theoremstyle{definition}           		\newtheorem{remarktwisting}[plainquiver]{Remark}
\theoremstyle{plain}           		\newtheorem{orthsubspace}[plainquiver]{Lemma}
\theoremstyle{plain}           		\newtheorem{genortheq}[plainquiver]{Lemma}
\theoremstyle{plain}           		\newtheorem{symplectomorphism}[plainquiver]{Theorem}
\theoremstyle{definition}			\newtheorem{genorthst}[plainquiver]{Definition}
\theoremstyle{plain}           		\newtheorem{morphismsfactororth}[plainquiver]{Theorem}
\theoremstyle{definition}			\newtheorem{slopestabilityorth}[plainquiver]{Definition}
\theoremstyle{plain}           		\newtheorem{equivalencestability}[plainquiver]{Proposition}
\theoremstyle{definition}			\newtheorem{orthss}[plainquiver]{Definition}
\theoremstyle{plain}           		\newtheorem{orthplainrelation}[plainquiver]{Theorem}
\theoremstyle{plain}           		\newtheorem{simpleorthstable}[plainquiver]{Corollary}
\theoremstyle{plain}           		\newtheorem{onlyfixed}[plainquiver]{Corollary}
\theoremstyle{plain}           		\newtheorem{psdecomp}[plainquiver]{Lemma}
\theoremstyle{definition}			\newtheorem{riemanncase}[plainquiver]{Remark}
\theoremstyle{plain}           		\newtheorem{completesolution}[plainquiver]{Theorem}
\theoremstyle{definition}			\newtheorem{dualizing}[plainquiver]{Definition}
\theoremstyle{plain}			\newtheorem{supermixedgeneralized}[plainquiver]{Theorem}
\theoremstyle{definition}			\newtheorem{generalizedsupermixed}[plainquiver]{Definition}
\theoremstyle{definition}			\newtheorem{supermixed}[plainquiver]{Definition}
\theoremstyle{plain}			\newtheorem{geometricsupermixed}[plainquiver]{Theorem}
\theoremstyle{definition}			\newtheorem{pathalgebra}[plainquiver]{Remark}
\theoremstyle{plain}			\newtheorem{correspondencesupermixed}[plainquiver]{Theorem}
\theoremstyle{plain}			\newtheorem{supermixedbundle}[plainquiver]{Theorem}
\theoremstyle{definition}           		\newtheorem{mixed}[plainquiver]{Definition}
\theoremstyle{definition}           		\newtheorem{qmixedrep}[plainquiver]{Definition}
\theoremstyle{plain}           		\newtheorem{hkcorrespondence}[plainquiver]{Theorem}
\theoremstyle{definition}           		\newtheorem{generalsimplicity}[plainquiver]{Definition}
\theoremstyle{definition}           		\newtheorem{generalstability}[plainquiver]{Definition}
\theoremstyle{plain}           		\newtheorem{integralproperties}[plainquiver]{Proposition}
\theoremstyle{plain}           		\newtheorem{criticalintegral}[plainquiver]{Lemma}
\theoremstyle{plain}           		\newtheorem{equivalencenorms}[plainquiver]{Lemma}
\theoremstyle{definition}           		\newtheorem{mainest}[plainquiver]{Definition}
\theoremstyle{plain}           		\newtheorem{equivalenceestimates}[plainquiver]{Corollary}
\theoremstyle{plain}           		\newtheorem{properintegral}[plainquiver]{Lemma}
\theoremstyle{plain}           		\newtheorem{contradictingsequence}[plainquiver]{Lemma}
\begin{document}

\begin{abstract}
We review the theory of quiver bundles over a K\"ahler manifold, and then introduce the concept of generalized quiver bundles for an arbitrary reductive group $G$. We first study the case when $G=\Or (V)$ or $\Sp (V)$, interpreting them as orthogonal (resp. symplectic) bundle representations of the symmetric quivers introduced by Derksen-Weyman. We also study supermixed quivers, which simultaneously involve both orthogonal and symplectic symmetries. Finally, we discuss Hitchin-Kobayashi correspondences for these objects.
\end{abstract}

\maketitle

\section*{Introduction}

The main aim of this paper is to pave the way for a Lie theoretic aproach to the theory of quiver bundles with arbitrary symmetries. Quivers are geometric objects introduced by Gabriel in the 1970's, and have since found applications in a wide range of areas, including geometric representation theory, gauge theory, and mirror symmetry. They have also been used in theoretical physics (in the bundle case, under the name of gauged quivers.) Let $\mathscr{A}$ be the category of finite dimensional vector spaces, or the category of (holomorphic) vector bundles over some K\"ahler manifold $X$ (more generally we could take any category, but these are the ones of most eminent interest.) The definitions are as follows.

\begin{plainquiver} 
\begin{enumerate}
\item A \emph{quiver} $Q$ is a finite diagram in the sense of Barr; alternatively, it is a finite directed graph, with set of vertices $I$, and set of arrows $A$. We let $t:A\to I$ and $h:A\to I$ be the tail and head functions, respectively.

\item A \emph{representation} $V$ of $Q$ is a realization of the diagram $Q$ in $\mathscr{A}$; equivalently, a representation is an assignment of an object $V_i$ for each vertex $i\in I$, and a morphism $\phi_\alpha: V_{t(\alpha)}\to V_{h(\alpha)}$ for every arrow $\alpha \in A$.
\end{enumerate}
\end{plainquiver}

When $\mathscr{A}$ is the category of vector spaces, we will specify it by speaking of ``finite dimensional representations.'' To a representation in either category, we will refer as ``plain representation,'' to distinguish them from the representations with additional symmetries that we'll consider later on.

Ever since Hitchin introduced Higgs bundles, it has been aparent the need to expand the previous concept of representation to include twistings.

\begin{twistedbundles}
\begin{enumerate}
\item A \emph{twisting} for a quiver $Q$ is a choice of a vector bundle $\mathbb{M}_\alpha$ over $X$ for each arrow in $Q$.

\item A \emph{twisted quiver bundle representation} of $Q$ (or $Q$-bundles, for short,) is a choice of a vector bundle $\mathbb{V}_i$ for each vertex $i$, and a morphism $\phi_\alpha: \mathbb{V}_{\ta}\otimes \mathbb{M}_\alpha \to \mathbb{V}_{\ha}$ for each arrow.
\end{enumerate}
\end{twistedbundles}

Each representation determines two vectors, indexed by the set of vertices and morphisms, the entries of which are respectively the rank of the vector bundle over the given vertex, and the rank of the twisting bundle associated with the given arrow. We denote by $\underline{i}$ and $\underline{\alpha}$ the prescribed dimension vectors for the vector bundles over the vertices and morphims, respectively.

On each vector bundle acts its gauge group, the group of automorphisms of the vector bundle covering the identity map on the base space. If we take the product of the gauge groups of vertices, we have an obvious action on the space of $Q$-bundles just defined. In the finite dimensional case, this is the action of a closed linear group on an affine variety, and therefore falls within the scope of GIT. A study along these lines was undertaken in \cite{king}; a useful reference is also \cite{reineke}. Quiver bundles (often going over to quiver sheaves) have also been amply studied, including an inifinitesimal study of the moduli space in \cite{gk}, and a complete Hitchin-Kobayashi correspondence in \cite{ag}. We quickly review the Hitchin Kobayashi correspondence in this case in section \ref{plain}.

However, very little is known about the representations of quivers with additional symmetries. This is the direction that the present paper takes. Derksen and Weyman introduced in \cite{derksen} a Lie theoretic generalization of quivers that is suitable for the discussion of an arbitrary (reductive) group of symmetries. They called it generalized quivers, and in section \ref{generalized}, we take an apropriate point of view to extend it to the bundle sitation.

Derksen-Weyman also completely characterized the representations of generalized quivers when the reductive group in question is either the orthogonal or the symplectic group. This result can be generalized to the bundle case as well, a Hitchin-Kobayashi correspondence can easily be computed for this case, and it is also possible to relate it to the plain case. This is done in section \ref{orthreps}.

Supermixed quivers are quivers with symmetries intermixing both the orthogonal and symplectic groups, and therefore generalizing symmetric quivers. They were introduced by Zubkov \cite{zubkov1} \cite{zubkov2} \cite{mixed}, who also computed their invariants. Section \ref{supermixed} is devoted to these objects.

In section \ref{examples} we point to some further examples of geometrical interpretations of generalized quivers; though the paper itself deals with complex Lie groups, in this section we point to an example encompassing \emph{real} reductive Lie groups. In section \ref{hk} we outline the proof a refinement of the Hitchin-Kobayashi correspondence for a K\"ahler fibration, which we use throughout the paper. This refinement allows for a systematic interpretation of parameters that show up in various gauge/vortex equations.

\subsection{Acknowledgments} I want first of all to thank my adviser, Peter Gothen, for his earnest guidance. I also owe a thankful note to Ignasi Mundet i Riera for hosting me in Barcelona, and generously granting me time for discussions; also, for first pointing Popovici's article to me. I have been supported by Fundação para a Ciência e a Tecnologia, IP (FCT) under the grant SFRH/BD/89423/2012, by Funda\c c\~ ao Calouste Gulbenkian under the program Est\'imulo \`a Investigação, and by CMUP (UID/MAT/00144/2013), which is funded by FCT (Portugal) with national (MEC) and European structural funds through the programs FEDER, under the partnership agreement PT2020.

\section{The Hitchin-Kobayashi Correspondence for Twisted Quiver Bundles}\label{plain}

\subsection{Preliminaries} We quickly review plain quiver representations, framing it in a way that will better serve our purposes. Let, then, $Q$ be a quiver, $X$ a compact K\"ahler manifold, $\mathbb{M}_\alpha$ twisting bundles, and $(\mathbb{V}_i, \phi_\alpha)$ be a twisted representation of $Q$ in the category of holomorphic vector bundles over $X$. Unless otherwise stated, we will always assume our objects to be holomorphic. Let $E^\mathbb{C}_i$ denote the frame bundle of $\mathbb{V}_i$, so that $\mathbb{V}_i=E^\mathbb{C}_i\times_\rho V_i$, where $V_i$ is the fibre space of the vector bundle and $\rho$ is the natural action of its automorphism group $\GL_i:=\GL(V_i)$. Then, $E^\mathbb{C}=\prod E^\mathbb{C}_i$ is a principal $\GL(\underline{i}):=\prod \GL_i$ bundle, and denoting $\mathbb{V}=\bigoplus \mathbb{V}_i$ and $V=\bigoplus V_i$, we have $\mathbb{V}=E^\mathbb{C}\times_\rho V$, where by an abuse of notation we also denote by $\rho$ the product action of $\GL(\underline{i})$ on $V$. We refer indifferently to $E^\mathbb{C}$, $V$ and $\mathbb{V}$ as the \emph{total space of the representation}.

Conversely, given a $\GL(\underline{i})$-principal bundle $E^\mathbb{C}$, it splits into factors $E^\mathbb{C}=\prod E^\mathbb{C}_i$ of principal $\GL_i$ bundles. Then, if we additionally have vector spaces $V_i$ with the right dimensions, the fibered products $\mathbb{V}_i$ correspond to a representation of the vertices of the quiver.

In the same way, for the twisting bundles, we get a correspondence between the vector bundles $\mathbb{M}_\alpha$, on one hand, and $\GL_\alpha$ principal bundles $F_\alpha$ together with vector spaces $M_\alpha$, on the other. We denote by  $F^\mathbb{C}$, $\mathbb{M}$ and $M$ the \emph{total twisting space}.

Consider now a morphism $\phi_\alpha : \mathbb{V}_{\ta} \otimes \mathbb{M}_\alpha \to \mathbb{V}_{\ha}$. This is a global section of the bundle of twisted homomorphisms, i.e., $\phi_\alpha \in \Omega^0 \left( \mathfrak{Hom}(\mathbb{V}_{\ta}\otimes \mathbb{M}_\alpha, \mathbb{V}_{\ha}) \right)$. The frame bundle of this latter vector bundle is $E^\mathbb{C}_{\ta}\times E^\mathbb{C}_{\ha} \times F^\mathbb{C}_\alpha$ and the fibre is $\Hom (V_{\ta}\otimes M_\alpha, V_{\ha})$, where the group $\GL_{\ta} \times \GL_{\ha} \times \GL_\alpha$ acts by the appropriate conjugation. Given a total space $\mathbb{V}$ and twisting space $\mathbb{M}$, a representation is then determined by a choice of an element $\phi\in \mathscr{S}:=\Omega^0 \left( \mathfrak{Rep} (Q,\mathbb{V}) \right)$, where by definition
\begin{equation*}
\mathfrak{Rep} (Q,\mathbb{V}):=\bigoplus_\alpha \mathfrak{Hom}(\mathbb{V}_{\ta}\otimes \mathbb{M}_\alpha, \mathbb{V}_{\ha})
\end{equation*}
is the representation space of the morphisms. Let $\mathrm{Rep}(Q,V)$ be the finite-dimensional representation space of $Q$ determined by the $V_i$. We can now see that $\mathfrak{Rep}(Q,\mathbb{V})=(E^\mathbb{C}\times F^\mathbb{C})\times_\rho \mathrm{Rep}(Q,V)$, where the representation $\rho$ is
\begin{equation*}
\rho \left((g_i)_i\times (g_\alpha)_\alpha \right) (\phi_\alpha)_\alpha=(g_{\ha}\circ \phi_\alpha \circ (g_{\ta}\times g_\alpha)^{-1})_\alpha
\end{equation*}

All this can be summarized as follows.

\begin{principalrep}
Let $Q$ be a quiver. A twisting for $Q$ is equivalent to a principal $\GL (\underline{\alpha})$-bundle $F^\mathbb{C}$ together with vector spaces $M_\alpha$. A twisted representation of $Q$ with dimension vector is equivalent to the prescription of a principal $\GL(\underline{i})$-bundle $E^\mathbb{C}$ together with a finite-dimensional representation space $\mathrm{Rep}(Q,V)$, and a section $\phi \in \Omega^0 ((E^\mathbb{C}\times F^\mathbb{C})\times_\rho \mathrm{Rep}(Q,V))$.
\end{principalrep}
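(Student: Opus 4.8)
The plan is to assemble the proposition from the classical equivalence between holomorphic vector bundles and their frame bundles, applied in parallel to the vertices, the arrows, and the morphism space, and then to read off the asserted description of the data. First I would recall the basic dictionary: a holomorphic vector bundle $\mathbb{W}$ of rank $r$ over $X$ is equivalent to the datum of its frame bundle $\mathrm{Fr}(\mathbb{W})$, a principal $\GL_r(\mathbb{C})$-bundle, together with the model fibre $W\cong \mathbb{C}^r$, the reconstruction being $\mathbb{W}=\mathrm{Fr}(\mathbb{W})\times_\rho W$ with $\rho$ the standard representation. The one structural fact I would single out is that, because $\GL(\underline{i})=\prod_i \GL_i$ and $\GL(\underline{\alpha})=\prod_\alpha \GL_\alpha$ are product groups, a principal bundle for such a product is the same thing as the fibre product over $X$ of principal bundles for the factors; this is what lets me pass freely between the individual frame bundles $E^\mathbb{C}_i$, $F^\mathbb{C}_\alpha$ and the global bundles $E^\mathbb{C}$, $F^\mathbb{C}$.

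Applying this dictionary arrow-by-arrow gives the first assertion: a twisting $(\mathbb{M}_\alpha)_\alpha$ is the same as the collection of frame bundles $F^\mathbb{C}_\alpha$ together with the fibres $M_\alpha$, hence as the single principal $\GL(\underline{\alpha})$-bundle $F^\mathbb{C}=\prod_\alpha F^\mathbb{C}_\alpha$ together with the $M_\alpha$; conversely such a bundle splits into its factors and recovers $\mathbb{M}_\alpha=F^\mathbb{C}_\alpha\times_\rho M_\alpha$. The same reasoning applied vertex-by-vertex identifies the bundles $(\mathbb{V}_i)_i$ with the principal $\GL(\underline{i})$-bundle $E^\mathbb{C}=\prod_i E^\mathbb{C}_i$ together with the finite-dimensional total space $V=\bigoplus_i V_i$.

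It then remains to treat the morphisms, and the substance is the identity $\mathfrak{Rep}(Q,\mathbb{V})=(E^\mathbb{C}\times F^\mathbb{C})\times_\rho \mathrm{Rep}(Q,V)$, after which a representation is exactly the choice of a global section $\phi\in\Omega^0\bigl((E^\mathbb{C}\times F^\mathbb{C})\times_\rho \mathrm{Rep}(Q,V)\bigr)$. I would prove this by invoking the functoriality of the frame-bundle/associated-bundle construction with respect to the linear-algebra operations involved — dual, tensor product, $\Hom$, and direct sum. Concretely, for each arrow the bundle $\mathfrak{Hom}(\mathbb{V}_{\ta}\otimes\mathbb{M}_\alpha,\mathbb{V}_{\ha})$ is associated to $E^\mathbb{C}_{\ta}\times E^\mathbb{C}_{\ha}\times F^\mathbb{C}_\alpha$ through the natural action of $\GL_{\ta}\times\GL_{\ha}\times\GL_\alpha$ on $\Hom(V_{\ta}\otimes M_\alpha,V_{\ha})$; extending the structure group to the full product $\GL(\underline{i})\times\GL(\underline{\alpha})$ acting via the relevant projections and summing over $\alpha$ yields the claim, since $\mathrm{Rep}(Q,V)=\bigoplus_\alpha \Hom(V_{\ta}\otimes M_\alpha,V_{\ha})$.

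The main obstacle I anticipate is bookkeeping rather than conceptual: one must verify that the associated-bundle functor genuinely commutes with these operations and, crucially, that the resulting $\GL(\underline{i})\times\GL(\underline{\alpha})$-action on $\mathrm{Rep}(Q,V)$ is precisely the conjugation $\rho$ written above — with the inverse $(g_{\ta}\times g_\alpha)^{-1}$ acting on the domain factors and $g_{\ha}$ acting on the codomain. Keeping track of which group factor acts on which tensor slot, and in which direction, is the one place where care is needed; everything else is a direct transcription of the preliminary discussion.
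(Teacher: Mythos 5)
Your proposal is correct and follows essentially the same route as the paper: the paper also assembles the proposition from the frame-bundle/associated-bundle dictionary applied vertex-by-vertex and arrow-by-arrow, uses the splitting of principal bundles for the product groups $\GL(\underline{i})$ and $\GL(\underline{\alpha})$, and identifies $\mathfrak{Hom}(\mathbb{V}_{\ta}\otimes\mathbb{M}_\alpha,\mathbb{V}_{\ha})$ as the bundle associated to $E^\mathbb{C}_{\ta}\times E^\mathbb{C}_{\ha}\times F^\mathbb{C}_\alpha$ with the conjugation action, so that $\mathfrak{Rep}(Q,\mathbb{V})=(E^\mathbb{C}\times F^\mathbb{C})\times_\rho \mathrm{Rep}(Q,V)$ and a representation is a section of this bundle. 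Your attention to verifying the precise form of the conjugation action $\rho$ matches the one point the paper also makes explicit.
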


From our notation, it is already obvious that we are interested in passing to maximal compacts, by endowing our vector bundles with hermitian metrics. Indeed, a hermitian metric on $\mathbb{V}_i$ corresponds to a reduction $h_i \in \mathcal{A}^0 (E_i^\mathbb{C} (\GL_i/\U_i))$ of the structure group of $E_i^\mathbb{C}$ to the unitary group $\U_i:=\U(V_i)$. The product of all such reductions is a section $h\in \mathcal{A}^0 \left( \prod E_i^\mathbb{C}(\GL_i/U_i)\right)=\mathcal{A}^0 (E^\mathbb{C}(\GL(\underline{i}) /\U(\underline{i})))$, where $\U (\underline{i}):=\prod \U_i$. Clearly, $\U(\underline{i})=\GL(\underline{i})\cap \U (V)$ for a particular choice of unitary group of $V$, so this reduction actually corresponds to a choice of a hermitian metric on the total space $\mathbb{V}$ satisfying some extra conditions(namely, that it restrict to a hermitian metric on each vertex, and different vertices are orthogonal.) Denote the reduced bundle thus obtained by $E$, and, analogously for a choice of hermitian metrics on the twistings, the reduced twisting bundle by $F$. We will call a pair $(E, \phi)$ a unitary quiver bundle.

\subsection{Hermitian metrics and holomorphic structures}

We shall very quickly review some basic facts about bundles that we'll need to understand the approach to the classification problem of quiver bundles. This review is necessarily very synthetic, and for more details on this material, excellent references are \cite{gh} or \cite{kobayashi}.

Let $\mathbb{V}$ now be a smooth (complex) vector bundle over $X$. Recall that a \emph{holomorphic structure} on $\mathbb{V}$ is a differential operator $\bar{\partial}_A:\mathcal{A}^0 (\mathbb{V}) \to \mathcal{A}^1 (\mathbb{V})$ obeying the Leibniz rule $\bar{\partial}_A (f\psi)=\bar{\partial}f \otimes s + f\bar{\partial}_A s$. This differential operator can be extended to higher order forms in the usual way, and a holomorphic structure is integrable if $\bar{\partial}_A^2=0$. Endowing $\mathbb{V}$ with an integrable holomorphic structure is equivalent to turning it into a holomorphic vector bundle $\mathbb{V}_A$ (hence the name,) and in fact $\Omega^0(\mathbb{V}_A)=\ker \bar{\partial}$. Just as in the case of connections, the space of holomorphic structure is an affine space modelled on the holomorphic one-forms.

It is a standard fact that in the presence of a hermitian metric, a choice of holomorphic structure $\bar{\partial}_A$ is equivalent to the choice of a unique smooth connection $\nabla_A$ on $\mathbb{V}$. In the direction relevant to us, the connection is uniquely determined by two requirements: that the holomorphic structure (in the sense of an anti-holomorphic differential operator) equal the $(0,1)$-part of the connection, and that the metric be parallel as a (smooth) section of $T^*E\otimes \overline{T^*E}$. A perhaps more elementary statement of the last requirement is that the connection satisfy
\begin{equation}\label{metricconnection}
dh(v,w)=h(\nabla_A v,w)+h(v,\nabla_A w)
\end{equation}
for every pair of sections $v,w\in \mathcal{A}^0(E)$, where $h$ is the metric. Connections satisfying condition (\ref{metricconnection}) are said to be metric, and the unique metric connection determined by the holomorphic structure of $\mathbb{V}_A$ is called the Chern connection of the metric. Then, the outcome of all this is the following: a pair $(\mathbb{V}_A, h)$ of a holomorphic bundle together with a hermitian metric is equivalent to a pair $(\mathbb{V}, \nabla_A, h)$ where $\mathbb{V}$ is the underlying smooth bundle, and $\nabla_A$ the (smooth) Chern connection. An important fact is that the curvature $F_A$ of $\nabla_A$ is always of type $(1,1)$ (and conversely, every such connection determines a holomorphic structure.)

Let $\mathscr{G}^\mathbb{C}$ be the complex gauge group of $\mathbb{V}$, i.e., the group of linear automorphisms $g:\mathbb{V} \to \mathbb{V}$ covering the identity on $X$. There is an obvious action of $\mathscr{G}^\mathbb{C}$ on connections, namely, $g\cdot \nabla_A= g\circ \nabla_A \circ g^{-1}$. This action does not respect the metric requirement, but after fixing a hermitian metric, we may (and do) restrict to the group of unitary gauge transformations $\mathscr{G}$, acting on the space $\mathscr{A}^{1,1}$ of metric connections. Then, the correspondence above is equivariant in the sense that if two connections are unitarily equivalent, then the induced holomorphic vector bundles are isomorphic. In the presence of a hermitian structure, then, we can then alternatively see the gauge group as acting as automorphisms of the holomorphic vector bundles $\mathbb{V}_A$, or by `moving around' the metric connections. \footnote{There's a third point of view, which in fact is the most common (and perhaps most correct) in these classification problems. One could see the group of isomorphisms of holomorphic vector bundles as `moving around' hermitian metrics, rather than their connections. The Hitchin-Kobayashi correspondence, then, would be seen as a theorem on thhe hermitian structures of a holomorphic vector bundle.}

\subsection{The classification problem}

The general task of the theory of quiver bundles is the following:
\begin{classification1}
Classify quiver bundles representations with fixed underlying smooth total space, up to isomorphism.
\end{classification1}

We will often blur the distinction between a holomorphic bundle and its underlying smooth bundle, using the same notation when the context makes clear which one we mean. This restriction to a fixed underlying total space is not as restrictive as it might seem at first sight, since the smooth classification is also a \emph{topological} one. We have just seen that this problem is equivalent to

\begin{classification2}
Classify pairs $(A, \phi) \in \mathscr{A}^{1,1} \times \mathscr{S}$ up to unitary gauge transformations.
\end{classification2}

Using this correspondence between a representation with reduction and pairs of a hermitian connection with a section, the Hitchin-Kobayashi correspondence is established in terms of the latter. This correspondence establishes the equivalence between an algebraic criterion of stability, and the solutions of a certain gauge-theoretical partial differential equation. The proof of such correspondence is rather technical and involved, so we delay it to section \ref{hk}. However, the case at hand is both well known, and sufficiently illustrative of the general case, so we proceed toward the statement of the correspondence. We note that our treatment of the correspondence is the one due to Mundet i Riera in \cite{mundet}; for details, one should refer back to that article.

\subsection{The gauge equations}

The first element in a Hitchin-Kobayashi correspondence are the gauge equations, a set of PDEs envolving the Chern connection of the hermitian structure. The spaces $\mathscr{A}^{1,1}$ of connections of type $(1,1)$ on $E\times F$, and on the space of representations of morphisms $\mathscr{S}:=\Omega^0(\mathfrak{Rep}(Q,V))$ turn out to admit K\"ahler structures for which the action of the gauge group admits a moment map $\mu_{\mathscr{A}\times \mathscr{S}}: \mathscr{A}^{1,1}\times \mathscr{S} \to (\mathrm{Lie}\phantom{.}\mathscr{G})^*$. The gauge equations are then $\mu_{\mathscr{A}\times \mathscr{S}}=c$, where $c$ is a central element of $(\mathrm{Lie}\phantom{.}\mathscr{G})^*$.

Recall that we have an embedding $U(\underline{i})\to \U (V)$, which induces a $\mathrm{Ad}\phantom{.}\mathfrak{u}(\underline{i})$-equivariant isomorphism $\mathfrak{u}(\underline{i})\simeq \mathfrak{u}(\underline{i})^*$. We will quickly review the construction of the moment map for the action of the gauge group; for details, cf. \cite{mundet}.

The space $\mathscr{A}^{1,1}$ of smooth connections on $\mathbb{V}$ is an affine space modelled on $\mathcal{A}^1(E(\mathfrak{u}(\underline{i})))$. Since the almost complex structure on $X$ induces an almost complex structure on $T^*X$, it also induces one on $\mathscr{A}$. If we denote by $\Lambda : \Omega^* \to \Omega^{*-2}$ the K\"ahler endomorphism, i.e., the adjoint of wedging by the K\"ahler form on $X$, then we can define
\begin{equation*}
\omega_\mathscr{A} (A,B)=\int_X \Lambda W(A,B)
\end{equation*}
where $W$ is the wedge product followed by the pairing on $\mathfrak{k}$ induced by the faithful representation. Then, $\omega_\mathscr{A}$ is a symplectic form, the \emph{Atiyah-Bott symplectic form}. The action of the gauge group preserves this symplectic form, and indeed there is a moment map
\begin{equation*}
\mu_\mathscr{A}(A)=\Lambda F_A
\end{equation*}
where $F$ is the curvature of $A$. Since $\mathscr{A}^{1,1}$ is a complex $\mathscr{G}$-invariant subspace of $\mathscr{A}$ (recall that holomorphic structures are modelled on $\mathscr{A}$,) all this applies to it as well.

According to the splitting $E=\prod E_i$ of the $\GL (\underline{i})$-bundle into a product of $\GL_i$-bundles, each $\GL (\underline{i})$ connection splits into a sum $A=\oplus A_i$. In turn, this means that the space $\mathscr{A}$ of connections splits as $\mathscr{A}=\prod \mathscr{A}_i$. The point of this observation is the following: given two arbitrary symplectic spaces $(X_i,\omega_i)$, $i=1,2$, and two positive numbers $a_i$, the form $a_1 \omega_1+a_2 \omega_2$ makes the product $X_1\times X_2$ a symplectic space. Then, rather than taking the Atiyah-Bott form for the product group, we can take a weighted sum of the forms for the factor groups. Choose positive real numbers $a_i$, and take them as weights for the sum of symplectic forms. Then, the component along $\mathfrak{gl}_i:=\mathrm{Lie}\phantom{.} \GL_i$ of the moment map for the action of the gauge group is then
\begin{equation}\label{connectionmoment}
\mu_i (A)= a_i \Lambda F_i
\end{equation}

Since the action is trivial on the bundle $M$, the moment map for the space of its connections can be taken to be zero. Hence, we can simply extend the map defined by (\ref{connectionmoment}) by zero along $\mathfrak{gl}_\alpha$ to get a moment map on $\mathscr{A}^{1,1}$.

Our fibre space, on the other hand, is a finite-dimensional complex vector space (namely, the finite-dimensional representation space of the quiver $Q$.) In fact, it has an induced hermitian metric as soon as we choose one on each vertex of the representation. Since the action of $K$ is unitary, there is then an induced moment map for the action. In general, as explained in \cite{king}, for a vector space with hermitian form, the moment map for a unitary action is $\langle \mu (x), A \rangle= (\mathscr{X}_{iA}x, x)$, where $\mathscr{X}_A$ is the vector field induced by the action of $A$. In our case, given hermitian metrics on $V_{\ta}$, $V_{\ha}$ and $M_\alpha$, there is an induced hermitian form on $\mathfrak{gl}_{\ta}\times \mathfrak{gl}_{ha} \times \mathfrak{gl}_\alpha=\Hom (V_{\ta}\times M_\alpha, V_{\ha})$, given by $(\phi, \psi)=\tr (\phi^* \psi)$. An explanation is due about the dual of $\phi_\alpha$: formally speaking, with respect to the hermitian metrics on $\mathbb{V}$ and $\mathbb{M}$, we have $\phi_\alpha \in \Hom (\mathbb{V}_{\ha}, \mathbb{V}_{\ta}\otimes \mathbb{M}_\alpha)$, so there is no trouble with the composition $\phi_\alpha \circ \phi_\alpha^*$. For the other composition, recall that $\Hom (\mathbb{V}_{\ta}\otimes \mathbb{M}_\alpha, \mathbb{V}_{\ha})=\Hom (\mathbb{V}_{\ta}, \mathbb{M}_\alpha^* \otimes \mathbb{V}_{\ha})$. A straightforward computation gives for the component of the moment map on the fibers along $\mathfrak{gl}_i$ the expression
\begin{equation*}
-i \mu_i (\phi)=\sum_{\ha=i} \phi_\alpha \phi_\alpha^* - \sum_{\ta=i}\phi_\alpha^* \phi_\alpha =: [\phi, \phi^*]
\end{equation*}
where $i=\sqrt{-1}$, an unfortunate duplication of the letter which nonetheless should not cause confusion since we never use the indices of the vertices as constants. (In \cite{king} no such factor of $-i$ appears explicitly because the moment map already goes to $(i\mathfrak{k})^*$, and not to $\mathfrak{k}^*$.) 

This construction endows each fibre of the bundle $\mathfrak{Rep}(Q;V)$ with a symplectic form $\omega$, and this extends to a symplectic form on the space of sections $\mathscr{S}$ by the formula
\begin{equation*}
\omega_\mathscr{S} (s,v)=\int_X \omega (s(x),v(x))
\end{equation*}
Here, $s, v \in \mathscr{S}$. It follows that the action of the gauge group is symplectic, and that there is a moment map $\mu$ which is fibrewise the moment map of the K\"ahler fibre.

Finally, a central element in the Lie algebra $\mathfrak{gl}_i$ is just a constant multiple of the identity $i\tau_i \mathrm{id}_i$, with $\tau_i$ a real number. Thus, the components of the gauge equation are
\begin{equation}
i a_i \Lambda F_i + [\phi, \phi^*] = \tau_i \mathrm{id}_i
\end{equation}
Note that if we added a parameter for the moment map on the sections, it could just simply be absorbed by the two other parameters.

\subsection{Parabolic subgroups}

Parabolic subgroups play an important role in the abstract Hitchin-Kobayashi correspondence; the references for this material are \cite{mundet} and \cite{ggm}. Let $G$ be a connected complex reductive Lie group, $K$ a maximal compact, $\mathfrak{g}$ and $\mathfrak{k}$ the corresponding Lie algebras. As above, suppose a faithful  representation $\rho: K \to \U(V)$ is given, along with an induced, implicit isomorphism $\mathfrak{k}\simeq \mathfrak{k}^*$.

If $\mathfrak{z}$ is the centre of $\mathfrak{g}$ and $T$ a maximal torus of $K$, there is a choice of Cartan subalgebra $\mathfrak{h}$ such that $\mathfrak{z}\oplus \mathfrak{h}=\mathfrak{t}^\mathbb{C}$, where $\mathfrak{t}=\mathrm{Lie}\phantom{.}T$. Let $\Delta$ be a set of roots, and $\Delta '$ be a choice of simple roots for the Cartan decomposition of $\mathfrak{g}$ with respect to $\mathfrak{h}$. For any subset $A=\{ \alpha_{i_1},...,\alpha_{i_s}\} \subset \Delta '$, define
\begin{equation*}
D_A=\{ \alpha \in R | \alpha =\sum m_j\alpha_j, \textrm{ $m_{i_t}\geq 0$ for $1\leq t\leq s$} \}
\end{equation*}

The \emph{parabolic subalgebra} associated to $A$ is $\mathfrak{p}=\mathfrak{z}\oplus \mathfrak{h}\oplus \bigoplus_{\alpha\in D_A} \mathfrak{g}_\alpha$; the subgroup $P$ of $G$ determined by this subalgebra is the \emph{parabolic subgroup} determined by $A$. A \emph{dominant (resp. antidominant) character} of $P$ is a positive (resp. negative) combination of the fundamental weights $\lambda_{i_1},...,\lambda_{i_s}$ plus an element of the dual of $i(\mathfrak{z}\cap \mathfrak{k})$.

Due to our choice of Cartan subalgebra, and using our implicit isomorphism $\mathfrak{k} \simeq \mathfrak{k}^*$, an anti-dominant character $\chi$ of $P$, may be identified with an element of $i\mathfrak{k}$; we'll still denote by $\chi$. We have that $\rho(\chi)$ is hermitian (since $\chi \in i\mathfrak{k}$,) it has real eigenvalues $\lambda_1< ... <\lambda_j<...<\lambda_r$, and it diagonalizes. In other words, $\chi$ induces a filtration
\begin{equation*}
0 \neq V^1\varsubsetneq ... \varsubsetneq V^r =V
\end{equation*}
where $V^k=\bigoplus_{i\leq k}V_{\lambda_k}$ is the sum of all eigenspaces $V_{\lambda_i}$ with $i\leq k$. The following theorem is from \cite{mundet}, section 2:
\begin{flagcharacter}
Let $\chi \in i\mathfrak{k}$. Then, the pre-image by $\rho$ of the stabilizer of the induced flag is a parabolic group $P(\chi)$, and $\chi$ is the dual of an antidominant character of $P(\chi)$. Further, given an arbitrary parabolic subgroup $P$ with Lie algebra $\mathfrak{p}$, there is a choice of a Cartan subalgebra $\mathfrak{h}\subset \mathfrak{g}$ contained in $\mathfrak{p}$ such that $\chi \in \mathfrak{h}$ and is antidominant with respect to $P$ if and only if $P$ stabilizes the flag induced by $\chi$.
\end{flagcharacter}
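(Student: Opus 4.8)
The plan is to realize $P(\chi)$ as the parabolic subgroup associated to the real one-parameter subgroup $t \mapsto \exp(t\chi)$, and to identify it with the flag stabilizer by a direct eigenvalue computation on $V$. Since $\rho(\chi)$ is hermitian with eigenvalues $\lambda_1 < \cdots < \lambda_r$, for $g \in \GL(V)$ the $(j,k)$ block of $\exp(t\rho(\chi))\, g\, \exp(-t\rho(\chi))$ in the eigenspace decomposition is $e^{t(\lambda_j - \lambda_k)} g_{jk}$; hence the limit as $t \to +\infty$ exists precisely when $g_{jk}=0$ for $j>k$, that is, when $g$ preserves the flag $V^1 \subsetneq \cdots \subsetneq V^r$. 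Pulling back through the faithful (complexified) representation $\rho : G \to \GL(V)$, this shows
\[
P(\chi) = \rho^{-1}\!\left(\mathrm{Stab}(V^\bullet)\right) = \{\, g \in G : \lim_{t\to +\infty} \exp(t\chi)\, g\, \exp(-t\chi)\ \text{exists in}\ G \,\},
\]
which is the standard description of a parabolic subgroup; the remaining task is only to match this with the root-theoretic definition recalled above.

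To do so I would first choose a maximal torus $T \subset K$ with $\chi \in i\mathfrak{t}$ --- possible since every element of $i\mathfrak{k}$ is conjugate into $i\mathfrak{t}$ --- and take $\mathfrak{h}$ with $\mathfrak{z} \oplus \mathfrak{h} = \mathfrak{t}^\mathbb{C}$. Because $\chi \in i\mathfrak{t}$, every root value $\alpha(\chi)$ is real and $\mathrm{ad}(\chi)$ acts on $\mathfrak{g}_\alpha$ by $\alpha(\chi)$, so the Lie algebra of $P(\chi)$ is $\mathfrak{z} \oplus \mathfrak{h} \oplus \bigoplus_{\alpha(\chi) \le 0} \mathfrak{g}_\alpha$. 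Choosing a base $\Delta'$ of simple roots in the closed chamber where $\chi$ is antidominant, so that $\alpha(\chi) \le 0$ for all $\alpha \in \Delta'$, and setting $A = \{ \alpha \in \Delta' : \alpha(\chi) < 0 \}$, the sign-coherence of root coordinates (every root has coefficients all of one sign) gives exactly $D_A = \{ \alpha \in R : \alpha(\chi) \le 0 \}$, whence $\mathfrak{p}(\chi) = \mathfrak{p}_A$. Finally, reading $\chi$ as a functional through $\mathfrak{k} \simeq \mathfrak{k}^*$, the inequalities $\alpha(\chi) \le 0$ on simple roots say precisely that $\chi$ is a nonpositive combination of the fundamental weights $\lambda_{i_1}, \dots, \lambda_{i_s}$ modulo a term in the dual of $i(\mathfrak{z} \cap \mathfrak{k})$, i.e. $\chi$ is the dual of an antidominant character of $P(\chi)$.

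For the second assertion I would establish the biconditional using a Cartan $\mathfrak{h} \subset \mathfrak{p}$ containing $\chi$. For the forward implication, antidominance of $\chi$ means $\chi$ is supported on the fundamental weights indexed by $A$, so $\alpha(\chi)=0$ on the roots of the Levi factor of $P$, while $\alpha(\chi) \le 0$ on every positive root; since $\mathfrak{p}=\mathfrak{p}_A=D_A$ consists of the positive roots together with the negative Levi roots, every root space of $\mathfrak{p}$ lies in the $\le 0$ eigenspace of $\mathrm{ad}(\chi)$, giving $\mathfrak{p} \subseteq \mathfrak{p}(\chi)$ and hence $P \subseteq P(\chi)$, i.e. $P$ stabilizes the flag. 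Conversely, if $P$ stabilizes the flag then $P \subseteq P(\chi)$; since $\chi$ is central in the Levi $L(\chi)$ of $P(\chi)$, any Borel of $G$ contained in $P$ corresponds to a Borel of $L(\chi)$ whose Cartan is a Cartan of $L(\chi)$, and such a Cartan necessarily contains the center $Z(\mathfrak{l}(\chi)) \ni \chi$, placing $\chi$ in a Cartan inside $\mathfrak{p}$ with respect to which the first part makes it antidominant.

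The step I expect to require the most care is not the conceptual content --- which is the familiar dictionary between one-parameter subgroups, parabolics, and flags --- but the simultaneous bookkeeping of sign conventions: reconciling the ordering $\lambda_1 < \cdots < \lambda_r$, the direction $t\to+\infty$ of the limit, the shape of $D_A$, and the meaning of ``antidominant'' as a negative combination of fundamental weights, all while tracking the reductive center $\mathfrak{z}$ and the implicit identification $\mathfrak{k} \simeq \mathfrak{k}^*$. A secondary point is the justification that the limit subgroup is parabolic in the precise root-theoretic sense used here rather than merely Zariski closed, which is exactly what the explicit identification $\mathfrak{p}(\chi) = \mathfrak{p}_A$ is designed to supply.
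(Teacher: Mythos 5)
The paper itself contains no proof of this statement: it is quoted from Mundet i Riera's article (reference \cite{mundet}, Section 2), so the only comparison available is with that source, whose argument is essentially the one you give --- the dictionary between elements of $i\mathfrak{k}$, one-parameter subgroups, and flag stabilizers. Your outline is correct: the block computation identifying $\rho^{-1}(\mathrm{Stab}(V^\bullet))$ with the limit subgroup, the computation of its Lie algebra as $\mathfrak{z}\oplus\mathfrak{h}\oplus\bigoplus_{\alpha(\chi)\le 0}\mathfrak{g}_\alpha$, and the matching with $D_A$ via sign-coherence of root coordinates are exactly the standard route.

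Two compressed points deserve attention. First, pulling the limit back through $\rho$ uses that $\rho(G)$ is closed in $\GL(V)$ (true, since it is the complexification of the compact group $\rho(K)$), and passing from ``closed subgroup whose Lie algebra is $\mathfrak{p}_A$'' to ``equals the parabolic subgroup $P_A$'' uses that parabolics are connected and self-normalizing. Second, and more substantively, in the converse of the second assertion your phrase ``with respect to which the first part makes it antidominant'' hides a real step. If $P\subsetneq P(\chi)$, then $P=P_{A_P}$ with $A_P\supsetneq A_\chi=\{\alpha\in\Delta' : \alpha(\chi)<0\}$, and the character dual to $\chi$ has zero coefficient on the fundamental weights indexed by $A_P\setminus A_\chi$; so ``negative combination'' must be read non-strictly (coefficients $\le 0$) --- which is indeed Mundet's convention, and the reading forced on the theorem, since under a strict reading the biconditional would fail for every $P$ strictly contained in $P(\chi)$. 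One must also check that the simple roots in $A_P\setminus A_\chi$ genuinely contribute nothing, i.e.\ that every simple root of the Levi of $P$ pairs to zero with $\chi$: this holds because $\pm\alpha$ both lie in $\mathfrak{p}\subseteq\mathfrak{p}(\chi)$, forcing $\alpha(\chi)\le 0$ and $-\alpha(\chi)\le 0$ simultaneously. Finally, your Cartan must be exhibited together with a compatible base: taking the base determined by a Borel $B\subseteq P$, all of whose roots pair $\le 0$ with $\chi$ (as $\mathfrak{b}=\mathfrak{b}_L\oplus\mathfrak{u}(\chi)$ with $\mathfrak{b}_L$ a Borel of the Levi $\mathfrak{l}(\chi)$), makes $P$ standard and $\chi$ antidominant at once; with these verifications spelled out your argument is complete.
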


It is important, however, to keep in mind that this applies to filtrations induced by an element of $i\mathfrak{k}$, and not just any filtration of $V$. For $\GL (V)$, it is true that any flag is induced by such an element. For an orthogonal or symplectic group, however, the flags induced in this way are \emph{isotropic} flags, i.e., flags where $V^{r-k}=(V^{k})^\perp$ (so that for $k\leq r/2$, the spaces are actually isotropic.)

Again following Mundet in \cite{mundet}, if we additionally are given a holomorphic reduction $\pi: X\to E(G/P)$ to $P$, the pair $(\pi, \chi)$ determines an element $g_{\pi, \chi}\in \Omega^0 (E\times_\mathrm{Ad} i\mathfrak{k})$ which is fibrewise the dual of $\chi$. Considered as an endomorphism of $\mathbb{V}:= E\times_\rho V$ (through the representation $\rho$,) it has almost-constant eigenvalues $\lambda_1< ... <\lambda_k<...<\lambda_r$,  and so induces a filtration
\begin{equation*}
0\subset \mathbb{V}^1 \subset ... \subset \mathbb{V}^k \subset ... \subset \mathbb{V}^r=\mathbb{V}
\end{equation*}
which is defined outside of a codimension-two submanifold. Here, $\mathbb{V}^{\lambda_k}=\bigoplus_{i\leq j} \mathbb{V}(\lambda_i)$ is the sum of all eigenbundles with $\lambda_i\leq \lambda_k$. Again, from \cite{mundet}, we have

\begin{bundlefiltration}
If the reduction is holomorphic, for any antidominant character $\chi$, the induced filtration is holomorphic. Conversely, an element $g\in \Omega^0 (E\times_{\mathrm{Ad}} i\mathfrak{k})$ with constant eigenvalues determines a holomorphic reduction $\pi$ and an antidominant character $\chi$ such that $g=g_{\pi, \chi}$.
\end{bundlefiltration}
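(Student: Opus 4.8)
The plan is to establish the two implications of the theorem separately, relying throughout on the correspondence between holomorphic structures and Chern connections, together with the flag characterization of Theorem \ref{flagcharacter}. Recall the setup: we have a holomorphic reduction $\pi:X\to E(G/P)$ and an antidominant character $\chi$, which together produce the section $g_{\pi,\chi}\in\Omega^0(E\times_{\mathrm{Ad}}i\mathfrak{k})$ whose fibrewise almost-constant eigenvalues $\lambda_1<\dots<\lambda_r$ cut $\mathbb{V}$ into a filtration $0\subset\mathbb{V}^1\subset\dots\subset\mathbb{V}^r=\mathbb{V}$. The content of the theorem is that \emph{holomorphicity of the reduction} is equivalent (in the appropriate direction, with a converse producing a reduction from the eigenbundle data) to \emph{holomorphicity of the filtration}.

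\emph{Forward direction.} Assuming $\pi$ is holomorphic, I would argue that each $\mathbb{V}^k$ is a holomorphic subbundle, i.e.\ that $\bar\partial_A$ preserves $\Omega^0(\mathbb{V}^k)$. The key point is that the holomorphic reduction to $P$ equips $E$ with a holomorphic $P$-structure, so the associated bundle $E\times_\rho V$ inherits the $P$-invariant filtration of the fibre $V$ as a filtration by \emph{holomorphic} subbundles; this is because $P=P(\chi)$ is exactly the stabilizer of the flag $0\neq V^1\subsetneq\dots\subsetneq V^r=V$ in $V$ by Theorem \ref{flagcharacter}, so each $V^k$ is a $P$-submodule and hence defines a holomorphic subbundle of $\mathbb{V}$. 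Concretely, reducing the structure group to $P$ means the transition functions take values in $P$ and are holomorphic; since these functions preserve the flag in $V$, the subbundles $\mathbb{V}^k$ are patched by holomorphic transition functions and are therefore holomorphic. Equivalently, in terms of $\bar\partial_A$, a local holomorphic frame adapted to the $P$-reduction is block-upper-triangular with respect to the flag, forcing $\bar\partial_A\mathbb{V}^k\subset\mathbb{V}^k\otimes\Omega^{0,1}$.

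\emph{Converse direction.} Now suppose we are given $g\in\Omega^0(E\times_{\mathrm{Ad}}i\mathfrak{k})$ with \emph{constant} eigenvalues $\lambda_1<\dots<\lambda_r$; I would first note that $\rho(g)$ is a fibrewise Hermitian endomorphism of $\mathbb{V}$, so its eigenbundles $\mathbb{V}(\lambda_i)$ are globally defined smooth subbundles and $\mathbb{V}=\bigoplus_i\mathbb{V}(\lambda_i)$ as a smooth splitting. Setting $\mathbb{V}^k=\bigoplus_{i\leq k}\mathbb{V}(\lambda_i)$, the hypothesis that this filtration be holomorphic means precisely that $\bar\partial_A$ is block upper-triangular in the induced smooth decomposition. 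I would then read off the antidominant character $\chi$ as the element of $i\mathfrak{k}$ whose $\rho$-image has the eigenvalues $\lambda_i$ on $V$ — this uses Theorem \ref{flagcharacter} to guarantee $\chi$ is antidominant for $P(\chi)$. The reduction $\pi$ is then recovered as the section of $E(G/P)$ determined fibrewise by the flag: at each point the flag in the fibre of $\mathbb{V}$ determines a point of $G/P$, hence a reduction, and holomorphicity of $\pi$ follows from the holomorphicity of the filtration by invoking the converse statement of Theorem \ref{flagcharacter} fibrewise together with the integrability of $\bar\partial_A$.

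\emph{Main obstacle.} I expect the delicate part to be the converse: translating the condition ``the filtration is holomorphic'' into ``the reduction $\pi$ is holomorphic'' rigorously, since $\pi$ takes values in the nonlinear space $E(G/P)$ and one must verify that the block upper-triangularity of $\bar\partial_A$ is exactly the integrability condition for $\pi$ as a holomorphic section. The subtlety is that a smooth reduction to $P$ always exists (giving the filtration smoothly), but holomorphicity of $\pi$ is a strictly stronger, first-order condition; one must check that the off-diagonal-block vanishing of $\bar\partial_A$ precisely matches the $\bar\partial$-closedness of $\pi$ in a local trivialization of $G/P$. I would handle this by choosing a local section of $P\to G$ and writing $\pi$ in terms of the flag-adapted frame, reducing the statement to the elementary linear-algebra fact that a connection preserves a filtration if and only if its $(0,1)$-part induces the zero map on successive quotients in the off-diagonal direction, which is exactly the holomorphicity of the corresponding reduction. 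The complication of the codimension-two locus where the filtration may degenerate is handled by Hartogs-type extension, as is standard in this setting and already implicit in Mundet's treatment \cite{mundet}.
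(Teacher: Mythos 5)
First, a point of comparison: the paper itself offers no proof of this statement --- it is quoted, together with the flag-stabilizer theorem preceding it, directly from Mundet i Riera (``Again, from \cite{mundet}, we have\dots''). So your proposal can only be measured against the argument in that source, and there it matches in essence: the forward direction by associated-bundle functoriality (the holomorphic $P$-reduction $E_\pi$ together with the $P$-invariant flag $V^1\subsetneq\dots\subsetneq V^r$ exhibits each $\mathbb{V}^k=E_\pi\times_P V^k$ as a holomorphic subbundle), and the converse by using the flag-stabilizer theorem to identify $G/P$ with the $G$-orbit of the standard flag inside the flag variety of $V$, so that holomorphicity of a section of $E(G/P)$ becomes equivalent to holomorphicity of the induced filtration. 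Your outline is sound and is the standard argument.

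Two remarks. First, you silently corrected the statement, and rightly so: as literally written (``an element $g\in\Omega^0(E\times_{\mathrm{Ad}}i\mathfrak{k})$ with constant eigenvalues''), the converse is false, since a generic smooth section with constant eigenvalues produces only a smooth reduction; one must add, as you do, the hypothesis that the filtration induced by $g$ is holomorphic. This is also how the statement is actually used in section \ref{hk}, where the Uhlenbeck--Yau argument first shows that the filtration defined by $u_\infty$ is holomorphic, and only then is the converse invoked. Second, your recipe ``read off $\chi$ as the element of $i\mathfrak{k}$ whose $\rho$-image has the eigenvalues $\lambda_i$'' glosses a genuine point in the stated generality: for an arbitrary compact $K$ the multiset of eigenvalues of $\rho(x)$ does not determine the adjoint orbit of $x$ (this fails already for a torus), so to know that $g$ is fibrewise $K$-conjugate to a single $\chi$ you need constancy of the eigenvalues together with connectedness of $X$ and the fact that, $\rho$ being faithful, only finitely many orbits share a given $\rho$-spectrum. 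For $K=\U(n)$, the case governing section \ref{plain}, this is immediate, which is presumably why you passed over it; in the generality in which the theorem is stated it needs saying. Neither remark undermines your proof strategy.
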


\subsection{Stability}

The second ingredient for the Hitchin-Kobayashi corrrespondence is the total degree. This is an numerical parameter that in the K\"ahler setting plays the role analogous to that of the Hilbert-Mumford criterion in GIT. In fact, we shall see that the Hitchin-Kobayashi correspondence characterizes the level sets of the moment map (which are given by the gauge equations) by the behaviour of this parameter.

Let $K$ be a compact Lie group, $G$ its complexification, and $E$ a principal $K$-bundle, and suppose that  $K$ acts by symplectomorphisms on a K\"ahler manifold $F$, and that a moment map $\mu$ for this action exists. For any $x\in F$, $k\in \mathfrak{k}$, and $t\in \mathbb{R}$ let
\begin{equation*}
\lambda_t (x, k)=\langle \mu (\exp (itk)x), k\rangle
\end{equation*}
where $\langle \cdot, \cdot \rangle$ is the canonical pairing of $\mathfrak{k}$ with its dual. Then, the \emph{maximal weight} of the action of $k$ on $x$ is
\begin{equation*}
\lambda (x,k)=\lim_{t\to \infty} \lambda_t (x,k)
\end{equation*}
In \cite{mundet}, it is explicitly shown how this numerical parameter gives a linear criterion for the stability of a point in the K\"ahler manifold.

Now, for a given pair $(\pi, \chi)$ of a reduction to a parabolic subgroup $P\subset G$ plus an antidominant character of $P$, as above, define
\begin{equation*}
\deg (\pi, \chi)=\lambda_r \deg (\mathbb{V}^r)+ \sum_{k=1}^{r-1} (\lambda_k - \lambda_{k+1})\deg (\mathbb{V}^k)
\end{equation*}
where $\mathbb{V}^k$ are the bundles in the induced filtration, and $\deg (\mathbb{V})$ is the degree of the vector bundle $\mathbb{V}$. This is an infinite-dimensional version of the maximal weight defined above, if we consider the Atiyah-Bott symplectic form on the space of connections on a principal $K$ bundle $E$.

Since our structure group is actually a product of groups, and our connections split accordingly, nothing demands that we take the Atiyah-Bott forms for the total bundle directly. In fact, just above we took a weighted sum of the Atiyah-Bott forms on the vertices. A reduction $\pi$ induces a reduction $\pi_i$ on each of the vertices, and an anti-dominant character obviously splits $\chi=\oplus \chi_i$, like the Lie algebra. But then, the maximal weight changes accordingly, and in fact we should consider instead
\begin{equation*}
\deg_a (\pi, \chi)= \sum a_i \deg (\pi_i, \chi_i)
\end{equation*}
Here, the positive numbers $a_i$ are the parameters for the moment map, as above. In other words, for a weighted sum of moment maps, we must take a weighted sum of degrees. We'll call this the `$a$-degree.'

Finally, given a section $\phi \in \Omega^0 (E(F))$ of the associated bundle with fibre $F$, and a central element $c\in \mathfrak{k}$, the total $c$-degree of $(\sigma, \chi)$ is defined as
\begin{equation*}
T_\Phi^c(\pi, \chi)=\deg_{a} (\pi, \chi) + \int_X \lambda (\phi(x), -ig_{\pi, \chi}(x))+\langle i\chi, c\rangle \mathrm{Vol} (X)
\end{equation*}
Here, $g_{\pi, \chi}\in \Omega^0 (E\times_\mathrm{Ad}i\mathfrak{k})$ is the fibrewise dual of $\chi$. The total degree is allowed to be $\infty$.

We need to compute all these parameters in the specific case of quiver representations. Since for this case the K\"ahler fibre is actually a vector space, $g_{\pi, \chi}(x)$ can be seen as an endomorphism of $\mathrm{Rep}(Q,V)$, and it makes sense to speak of its eigenvalues. Let $\mathcal{F}^- (\chi) \subset E(F)$ be the subset of vectors where $g_{\pi, \chi}$ acts negatively, i.e., vectors in the direct sum of all the negative eigenspaces of $g_{\pi, \chi}$. Since $g_{\pi, \chi}$ has constant eigenvalues, and $\pi$ is holomorphic, $\mathcal{F}^- (\chi)$ is a holomorphic subbundle. Then, one computes that
\begin{equation}\label{sectionweight}
\lambda (\phi(x), g_{\pi, \chi}(x))=\left\{
\begin{array}{ll}
0 & \mathrm{if} \quad \phi(x) \in \mathcal{F}^- (\chi)\\
\infty & \mathrm{if } \quad \phi(x) \notin \mathcal{F}^- (\chi)
\end{array}
\right.
\end{equation}

This result can be intuitively understood if we just look at the action of a group of matrices on a vector space. Then, a one-parameter subroup corresponds to repeated application of the same automorphism to a vector. The action on the subgroup on an eigenspace is then $t^\lambda$, where $\lambda$ is an eigenvalue of the generator. As long as all $\lambda$ are negative, this tends to zero; a positive one forces a divergence to infinity.

On the other hand, $c=\oplus i\tau_i \mathrm{id}_i$, and $\chi=\oplus \chi_i$ for $\chi_i \in \mathfrak{gl}_i$, so
\begin{equation*}
\langle i\chi, c\rangle = -\sum \tau_i \langle \chi_i , \mathrm{id}_i \rangle = -\sum \tau_i \tr \chi_i
\end{equation*}

From here on out, we assume that the volume of $X$ has been normalized to one. What we've seen so far justifies the following definition:

\begin{cst}
A pair $(A,\phi) \in \mathscr{A}^{1,1}\times \Omega^0 (\mathfrak{Rep})$ is $(a_i, \tau_i)$-stable if for any $(\pi, \chi)$ with $\phi(X)\subset \mathcal{F}^- (\chi)$ we have
\begin{equation*}
\sum \left( a_i \deg (\pi, \chi_i) - \tau_i \tr \chi_i \right)>0 
\end{equation*}
\end{cst}

Given a subrepresentation $\mathbb{V}'\subset \mathbb{V}$, consider the one-term flag $0\subset \mathbb{V}'\subset \mathbb{V}$. Since we're dealing with a general linear group, the subgroup $P$ fixing this flag is a parabolic subgroup, and there is an anti-dominant character $\chi$ of $P$ inducing that flag. For a particular choice of $\chi$, we have $\deg (\sigma_i, \chi_i)=\deg (\mathbb{V}_i) - \deg (\mathbb{V}'_i)$, and $\tr \chi_i=\rk \mathbb{V}_i - \rk \mathbb{V}'_i$, and we get the more familiar definition in terms of slopes. In this case, then, the total degree can be interpreted as the maximal weight for the moment maps we constructed. In other words, the slope condition for subrepresentations is a necessary condition for stability. In general, however, parabolic subgroups stabilize more complicated flags, and although our definition looks complicated, it generalizes to other reductive groups. Another important point is that we can restric only to subrepresentation when working over a Riemann surface, for only over a surface can we keep inside the category of vector bundles over $X$. From this point of view, what is surprising is that our definition even works. The essential ingredient is the remarkable theorem of Uhlenbeck and Yau \cite{uy}, which assures us that the subsheaves in the filtration define vector subbundles over a submanifold of codimension at least two. This is ultimately reliant on a Hartog-like extension theorem, and the interested reader can look up Popovici's nicely geometric proof of this fact in \cite{popovici}.

Essentially, we can't always interpret the total degree as a maximal weight because we're working in the wrong category: if we recall that every coherent subsheaf defined outside of a codimension two sumbanifold has a unique extension as a coherent subsheaf, we see that for a general $\chi$, we get a filtration of $\mathbb{V}$ by coherent subsheaves, and not just subbundles (in fact, we're reversing the whole story, since what one can prove is that the filtration is already by coherent subsheaves.) We have the following:

\begin{morphismsfactor}
In the conditions of the definition of stability above, the morphisms restrict to each element in the filtration, i.e., if $\phi \in \mathcal{F}^-(X)$, then $\phi (\mathbb{V}_{k,\ta} \otimes M_\alpha)\subset \mathbb{V}_{k, \ha}$ for all $k$ in the filtration.
\end{morphismsfactor}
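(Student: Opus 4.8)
The plan is to reduce the assertion to a fibrewise, essentially linear-algebraic computation of the way the endomorphism $g_{\pi,\chi}$ acts on the representation space $\mathrm{Rep}(Q,V)$, and then to read the filtration condition directly off the eigencomponents that the hypothesis $\phi(X)\subset\mathcal{F}^-(\chi)$ is forced to annihilate. Recall from the characterization of holomorphic filtrations above that $g_{\pi,\chi}$ has constant real eigenvalues $\lambda_1<\dots<\lambda_r$ and, acting through $\rho$ on each vertex bundle, splits it into holomorphic eigenbundles $\mathbb{V}_{\ta}(\lambda_j)$, with $\mathbb{V}_{k,\ta}=\bigoplus_{j\leq k}\mathbb{V}_{\ta}(\lambda_j)$. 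Since $\chi=\bigoplus_i\chi_i$ respects the vertex decomposition and the action on the twisting bundles $M_\alpha$ is trivial, the whole situation is compatible with the splitting of the total space, so it suffices to treat each arrow $\alpha$ on its own.

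First I would compute the induced action of $g_{\pi,\chi}$ on a homomorphism $\psi\in\Hom(V_{\ta}\otimes M_\alpha,V_{\ha})$. Differentiating the representation $\rho$ recorded in the preliminaries, and using that the $M_\alpha$-factor is fixed, gives
\[
g_{\pi,\chi}\cdot\psi=\chi_{\ha}\circ\psi-\psi\circ(\chi_{\ta}\otimes\id_{M_\alpha}).
\]
Decomposing $\phi_\alpha$ according to the eigenbundle splittings of its source and target, $\phi_\alpha=\sum_{j,l}\phi_\alpha^{jl}$ with $\phi_\alpha^{jl}\colon\mathbb{V}_{\ta}(\lambda_j)\otimes M_\alpha\to\mathbb{V}_{\ha}(\lambda_l)$, the displayed formula shows at once that each $\phi_\alpha^{jl}$ is an eigenvector of $g_{\pi,\chi}$ of weight $\lambda_l-\lambda_j$ (up to the overall sign coming from the identification $i\mathfrak{k}\simeq\mathfrak{k}$ used to realize $\chi$ as an endomorphism).

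The conclusion is then immediate. By the section-weight formula (\ref{sectionweight}), the hypothesis $\phi(X)\subset\mathcal{F}^-(\chi)$ means that $\phi$ carries no component on which $g_{\pi,\chi}$ acts with positive weight, so that $\phi_\alpha^{jl}=0$ whenever $\lambda_l>\lambda_j$. The surviving components therefore send each eigenbundle $\mathbb{V}_{\ta}(\lambda_j)$ into eigenbundles of index $l\leq j$, and hence carry $\mathbb{V}_{k,\ta}\otimes M_\alpha=\bigoplus_{j\leq k}\mathbb{V}_{\ta}(\lambda_j)\otimes M_\alpha$ into $\bigoplus_{l\leq k}\mathbb{V}_{\ha}(\lambda_l)=\mathbb{V}_{k,\ha}$ for every $k$, which is the assertion. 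I expect the only genuinely delicate point to be the bookkeeping of the sign and of the weak-versus-strict inequality: one must check that the convention defining $\mathcal{F}^-(\chi)$ and the ordering $\lambda_1<\dots<\lambda_r$ are aligned so that ``$g_{\pi,\chi}$ does not act with positive weight'' corresponds to ``does not raise the filtration index.'' Note that even under the strict reading (vanishing of all non-negative-weight components) the inclusion still holds a fortiori, since then $\phi$ maps $\mathbb{V}_{k,\ta}$ into $\mathbb{V}_{k-1,\ha}\subset\mathbb{V}_{k,\ha}$; thus the forward implication is robust, and only the clean equivalence requires pinning the convention down exactly.
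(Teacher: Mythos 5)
Your proof is correct and takes essentially the same route as the paper's: the paper likewise reduces to the fibrewise adjoint action of $\chi$ on $\Hom(V_{\ta}\otimes M_\alpha, V_{\ha})$, assuming $\phi_\alpha$ is an eigenvector of weight $\lambda<0$ and computing $\chi(\phi_\alpha(x))=\phi_\alpha(\chi(x))+\lambda\phi_\alpha(x)$, which is exactly your observation that the component $\phi_\alpha^{jl}$ has weight $\lambda_l-\lambda_j$ and that the absence of positive weights prevents raising the filtration index. Your explicit eigencomponent decomposition and the remark on the strict versus weak reading of $\mathcal{F}^-(\chi)$ are simply a more careful rendering of the same argument.
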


\begin{proof}
Given any morhpism $\phi_\alpha$ in the representation; we can consider it as an element of $\mathrm{Rep}(Q, V)$ by taking all the other components to be zero. Since $\phi (X)\in \mathcal{F}^-(\chi)$, we may assume that $\phi_\alpha$ is an eigenvector of $\chi$ with eigenvalue $\lambda<0$, i.e., $[ \chi, \alpha ]=\lambda \alpha$.

Now, the character $\chi$ also acts on the total space $V$, so let $x\in V_{\ta}$ be an eigenvector with eigenvalue $\lambda_x$. We have
\begin{equation*}
\chi (\alpha (x))=\alpha (\chi (x)) + \lambda \alpha (x)= (\lambda_x+\lambda)\alpha (x)
\end{equation*}
Since $\lambda<0$, $\lambda_x + \lambda<\lambda_x$, as desired. The general case follows from here.
\end{proof}

The following definition, to be found in \cite{ag}, should be well-motivated by the previous observations:

\begin{quiversheaf}
A quiver sheaf representation $(\mathcal{E}, \phi)$ is a collection of coherent sheaves $\mathcal{E}_i$, one for each vertex of the quiver, together with a collection of sheaf morphisms $\phi_\alpha : \mathcal{E}_{\ta}\otimes \mathcal{M}_\alpha\to \mathcal{E}_{\ha}$, one for each arrow in the quiver (the $\mathcal{M}_\alpha$ are the twisting sheaves.)
\end{quiversheaf}

 Recall that the degree of a coherent sheaf is defined as
\begin{equation*}
\deg (\mathcal{E})=\frac{1}{(n-1)!}\frac{2\pi}{\mathrm{Vol}X} \langle c_1 (\mathcal{E})\cup [\omega^{n-1}], [X] \rangle 
\end{equation*}
where $c_1(\mathcal{E})$ is the first Chern class of $\mathcal{E}$, $[\omega^{n-1}]$ is the class of the K\"ahler form of $X$, and $[X]$ its fundamental class. Given a quiver sheaf, for $\sigma$ and $\tau$ be collections of real numbers $\sigma_i$, $\tau_i$ with $\sigma_i >0$, we define the degree and slope of the representation respectively to be
\begin{equation*}
\deg_{a, \tau}(\mathcal{E})=\sum (a_i\deg (\mathcal{E}_i - \tau_i \rk (\mathcal{E}_i))
\qquad \mu_{a, \tau} (\mathcal{E})=\frac{\deg_{a, \tau}(\mathcal{E})}{\sum a_i \rk (\mathcal{E}_i)}
\end{equation*}

Note that our situation fits into this framework since we can always take the sheaves of sections of our vector bundles. This inclusion in fact respects the stability of the representation in the following sense: 

\begin{simplifiedplainstability}
A representation $(E, \phi)$ is stable (as a bundle representation) if and only if $\mu_{a, \tau}(\mathcal{F})<\mu_{a, \tau}(E)$ for every proper sub-sheaf representation $0\neq \mathcal{F} \subset E$.
\end{simplifiedplainstability}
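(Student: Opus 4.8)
The plan is to unwind the $(a_i,\tau_i)$-stability of the stability definition into the language of subsheaf representations and match it term by term with the slope inequality. The guiding principle is that a reduction-with-character $(\pi,\chi)$ satisfying $\phi(X)\subset\mathcal{F}^-(\chi)$ is the same datum as a \emph{weighted filtration of $(E,\phi)$ by subsheaf representations}, and that the total degree is a non-negative combination of the slope defects of the steps. The very first move is to discard the section term: by the computation of the maximal weight on sections, whenever $\phi(X)\subset\mathcal{F}^-(\chi)$ the integral $\int_X\lambda(\phi(x),-ig_{\pi,\chi}(x))$ vanishes, so the total $c$-degree collapses to $T:=\sum_i(a_i\deg(\pi_i,\chi_i)-\tau_i\tr\chi_i)$, and stability is exactly the assertion $T>0$ for every such $(\pi,\chi)$.

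Next I would install the dictionary between reductions and filtrations. The flag-characterization theorem together with the holomorphic-filtration theorem identifies an antidominant $\chi$ (defined off a codimension-two set) with a filtration $0\subset\mathbb{V}^1\subset\cdots\subset\mathbb{V}^r=\mathbb{V}$ carrying increasing real weights $\lambda_1<\cdots<\lambda_r$, while the Proposition that morphisms restrict to the filtration shows the hypothesis $\phi(X)\subset\mathcal{F}^-(\chi)$ to be equivalent to each $\mathbb{V}^k$ being a sub\emph{representation}. Here I must take care with the passage between sheaves and bundles: by Uhlenbeck--Yau a coherent subsheaf representation agrees with a holomorphic subbundle representation away from a codimension-two analytic set, and since the degree is computed there, saturating a subsheaf representation $\mathcal{F}$ shows that the $\mathcal{F}$ of the Proposition and the $\mathbb{V}^k$ of filtrations are freely interchangeable without altering any degree.

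The key computation is then purely numerical. Reading off the eigenvalue of $\chi_i$ on each graded piece and applying Abel summation gives $\sum_i a_i\deg(\pi_i,\chi_i)=\lambda_r\deg_{a}(\mathbb{V})+\sum_{k<r}(\lambda_k-\lambda_{k+1})\sum_i a_i\deg\mathbb{V}^k_i$, with the identical identity holding for $\sum_i\tr\chi_i$ after replacing degrees by ranks. Writing $R_{\mathcal{E}}:=\sum_i a_i\rk\mathcal{E}_i$ and $\delta_k:=\mu_{a,\tau}(E)R_{\mathbb{V}^k}-\deg_{a,\tau}(\mathbb{V}^k)$, subtracting and regrouping produces the clean identity $T=\mu_{a,\tau}(E)\sum_i a_i\tr\chi_i+\sum_{k=1}^{r-1}(\lambda_{k+1}-\lambda_k)\,\delta_k$. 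Since $\lambda_{k+1}-\lambda_k>0$, and since $\delta_k>0$ is \emph{precisely} the slope inequality $\mu_{a,\tau}(\mathbb{V}^k)<\mu_{a,\tau}(E)$, the second sum is positive exactly under the slope hypothesis, and its being a positive combination of single-step contributions is what lets one pass between arbitrary filtrations and individual subobjects.

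I expect the genuine obstacle to be the first term, $\mu_{a,\tau}(E)\sum_i a_i\tr\chi_i$. The central part of $\chi$ may be shifted freely by $t\bigoplus\id_i$ without leaving $\mathcal{F}^-(\chi)$, without changing the proper parabolic it induces, and without altering any $\delta_k$; under such a shift $T$ changes by $t\deg_{a,\tau}(E)$, so requiring $T>0$ for all $t$ forces $\deg_{a,\tau}(E)=0$. This is no accident: it is exactly the normalization obtained by tracing the gauge equation $i a_i\Lambda F_i+[\phi,\phi^*]=\tau_i\id_i$ and integrating over $X$, since $\sum_i\tr_i[\phi,\phi^*]=0$ pointwise. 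Once $\mu_{a,\tau}(E)=0$ the first term vanishes identically and one is left with $T=\sum_k(\lambda_{k+1}-\lambda_k)\delta_k$; the equivalence ``$(E,\phi)$ is stable $\iff$ $\delta_k>0$ for every step $\iff$ $\mu_{a,\tau}(\mathcal{F})<\mu_{a,\tau}(E)$ for every proper subsheaf representation $0\neq\mathcal{F}\subset E$'' then follows in both directions, the reduction from general filtrations to single subobjects being guaranteed by the positivity of the coefficients $\lambda_{k+1}-\lambda_k$.
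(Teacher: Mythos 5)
The core of your argument is the same as the paper's own (very terse) proof: translate a pair $(\pi,\chi)$ with $\phi(X)\subset\mathcal{F}^-(\chi)$ into a weighted filtration by subsheaf subrepresentations (the proposition that morphisms restrict to the filtration, plus Uhlenbeck--Yau to move between saturated subsheaves and subbundles off codimension two), then Abel-sum so that the total degree becomes a combination of single-step slope defects with positive coefficients $\lambda_{k+1}-\lambda_k$; this is exactly the reduction ``from a general filtration to a filtration by one subsheaf'' that the paper invokes, and your identity $T=\mu_{a,\tau}(E)\sum_i a_i\tr\chi_i+\sum_{k}(\lambda_{k+1}-\lambda_k)\delta_k$ is correct. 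Your analysis of the central term goes beyond the paper's presentation and is sound as far as it goes: a central shift $\chi\mapsto\chi+t\bigoplus\id_i$ preserves antidominance, the parabolic, the filtration and $\mathcal{F}^-(\chi)$, so stability in the sense of the $(a_i,\tau_i)$-stability definition does force $\deg_{a,\tau}(E)=0$, consistent with the hypothesis $\deg_{\sigma,\tau}(\mathcal{E})=0$ in the theorem the paper quotes from \cite{ag}.

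The one genuine gap is your closing claim that the equivalence ``then follows in both directions.'' Your own computation shows it cannot, as stated: stability implies $\deg_{a,\tau}(E)=0$, but slope stability does not (it constrains proper subsheaves only), so a slope-stable representation with $\deg_{a,\tau}(E)\neq 0$ is a counterexample to ``slope stable $\Rightarrow$ stable'' --- take any admissible character and shift it centrally by $t$ of sign opposite to $\deg_{a,\tau}(E)$ with $|t|$ large, producing $T<0$. Relatedly, even when $\deg_{a,\tau}(E)=0$, a purely central $\chi$ (trivial filtration) yields $T=0$, never $T>0$, so the stability definition must in addition be read as quantifying only over pairs whose induced filtration is proper, the standard convention in \cite{mundet} and \cite{ggm}. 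What your argument actually proves is: with degenerate characters excluded and under the standing normalization $\deg_{a,\tau}(E)=0$, $(E,\phi)$ is stable if and only if it is slope stable. That normalization must be carried as a hypothesis of the proposition (it is precisely the condition under which the gauge equations can admit solutions, as you note by tracing them); it is not a consequence of the slope condition, so it cannot be silently absorbed into the biconditional.
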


The main idea of the proof is that for our group, one can reduce the case of a general filtration to the filtration by one subsheaf (see, e.g., \cite{ggm} for the special case of Higgs bundles.)

\subsection{The correspondence}

We need a technical, but important definition:
\begin{simple}
A pair $(A, \phi)$ is infinitesimally simple if no non-central semisimple element in the Lie algebra of the complex gauge group fixes $(A, \phi)$.
\end{simple}

In our case, this condition is equivalent to the following: suppose that $E\times_\rho \mathrm{Rep}(Q,V)$ splits non-trivially as a direct sum, and that there is a reduction of the structure group of $E$ to $G'\subset \GL(\underline{i})$ compatible with the splitting. Then $(A,\phi)$ is infinitesimally simple if there is no element in the Lie algebra of $G'$ which both annihilates the section $\phi$ \emph{and} acts non-trivially on $E\times_\rho \mathrm{Rep}(Q,V)$. For details, cf. \cite{banfield} and \cite{mundet}.

The Hitchin-Kobayshi correspondence of section \ref{hk} gives the following:

\begin{hksimplequiverreps}
Let $(A, \phi)$ be an infinitesimally simple pair. Then, $(A, \phi)$ is $(a_i, \tau_i)$-stable if and only if it is gauge equivalent to a pair satisfying the gauge equations with parameters $(a_i, \tau_i)$.
\end{hksimplequiverreps}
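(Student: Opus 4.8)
The plan is to recognize this statement as a direct instance of the abstract K\"ahler Hitchin-Kobayashi correspondence proved in section \ref{hk}, and to verify that the data assembled in the preceding subsections supply precisely the hypotheses that the abstract theorem requires. By the proposition identifying unitary quiver bundles with pairs, such a bundle with reduction is the same as a point $(A,\phi)\in\mathscr{A}^{1,1}\times\mathscr{S}$ acted on by the unitary gauge group $\mathscr{G}$, with complexification $\mathscr{G}^\mathbb{C}$; and $\mathscr{A}^{1,1}\times\mathscr{S}$ is a K\"ahler fibration over $X$ whose fibre is the finite-dimensional representation space $\mathrm{Rep}(Q,V)$. The moment map for the $\mathscr{G}$-action is the one built in the gauge-equations subsection, and its level set $\mu=c$ with $c=\oplus\, i\tau_i\,\mathrm{id}_i$ is exactly the system $ia_i\Lambda F_i+[\phi,\phi^*]=\tau_i\,\mathrm{id}_i$. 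Thus ``gauge equivalent to a pair satisfying the gauge equations'' means precisely ``the $\mathscr{G}^\mathbb{C}$-orbit of $(A,\phi)$ meets $\mu^{-1}(c)$,'' which is the solvability side of the abstract theorem.

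The first genuine step is to match the two notions of stability. The abstract correspondence measures stability by strict positivity of the total $c$-degree $T^c_\Phi(\pi,\chi)$ over all reductions $\pi$ to parabolic subgroups and all antidominant characters $\chi$. The decisive simplification is equation (\ref{sectionweight}): because the fibre is a vector space, the maximal weight of the section is $0$ when $\phi(X)\subset\mathcal{F}^-(\chi)$ and $+\infty$ otherwise. Hence requiring $T^c_\Phi(\pi,\chi)>0$ for every $(\pi,\chi)$ collapses to requiring $\sum\left(a_i\deg(\pi,\chi_i)-\tau_i\tr\chi_i\right)>0$ for exactly those $(\pi,\chi)$ with $\phi(X)\subset\mathcal{F}^-(\chi)$, which is verbatim the definition of $(a_i,\tau_i)$-stability given above. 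So our stability is abstract stability, taken with respect to the weighted Atiyah-Bott forms (encoding the $a_i$) and the central parameter $c$ (encoding the $\tau_i$).

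With every ingredient matched, I would invoke the theorem of section \ref{hk}. The easy direction --- a solution of $\mu=c$ makes each total degree nonnegative --- yields stability, and it is exactly here that infinitesimal simplicity is used to upgrade the inequality to a \emph{strict} one, excluding the polystable-but-not-stable pairs whose orbits meet $\mu^{-1}(c)$ without being strictly stable. The converse --- that strict stability forces the complex gauge orbit to meet the level set --- is the analytic core, and it is the part that is genuinely deferred.

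I expect the only real obstacle to lie in that analytic converse carried out in section \ref{hk}: one must establish the coercivity of the associated Donaldson-type functional together with the estimates needed to extract a minimizer solving $\mu=c$, all in a setting where the filtration induced by $\chi$ is only a filtration by coherent subsheaves defined off a codimension-two subset, so that the Uhlenbeck-Yau extension theorem (invoked after the stability definition) is required to make sense of the degrees. Within the scope of \emph{this} theorem, however, the substantive work is bookkeeping: confirming that the weighted symplectic structure, the $-i$-normalization of the fibre moment map, and the vanishing of the moment map along the twisting directions are mutually compatible, so that the abstract hypotheses hold verbatim and the correspondence applies without further modification.
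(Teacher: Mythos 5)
Your proposal is correct and takes essentially the same route as the paper: the paper's entire ``proof'' is the observation that the abstract correspondence of section \ref{hk} applies verbatim once the preceding subsections have matched the ingredients --- the weighted Atiyah-Bott forms encoding the $a_i$, the central element $c=\oplus\, i\tau_i\,\mathrm{id}_i$ encoding the $\tau_i$, the vanishing moment map on the twisting factor, and the collapse of the total degree to the definition of $(a_i,\tau_i)$-stability via equation (\ref{sectionweight}) --- which is exactly your reduction. One minor imprecision: infinitesimal simplicity is not used only in the ``solution implies stability'' direction; in the deferred analytic argument it is also what makes the linearized operator $L$ injective (hence, being Fredholm of index zero, surjective) in the minimization step, though this detail lives inside the abstract theorem and does not affect the validity of your reduction.
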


This correspondence has been entirely put in terms of pairs. In terms of representations of $Q$, one would say that for an infinitesimally simple representation (which here means that its only automorphisms are multiples of the identity,) there is a hermitian metric with Chern connection satisfying the gauge equations if and only if it is $(a_i, \tau_i)$-stable. This is the more traditional point of view for this type of correspondence, by taking the gauge equations to be a condition on the metric structure. Our point of view, however, is better adapted to uncover the geometric significance of the correspondence in the K\"ahler case.

\subsection{General polystability}
We make a detour here to discuss polystability of quiver bundles. None of our remarks here extend to our later sections, so we only include them for completeness. Also for this reason, our remarks necessarily consist of generalities grazing only the surface of this topic. The main point is that we can still get an explicit description of representations satisfying the gauge equations even if we don't restrict attention to infinitesimally simple pairs. Such a description is essential for the study of the moduli space of representations, since the moduli of stable pairs is generally not compact.

As it happens, the category of plain quiver sheaves associated with a given quiver forms an abelian category. As is well known (e.g., in the case of vector bundles,) the existence of the abelian structure significantly simplifies the description of polystability. First of all, the study of polystability implies reductions to Levi subgroups of the parabolic groups in question. We've seen above that the parabolic groups correspond naturally to certain filtrations of the representation; the reduction to a Levi subgroup, in the context of an abelian category, corresponds to taking the associated graded object. The second fundamental fact is the existence of a \emph{Jordan-H\"older filtration}, which implies that every representation can lead to such a graded object. This latter filtration is a filtration of the form
\begin{equation*}
0=F_0\subset F_1\subset ... \subset F_k=F
\end{equation*}
where each consecutive quotient $F_i/F_{i-1}$ is stable. For a polystable (more generally, semistable) object, one can show that any two such filtrations have the same length, and yield the same graded object (though in general the filtrations themselves are \emph{not} isomorphic.) A very elucidating example is the finite dimensional case, cf. \cite{king}. The outcome of such considerations is that a representation is polystable if it is a direct sum of stable representations.

To state the correspondence as it appears in the literature, recall the definitions of $(\sigma, \tau)$ degree and slope. The definition and theorem that follow come straight from \cite{ag}.

\begin{psreps}
Let $(\mathcal{E}, \phi)$ be a quiver sheaf. Then, $\mathcal{E}$ is stable (resp. semistable) if for all proper quivers subsheaves $\mathcal{F}$ we have $ \mu_{\sigma, \tau} (\mathcal{F}) <  \mu_{\sigma, \tau} (\mathcal{E})$ (resp., $ \mu_{\sigma, \tau} (\mathcal{F}) \leq \mu_{\sigma, \tau} (\mathcal{E})$); $\mathcal{E}$ is polystable if it is a direct sum of polystable quivers sheaves, all of them with the same slope.
\end{psreps}

\begin{hkpsquivers}
Let $(\mathcal{E}, \phi)$ be a holomorphic twisted quiver bundle with $\deg_{\sigma, \tau} (\mathcal{E})=0$. Then, $\mathcal{E}$ is $(\sigma, \tau)$-polystable if and only if it admits a hermitian metric satisfying the $(\sigma, \tau)$-gauge equations. This hermitian metric is unique up to multiplication by a constant for each summand in the decomposition as a direct sum of stable subrepresentations.
\end{hkpsquivers}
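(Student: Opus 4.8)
The plan is to bootstrap everything from the infinitesimally simple case: the Hitchin--Kobayashi correspondence for simple pairs stated above already handles stable representations, and I would promote it to polystable ones by exploiting the abelian structure of the category of quiver sheaves together with the Jordan--H\"older filtration, both recalled above. Throughout, the normalization $\deg_{\sigma,\tau}(\mathcal{E})=0$ is what fixes the central parameter so that ``slope zero'' and ``solves the gauge equations'' correspond.

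For the implication polystable $\Rightarrow$ solution, I would start from the definition and write $\mathcal{E}=\bigoplus_j \mathcal{E}_j$ as a direct sum of stable quiver subbundles, all of the same slope. Since $\deg_{\sigma,\tau}(\mathcal{E})=0$, that common slope is forced to be $\mu_{\sigma,\tau}(\mathcal{E}_j)=0$ for every $j$. Each $\mathcal{E}_j$ is stable, hence infinitesimally simple (its automorphisms are only the scalars), so the correspondence for simple pairs supplies a hermitian metric $h_j$ on $\mathcal{E}_j$ whose Chern connection together with $\phi$ solves the $(\sigma,\tau)$-gauge equations; the vanishing of the slope is exactly what makes the central parameter $\tau$ agree across the summands. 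The orthogonal direct sum metric $h=\bigoplus_j h_j$ then renders both $\Lambda F$ and $[\phi,\phi^*]$ block-diagonal, so each block solves its own equation and hence $h$ solves the gauge equations on $\mathcal{E}$.

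For the converse, solution $\Rightarrow$ polystable, I would run the standard Chern--Weil argument. Given a metric solving the gauge equations and any quiver subsheaf $0\neq\mathcal{F}\subset\mathcal{E}$, the Uhlenbeck--Yau regularity \cite{uy} lets me treat $\mathcal{F}$ as a subbundle away from a codimension-two subset and project orthogonally onto it; pairing the gauge equations with this projection and integrating over $X$ yields $\deg_{\sigma,\tau}(\mathcal{F})\le 0$, the nonnegative contribution of the second fundamental form supplying the sign, and hence semistability. The delicate point---and the one I expect to be the main obstacle---is the equality case: when $\mu_{\sigma,\tau}(\mathcal{F})=0$ the same integrated identity must be shown to force the second fundamental form to vanish identically, so that $\mathcal{F}$ is simultaneously a holomorphic and a metric direct summand whose orthogonal complement is again a slope-zero quiver subbundle carrying a solution. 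Extracting this vanishing cleanly is precisely where the subsheaf-versus-subbundle subtlety genuinely enters. Granting it, iterating along a Jordan--H\"older filtration splits $\mathcal{E}$ orthogonally into stable subrepresentations of slope zero, which is exactly polystability.

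For the uniqueness statement, I would compare two solutions $h$ and $h'$ inducing the same holomorphic quiver bundle. They differ by a complex gauge transformation $g$ that is positive and self-adjoint with respect to $h$; inserting it into the gauge equations produces an elliptic equation for $g$ whose solutions, by the convexity underlying the correspondence, are covariantly constant. Such a $g$ is an automorphism of the polystable representation, and since the automorphism group of a direct sum of stable factors is the product of the general linear groups of the multiplicity spaces, $g$ must act as a single positive scalar on each stable isotypic block. This is exactly the asserted freedom of one constant per stable summand.
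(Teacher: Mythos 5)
Your proposal is correct and takes essentially the same approach as the paper: the paper's entire proof is the remark that the theorem follows from the simple-pair correspondence once one knows that stable pairs are (infinitesimally) simple and that polystable objects are direct sums of stable summands of equal slope, with all further details deferred to \cite{ag}. Your forward direction is exactly this reduction, and your Chern--Weil converse and convexity-based uniqueness argument are precisely the standard details being deferred.
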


This theorem is easily proven using our theorem above after one shows that any stable pair is infinitesimally simple (in fact simple.) We won't prove this here, however, since this is not relevant to our discussion below.

\section{Generalized Quivers}\label{generalized}

Derksen and Weyman \cite{derksen} introduced a concept of generalized quiver for a reductive algebraic group $G$ which we'll need for our generalization of quiver bundles. In this section, we review their definition, and introduce twistings for generalized quivers, which frequently appear in applications. Then, we introduce the notion of twisted generalized quiver bundles, and briefly discuss their Hitchin-Kobayashi correspondences.

\subsection{Generalized quivers} 
Let $G$ be a complex reductive Lie group, $\mathfrak{g}$ its Lie agebra. The definitions are as follows:
\begin{generalizedquiver}
\begin{enumerate}
\item A \emph{generalized $G$-quiver $\tilde{Q}$ with dimension vector} is a triple $(H,R,\mathrm{Rep}(\tilde{Q}))$ where $H$ is a closed abelian reductive subgroup of $G$, $R$ its centralizer in $G$, and $\mathrm{Rep}(\tilde{Q})$ a finite-dimensional representation of $R$ (the \emph{representation space}.) We require the irreducible factors of the representation also to be irreducible factors of $\mathfrak{g}$ as an $\mathrm{Ad} R$-module, and the trivial representation to not occur.

\item A representation of $\tilde{Q}$ is a vector $\phi \in \mathrm{Rep}(\tilde{Q},V)$.
\end{enumerate}
\end{generalizedquiver}

The motivation for this definition comes from considering the case $V=\bigoplus V_i$, $G:=\GL (V)$, and $H=\left\{ \prod \lambda_i \mathrm{id}_i | \lambda_i \in \mathbb{C}^* \right\}$. Then, the centralizer of $H$ is $R=\GL (\underline{i})$, and under the adjoint action of $R$, we have a decomposition $\mathfrak{g}=\bigoplus \Hom (V_i,V_j)$. Therefore, a choice of a representation space consists of picking a bunch of the $\Hom (V_i,V_j)$ with possible repetitions. We recover a classical quiver by drawing one vertex $i$ for each space $V_i$, and one arrow $\alpha : i\to j$ for each time the piece $\Hom (V_i,V_j)$ appears in the representation space -- this also justifies our notation $\mathrm{Rep}(\tilde{Q})$. Finally, an element $\phi \in \mathrm{Rep}(\tilde{Q})$ is precisely a representation of the classical quiver constructed in this way. In the next section, we shall see, following Derksen-Weyman, that there is an analogously concrete way of understanding $\Or (V)$- or $\Sp (V)$-generalized quivers.

Our definition is slightly different from Derksen-Weyman's. First of all, they define a representation to be the orbit $R\cdot \phi$, rather than the vector $\phi$ itself. In our context, this is clearly unsatisfactory. Also, they consider the case of algebraic groups, and correspondingly, $H$ is required to be Zarisky closed. We could have let $G$ be an arbitrary (possibly real) reductive Lie group, and indeed some important examples would be encompassed by such a definition (cf. \ref{examples},) but we'll throughout restrict to complex reductive groups as in the definition (so that, in practice, we've not strayed from Derksen-Weyman's definition, at least when the group is connected.)

Twistings are important when we come to quiver bundles, so we also need to generalize this notion. The orthogonal case in the next section will provide a very concrete motivation for our definition (see Remark \ref{remarktwisting} below.) Write $\mathrm{Rep}(\tilde{Q})=\oplus Z_\alpha$, where each $Z_\alpha$ is an irreducible subrepresentation (note that the factors might repeat with different labels.)

\begin{twistedgenquiver}
A twisting for a generalized $G$-quiver $\tilde{Q}=(H,R,\mathrm{Rep}(\tilde{Q},V))$ is a choice of a vector space $M_\alpha$ for each irreducible summand $Z_\alpha$. A finite-dimensional twisted representation of $\tilde{Q}$ is a element $\phi \in \mathrm{Rep}(\tilde{Q}, M):=\bigoplus Z_\alpha \otimes M_\alpha$.
\end{twistedgenquiver}

As in section \ref{plain}, $M_\alpha$ will for us be the fibre space of a vector bundle $\mathbb{M}_\alpha$ over $X$; we'll again denote by $\mathbb{M}$ the total twisting space, and by $F$ the respective principal bundle. We call $\mathrm{Rep}(\tilde{Q}, M)$ the space of $M$-twisted representations of $\tilde{Q}$.

\subsection{Generalized quiver bundles}
We take our clue from our study of quiver representations. When we considered classical quiver representations, we made an explicit distinction between the ``quiver-ly'' aspect (the fibre space), and the ``bundly'' part (the principal bundle.) It is only natural to make the following definition:

\begin{generalizedbundle}
Let $\tilde{Q}=(H,R,\mathrm{Rep}(\tilde{Q}))$ be a $G$-generalized quiver with twisting $\mathbb{M}_\alpha$, and $X$ a compact K\"ahler manifold. A twisted $\tilde{Q}$-bundle representation over $X$ is a pair $(E^\mathbb{C},\phi)$ of a principal $R$-bundle $E$ over $X$, and section $\phi \in \Omega^0 ((E^\mathbb{C}\times F^\mathbb{C})\times_\rho \mathrm{Rep}(\tilde{Q},M))$.
\end{generalizedbundle}

As usual, $F^\mathbb{C}$, the frame bundle of the total twisting space $\mathbb{M}$, is a $\GL (\underline{\alpha})=\prod \GL_\alpha$ bundle. A direct comparison with classical quivers should hint that our Hitchin-Kobayashi correspondence can be put to good use here as well. In fact, the whole analysis is technically the same as for the classical representations, except we must take a slightly more abstract point of view. The first concern is a reduction to a maximal compact subgroup of $R$, i.e., a `unitary representation': a representation together with a reduction of the structure group to a maximal compact $K$ of $R$. On the other hand, from the fact that the irreducible summands of our representation of $R$ are (complex) subspaces of the Lie algebra $\mathfrak{g}$, a moment map for the action can easily be constructed using the methods of section 6 of King \cite{king}. From here, we see that a Hitchin-Kobayashi correspondence exists for generalized quiver bundles, even without the correspondence we prove below. In fact, Mundet's theorem \cite{mundet} applies directly. However, as we shall see in the next sections, the structure group of a generalized quiver of interest (i.e., the maximal compact in the unitary representation) typically splits into a product. This means that we can introduce extra stability parameters in the correspondence, using the methods in section \ref{hk}.

As an aside, the splitting of the structure group is associated with the corresponding splitting in the reductive abelian group $H$. In fact, the identity component of $H$ is a torus $H_0=(\mathbb{C}^*)^r$, and $H$ splits as $H=(\mathbb{C}^*)^r \times F$, where $F$ is a finite group. It is an interesting question to determine under which conditions this splitting extends to the centralizer $R$, but alas it is a question we cannot answer. We'll content ourselves in the next sections to show how this works in important particular cases.

\section{Symmetric Quivers}\label{orthreps}
This section has a dual purpose. First of all, starting from a classical point of view, we want to consider a natural setting for the concept of orthogonal and symplectic symmetries of representations, the symmetric quivers. The orthogonal and symplectic structures are simple enough that we can deal with them directly, and, in fact, we prove a Hitchin-Kobayashi correspondence for orthogonal and symplectic by reference to the plain case. Now, from the generalized point of view, it makes sense to speak not of symmetric quivers, but of generalized quivers for orthogonal or symplectic groups. Our second goal with this section is to show that this instance of generalized quivers coincides with the more geometric concept of symmetric quivers. Derksen-Weyman \cite{derksen} established this result in finite dimensions, and after reviewing it, we prove its gauge-theoretic version. At the end of the section, we prove the infinite-dimensional one for quiver bundles. We remind the reader of our standing convention of mentioning only `orthogonal' when we indifferently mean orthogonal or symplectic. We have tried to write the proofs in a way that formally applies to both cases, switching only the meaning of symbols in a standard way (mostly, the transposes are taken with respect to quadratic forms of different type.)

\subsection{Finite-dimensional representations}
Since Derksen-Weyman's result in \cite{derksen} will be instrumental later on, we carefully review it now. In fact, for us it will be important to understand explicitly some isomorphisms that Derksen-Weyman take as implicit, so we'll actually go through their proof carefully -- but we want to note that the proof is entirely theirs. Let us first recall the definition of symmetric quiver.
\begin{symquiver}
\begin{enumerate}
\item A symmetric quiver $(Q, \sigma)$ is a quiver $Q$ equipped with an involution $\sigma$ on the sets of vertices and arrows such that $\sigma t(\alpha)=h \sigma (\alpha)$, and vice-versa, and that if $\ta=\sigma \ha$, then $\alpha= \sigma (\alpha)$.

\item An orthogonal, resp. symplectic, representation $(V, C)$ is a representation $V$ of $Q$ that comes with a non-degenerate symmetric, resp. anti-symmetric, quadratic form $C$ on its total space $V_\Sigma=\bigoplus_{i \in Q_0} V_i$ which is zero on $V_i \times V_j$ if $j\neq \sigma (i)$, and such that
\begin{equation}\label{alternating}
C(\phi_\alpha v, w)+C(v, \phi_{\sigma (\alpha)} w)=0
\end{equation}
\end{enumerate}
\end{symquiver}

Note that a dimension vector for an orthogonal representation must have $n_i=n_{\sigma (i)}$; we say that such a dimension vector is `compatible.' The theorem is the following:

\begin{derksenweyman}[Derksen-Weyman]
Let $G=\Or (n,\mathbb{C})$ (resp. $\Sp (n, \mathbb{C})$.) Then, to every generalized $G$-quiver $\tilde{Q}$ with dimension vector we can associate a symmetric quiver $Q$ in such a way that the generalized quiver representations of $\tilde{Q}$ correspond bijectively to orthogonal (resp. symplectic) representations of $Q$. Conversely, every symmetric quiver with dimension vector determines a generalized $\Or (n,\mathbb{C})$-generalized quiver.
\end{derksenweyman}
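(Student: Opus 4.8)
The plan is to read off the symmetric quiver directly from the $H$-module structure of the standard representation $V$ of $G=\Or(V)$ (resp.\ $\Sp(V)$), and to recognize the defining relation (\ref{alternating}) as exactly the condition $\xi^{T}=-\xi$ that cuts $\mathfrak g$ out of $\End(V)$, where $T$ is the transpose with respect to $C$. First I would decompose $V=\bigoplus_{\chi}V_\chi$ into $H$-weight spaces, which is legitimate since $H$ is abelian and reductive. Because $H\subset\Or(V)$ preserves $C$, invariance forces $C(V_\chi,V_\psi)=0$ unless $\psi=\chi^{-1}$, so $C$ restricts to a nondegenerate pairing $V_\chi\times V_{\chi^{-1}}\to\mathbb C$; in particular $\dim V_\chi=\dim V_{\chi^{-1}}$. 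I take the occurring characters as the vertex set, set the involution to be $\sigma(\chi)=\chi^{-1}$, and record $\dim V_\chi$ as the (automatically compatible) dimension vector. At a fixed vertex, where $\chi^{2}=1$, the form $C$ restricts to a nondegenerate form on $V_\chi$ of the same type as $C$.

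Next I would compute the centralizer. An element of $G$ commutes with $H$ iff it preserves each weight space, and preserving $C$ then forces its blocks on a dual pair $\{V_\chi,V_{\chi^{-1}}\}$ to be inverse-transpose of one another; hence
\[
R=\prod_{\{\chi,\chi^{-1}\},\,\chi\neq\chi^{-1}}\GL(V_\chi)\times\prod_{\chi^{2}=1}\Or(V_\chi),
\]
one $\GL$ per two-element $\sigma$-orbit and one $\Or(V_\chi)$ (resp.\ $\Sp(V_\chi)$) per fixed vertex. I then decompose $\mathfrak g=\{\xi\in\End(V):\xi^{T}=-\xi\}$ as an $\mathrm{Ad}\,R$-module. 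The key computation is that $T$ carries the block $\Hom(V_\chi,V_\psi)$ isomorphically onto $\Hom(V_{\sigma\psi},V_{\sigma\chi})$, so $\mathfrak g$ is the sum of the graphs of $-T$ over mirror pairs of blocks, each graph projecting isomorphically onto $\Hom(V_\chi,V_\psi)$. I would match such a summand with an arrow $\alpha:\chi\to\psi$ and its mirror $\sa:\sigma\psi\to\sigma\chi$; a self-paired block, which occurs precisely when $\chi=\sigma\psi$, yields a $\sigma$-fixed arrow, reproducing the axiom ``$\ta=\sigma\ha\Rightarrow\alpha=\sa$''. Making these transpose identifications explicit---rather than implicit as in \cite{derksen}---is the bookkeeping I most want to pin down.

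With this dictionary the correspondence is almost formal. Writing $\mathrm{Rep}(\tilde Q)=\bigoplus Z_\alpha\subset\mathfrak g$, I take $Q$ to be the quiver on the vertex set above whose arrows are the chosen summands, equipped with $\sigma$. An element $\phi\in\mathrm{Rep}(\tilde Q)$ is then a point of $\mathfrak g$, so its block $\phi_\alpha\in\Hom(V_{\ta},V_{\ha})$ and the mirror block are tied by $\phi_{\sa}=-\phi_\alpha^{T}$, which unwinds to $C(\phi_\alpha v,w)+C(v,\phi_{\sa}w)=0$, i.e.\ (\ref{alternating}). Thus $\phi$ is the same datum as an orthogonal (resp.\ symplectic) representation of $(Q,\sigma)$ on $(V,C)$, and the assignment is a linear bijection, since choosing the free block of each mirror pair determines its partner. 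For the converse I run the construction backwards: from a symmetric quiver with compatible dimension vector I build $V=\bigoplus V_i$, a form $C$ pairing $V_i$ with $V_{\sigma i}$ (of the prescribed type on fixed vertices), set $G=\Or(V)$ (resp.\ $\Sp(V)$) and $H=\{\prod\lambda_i\,\id_i:\lambda_{\sigma i}=\lambda_i^{-1}\}$, a closed abelian reductive subgroup of $G$ whose weight spaces recover the $V_i$; its centralizer is the $R$ above, and $\mathrm{Rep}(\tilde Q)$ is the sum of the $\mathfrak g$-summands indexed by $\sigma$-orbits of arrows.

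The main obstacle I anticipate is not a single estimate but the careful bookkeeping of the $\mathrm{Ad}\,R$-decomposition of $\mathfrak g$: checking irreducibility of each block-graph, counting mirror pairs once, and---most delicately---the appearance of \emph{trivial} summands. The centre of each $\GL(V_\chi)$ contributes to $\mathfrak g$ the element acting as $+\id$ on $V_\chi$ and $-\id$ on $V_{\sigma\chi}$, which the definition of generalized quiver excludes; in particular a loop at a non-fixed vertex contributes only its trace-free part $\mathfrak{sl}(V_\chi)$, a point that must be reconciled with the naive ``one block, one arrow'' picture. I also expect to spend care at the fixed vertices, where $\Hom(V_\chi,V_{\sigma\chi})$ splits under $T$ into its symmetric and antisymmetric parts, so that the self-dual arrow there carries a form-valued datum; handling this uniformly for both the orthogonal and the symplectic case, with only the meaning of $T$ changing, is what makes the argument delicate.
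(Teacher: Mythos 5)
Your proposal is correct and takes essentially the same route as the paper's own proof: the $H$-weight decomposition of the standard representation, the $C$-forced pairing of $V_\chi$ with $V_{\chi^{-1}}$, the computation $R\simeq\prod\GL(V_\chi)\times\prod\Or(W_i)$, the decomposition of $\mathfrak{g}$ into mirror pairs of blocks glued by $-T$ (which the paper phrases instead through the identification $\mathfrak{g}\simeq\Lambda^2 V$ and an explicit arrow-drawing table), and the same reverse construction taking $H$ to be the centre of $\Or(\underline{i})$. The one subtlety you flag --- the trivial summands coming from the centres of the $\GL(V_\chi)$ factors, so that a loop at a non-fixed vertex contributes only $\mathfrak{sl}(V_\chi)$ --- is a genuine issue, and the paper in fact glosses over it by asserting that $E_i=\mathrm{Lie}\,R_i$ is irreducible as an $\mathrm{Ad}\,R$-module (it is not, precisely because of that central line), so your bookkeeping is, if anything, more careful than the paper's.
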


\begin{proof}
Following our convention, we prove only the orthogonal case: the symplectic one is no different. Let $W$ be the standard representation of $G$ (also known as $\mathbb{C}^n$,) and let $C(\cdot,\cdot)$ be the induced symmetric non-degenerate quadratic form on $W$. Then, under the action of $H$, $W$ decomposes into the direct sum
\begin{equation*}
W=\bigoplus W_{\chi_i}
\end{equation*}
where $W_{\chi_i}$ is the isotypic component of the character $\chi_i$ of $H$ (we consider only those characters for which this component is non-empty, so the sum is in fact finite.) The presence of the quadratic form imposes restrictions on this decomposition. In particular, if $v\in W_\chi$ and $w\in W_\mu$, then for all $h\in H$,
\begin{equation*}
C(v,w)=C(h\cdot v, h\cdot w)=\chi(h)\mu(h)C(v,w)
\end{equation*}
Therefore, the restriction of the quadratic form to $W_\chi \times W_\mu$ must be zero if $\chi \mu$ is not trivial. Since the form is non-degenerate, it also follows that for any $\chi$ in the decomposition, $\chi^{-1}$ must also appear, and the restriction to $W_\chi \times W_{\chi^{-1}}$ is non-degenerate. We relabel the character in the decomposition so that $\mu_1, ... , \mu_l$ are the characters with $\mu_i^2=1$, and $\chi_1, ... , \chi_r$ are the maximal number of characters that are not their own inverses or of each other. Also, we denote $V_i=W_{\chi_i}$, and $W_i=W_{\mu_i}$; we've seen that $V_i^*=V_i^t=W_{\chi_1^{-1}}$ where the first equality is the isomorphism we obtain from the quadratic form being non-degenerate. We've just proved that the decomposition of $W$ must be of the form
\begin{equation*}
W=\left( \bigoplus_{i=1}^{i=r} V_i \right) \oplus \left( \bigoplus_{i=1}^{i=r} V_i^* \right) \oplus \left( \bigoplus_{i=1}^{i=l} W_i \right)
\end{equation*}

The centralizer $R$ of $H$ is precisely the group of all orthogonal endomorphisms of $W$ preserving such a decomposition, i.e.,
\begin{equation*}
R=\left( \prod_{i=1}^r R_i \right) \times \left( \prod_{i=1}^l \Or (W_i) \right)
\end{equation*}
To describe $R_i$, note that we've just proved that the quadratic form restricts to a symmetric quadratic form on $V_i\times V_i^*$. Then, $R_i \subset \Or (V_i\times V_i^*)$ is the subgroup respecting the decomposition: its elements are in fact determined by the transformation at $V_i$, since the transformation at $V_i^*$ must be dual to it. Hence, we have an isomorphism
\begin{equation}\label{groupiso}
R=\left( \prod_{i=1}^r \GL (V_i) \right) \times \left( \prod_{i=1}^l \Or (W_i) \right)
\end{equation}

Now, for a vector space $V$, the adjoint representation of $O (V)$ can be identified with $\Lambda^2 (V)$. In the particular case of $V_i\times V_i^*$, the adjoint representation $\Lambda^2 (V_i\times V_i^*)$ of $\Or (V_i\times V_i^*)$, under the action of $R_i$ splits into irreducible summands
\begin{equation*}
\Lambda^2(V_i\times V_i^*)=\Lambda^2 (V_i) \oplus E_i \oplus \Lambda^2 (V_i^*)
\end{equation*}
Here, $\Lambda^2 (V_i)$ is to be seen as the subspace of $\Hom (V_i^*, V_i)\subset \mathfrak{gl}(V_i\times V_i^*)$ which is alternating with respect to the quadratic form $C$, i.e., condition (\ref{alternating}) is satisfied for all $\phi \in \Lambda^2 (V_i)$; analogously $\Lambda^2(V_i^*)\subset \Hom (V_i, V_i^*)$. The summand $E_i\subset \End (V_i)\oplus \End (V_i^*)$ is the subspace satisfying the same alternating condition, which amounts to a pair $(\phi, \psi)$ such that $\psi=-\phi^t$. Again, we have an isomorphism $E_i=\End (V_i)$ induced from the isomorphism for $R_i$ in (\ref{groupiso}) (indeed $E_i=\mathrm{Lie}R_i$, which also shows that this piece is in fact irreducible.) This splitting can easily be checked by writing matrices in two-by-two blocks, and putting $C$ in a standard form.

When we extend the analysis to other summands of $W$, we see the same kind of coupling we found for $E_i$, where the irreducible pieces are subspaces of sums of $\Hom$ spaces. We conclude that under the action of $R$, the adjoint representation of $\Or (n, \mathbb{C})$ splits into factors of the form
\begin{center}
$\Lambda^2(V_i)$, $\Lambda^2(V_i^*)$, $E_i$, $\Lambda^2(W_i)$, \\
$V_{ij}=\Hom (V_i, V_j)$, $V_{i\bar{j}}=\Hom (V_i, V_j^*)$, $V_{\bar{i}j}=\Hom (V_i^*, V_j)$, $W_{ij}=\Hom (W_i,W_j)$\\
$VW_{ij}=\Hom (V_i, W_j)$, $VW_{\bar{i}j}=\Hom (V_i^*, W_j)$
\end{center}
In the second and third line, $i$ and $j$ are to be taken as different. The equalities are isomorphism that we get just as for $E_i$. For example, $V_{ij}\subset \Hom (V_i, V_j)\oplus \Hom (V_i^*, V_j^*)$ is the subspace of $(\phi, \psi)$ such that $\psi=-\phi^t$.

Finally, we construct the quiver. We draw one vertex for each summand in the decomposition of $W$. We will label the ones corresponding to $V_i$ as $q_i$, the ones corresponding to $V_i^*$ as $q_i^*$, and to $W_i$ as $p_i$. For the arrows we must look into the representation $\mathrm{Rep}(\tilde{Q})$ that comes with the generalized quiver. Write $\mathrm{Rep}(\tilde{Q},V)=\bigoplus Z_\alpha$ with $Z_\alpha$ irreducible. Since we've assumed that the trivial representation does not occur in the representation, we may assume that $Z_\alpha$ is not trivial, so that it must be isomorphic to one of the pieces above. To draw the arrows, we go through these pieces one by one as follows
\begin{equation*}
\begin{array}{ll}
\textrm{If $Z_\alpha=\Lambda^2(V_i)$} & \textrm{draw an arrow $g_\alpha=g_\alpha^*: q_i^*\to q_i$} \\
\textrm{If $Z_\alpha=\Lambda^2(V_i^*)$} & \textrm{draw an arrow $g_\alpha=g_\alpha^*:q_i\to q_i^*$}\\
\textrm{If $Z_\alpha=E_i$} & \textrm{draw arrows $g_\alpha=q_i\to q_i$ and $g_\alpha^*:q_i^*\to q_i^*$}\\
\textrm{If $Z_\alpha=\Lambda^2(W_i)$} & \textrm{draw an arrow $g_\alpha=g_\alpha^*:p_i\to p_i$}\\
\textrm{If $Z_\alpha=V_{ij}$} & \textrm{draw arrows $g_\alpha:q_i\to q_j$ and $g_\alpha^*: q_j^*\to q_i^*$}\\
\textrm{If $Z_\alpha=V_{\bar{i}j}$} & \textrm{draw arrows $g_\alpha:q_i^* \to q_j$ and $g_\alpha^*: q_j^*\to q_i$}\\
\textrm{If $Z_\alpha=V_{i\bar{j}}$} & \textrm{draw arrows $g_\alpha:q_i\to q_j^*$ and $g_\alpha^*: q_j\to q_i^*$}\\
\textrm{If $Z_\alpha=W_{ij}$} & \textrm{draw arrows $g_\alpha:p_i\to p_j$ and $g_\alpha^*: p_j\to p_i$}\\
\textrm{If $Z_\alpha=VW_{ij}$} & \textrm{draw arrows $g_\alpha:q_i\to p_j$ and $g_\alpha^*: p_j\to q_i^*$}\\
\textrm{If $Z_\alpha=VW_{\bar{i}j}$} & \textrm{draw arrows $g_\alpha:q_i^*\to p_j$ and $g_\alpha^*: p_j\to q_i$}
\end{array}
\end{equation*}

We now have to define the involution. This is easy with the notation above: switch starred and unstarred elements of the same kind and label: $\sigma (q_i)=q_i^*$, $\sigma (p_i)=p_i$, and $\sigma (g_\alpha)=g_\alpha^*$. It is an easy exercise to verify that with this involution we have a symmetric quiver.

We have left to show that there is a bijection between representations. But this is obvious from our description of the irreducible summands $Z_\alpha$. A representation of the generalized quiver is a vector $v_\alpha \in Z_\alpha$, which is a space of morphisms, and so it is in fact a representation for the arrows. We've seen that $v_\alpha$ is either a morphism in $\Lambda^2 (V_i)$, $\Lambda^2 (V_i)^*$, $\Lambda^2 (W_i)$, in which case the involution we defined fixes the arrow; or it is an element in the other pieces, and it is in fact a pair of morphisms satisfying the condition for an orthogonal representation.

We construct the inverse correspondence. Let $(Q, \sigma)$ be a symmetric quiver. Let $\underline{n}$ be a compatible dimension vector, $V$ be a total space for a representation, and $C$ a quadratic form satisfying the conditions in the definition of orthgonal representation. Using $C$, we can express $V$ as
\begin{equation*}
V=\bigoplus_{i=1}^r (V_i\oplus V_i^*) \oplus \bigoplus_{i=1}^l W_i
\end{equation*}
where the quadratic form restricts to a non-degenerate form on each factor. Then, the group of automorphisms of the orthogonal representation is
\begin{equation*}
\Or (\underline{i})=\prod_{i=1}^r O(V_i\oplus V_i^*) \times \prod_{i=1}^l O(W_i)
\end{equation*}
Here, on $V_i\oplus V_i^*$, the form restricts to the standard pairing of a space with its dual. The orthgonal automorphisms are then the elements of the form $(g,(g^{-1})^t)$, with $g\in \GL_i$. This is precisely the group $R_i$ above, so $\Or (\underline{i})=R$; note further that in $\Or (V)$, $\Or (\underline{i})$ is the centralizer of its center.

The representation space in the generalized quiver is then the space of representations of the symmetric quiver. One can check that it decomposes into summands that are irreducible summads of $\mathfrak{o}(V)$ as an $\mathrm{Ad}\phantom{.} \Or (\underline{i})$-module, and again that those pieces correspond to the pieces we indentified above for the generalized quiver setting.
\end{proof}

An easy corollary of the proof is the following:

\begin{dwequivariance}
The bijection between representations is equivariant with respect to the action of $R=\Or (\underline{i})$.
\end{dwequivariance}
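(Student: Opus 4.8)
The plan is to exploit the fact that the bijection of the theorem was assembled as a direct sum of explicit linear isomorphisms, one for each irreducible $\mathrm{Ad}\,R$-summand $Z_\alpha$ of $\mathfrak{g}$, and that every such isomorphism was \emph{induced from} the group isomorphism (\ref{groupiso}). Concretely, a generalized representation is a vector $\phi=(\phi_\alpha)\in\bigoplus_\alpha Z_\alpha\subset\mathfrak{g}$, on which $R$ acts by the restriction of the adjoint action; the corresponding orthogonal representation of $Q$ is a tuple of morphisms, on which $R=\Or(\underline{i})$ acts in the usual quiver fashion, namely $g\cdot(\psi_\beta)=(g_{h(\beta)}\psi_\beta g_{t(\beta)}^{-1})$. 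Since the bijection respects the decomposition into summands, and both group actions do too, it suffices to check equivariance one summand type at a time.

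First I would fix the block description coming from (\ref{groupiso}): an element $r\in R$ acts on $V=\bigoplus(V_i\oplus V_i^*)\oplus\bigoplus W_k$ as $g_i$ on $V_i$, as $(g_i^t)^{-1}$ on $V_i^*$, and as some $h_k\in\Or(W_k)$ on $W_k$, so that $\mathrm{Ad}(r)$ is simply conjugation by this block-diagonal matrix. For a ``pure Hom'' summand such as $V_{ij}=\Hom(V_i,V_j)$, whose elements are $X\in\mathfrak{g}$ with nonzero blocks $\phi\colon V_i\to V_j$ and $\psi\colon V_j^*\to V_i^*$ subject to $\psi=-\phi^t$, I would compute that conjugation sends the $\Hom(V_i,V_j)$-block to $g_j\phi g_i^{-1}$. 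This is exactly the standard action on the arrow $g_\alpha\colon q_i\to q_j$ that the correspondence assigns to this summand. The remaining off-diagonal types ($V_{\bar{i}j}$, $V_{i\bar{j}}$, $W_{ij}$, $VW_{ij}$, $VW_{\bar{i}j}$) are formally identical, and the self-dual ones ($\Lambda^2 V_i$, $\Lambda^2 V_i^*$, $E_i$, $\Lambda^2 W_k$) differ only in that the single arrow is a loop, where conjugation reproduces the standard action on endomorphisms.

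The one point needing care — and the main bookkeeping obstacle — is the coupling forced by the orthogonal structure, since each off-diagonal summand is identified with a \emph{single} morphism $\phi$ while the actual Lie algebra element is the pair $(\phi,-\phi^t)$. Here I would verify that conjugation transforms the second block $\psi=-\phi^t$ into $(g_i^t)^{-1}\psi\,g_j^t$, and that this equals $-(g_j\phi g_i^{-1})^t$; thus the constraint $\psi=-\phi^t$ is preserved, and the induced action on $\psi=\phi_{\sigma(\alpha)}$ agrees with the standard action on the paired arrow $g_\alpha^*=\sigma(g_\alpha)$. In other words, condition (\ref{alternating}) defining an orthogonal representation is precisely the relation cutting out the adjoint-invariant subspace, so the $\Or(\underline{i})$-action on the pair descends consistently to the free parameter $\phi$. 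Once this compatibility is recorded on one representative summand, equivariance for the whole bijection follows by the direct-sum structure, and the converse direction is automatic since the inverse correspondence reverses exactly these identifications.
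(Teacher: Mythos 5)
Your proof is correct and is exactly the argument the paper intends: the corollary is stated as an immediate consequence of the Derksen--Weyman proof precisely because every identification of an irreducible summand $Z_\alpha$ with a (pair of) $\Hom$-space(s) is induced from the group isomorphism (\ref{groupiso}), so the adjoint action on blocks matches the quiver action $\phi_\alpha \mapsto g_{h(\alpha)}\phi_\alpha g_{t(\alpha)}^{-1}$ term by term. Your block computation, including the check that conjugation preserves the constraint $\psi=-\phi^t$ and reproduces the action on the paired arrow $g_\alpha^*$, is the detailed version of this observation.
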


\subsection{Orthogonal $Q$-bundles}
Suppose now that we consider twistings of the morphisms of the quiver. Given a morphism $\phi_\alpha: V_{\ta}\otimes M_\alpha \to V_{\ha}$, the transpose is a map
\begin{equation*}
\phi_\alpha^t: V_{\ha}^*=V_{\sigma (\ha)}\to V_{\ta}^* \otimes M_\alpha^* = V_{\sigma (\ta)}\otimes M_\alpha^*
\end{equation*}
or, equivalently, $\phi_\alpha^t \in \Hom (V_{t\sigma (\alpha)}\otimes M_\alpha, V_{h\sigma(\alpha)})$. For this to be comparable to $\phi_{\sa}$, then, we must necessarily have $M_\alpha=M_{\sa}$.

In light of this, we make the following definition:
\begin{orthbundle}
\begin{enumerate}
\item A twisted symmetric quiver is a symmetric quiver $(Q, \sigma)$ together with a vector bundle $\mathbb{M}_\alpha=\mathbb{M}_{\sa}$ for each orbit of the involution $\sigma$ on the set of arrows $A$.

\item An orthogonal (resp. symplectic) $Q$-bundle is a twisted $Q$-bundle $(\mathbb{V}, \phi)$  where $\mathbb{M}_\alpha=\mathbb{M}_{\sa}$ for all $\alpha$, together with a quadratic form $g\in \mathcal{A}^0 (S^2 \mathbb{V})$ (resp., $g\in \mathcal{A}^0 (\Lambda^2 \mathbb{V})$) that decomposes as
\begin{equation*}
g=\prod_{\begin{subarray}{c}i=\sigma (j)  \\ i<j \end{subarray}}g_{ij}
\end{equation*}
where $g_{ij}\in \mathbb{V}_i^t \otimes \mathbb{V}_j^t$ is non-degenerate, and such that $g(\phi_\alpha (v\otimes m), w)+g(v,\phi_{\sa}(w\otimes m))$ for all $x\in X$, $v\in \mathbb{V}_{\ta,x}$, $w\in \mathbb{V}_{\ha,x}$, and $m \in \mathbb{M}_{\alpha,x}$.
\end{enumerate}
\end{orthbundle}

As before, we will refer to `orthogonal' when we indifferently mean orthogonal or symplectic, unless otherwise stated. Also, we will frequently omit the involution and the twisting bundles when referring to the quiver.

\begin{remarktwisting}\label{remarktwisting}
Let $(\mathbb{V}, \phi)$ be an orthogonal representation of $Q$. Fix, for a moment, an arrow $\alpha$, let $i=\ta$ and $j=\ha$, and assume for simplicity that $j=\sigma (i)$. From the condition on the twisting we have that $\phi_\alpha \in \Hom (\mathbb{V}_i, \mathbb{V}_j) \otimes \mathbb{M}_\alpha$, and $\phi_{\sa}\in \Hom (\mathbb{V}_j,\mathbb{V}_i)\otimes \mathbb{M}_\alpha$. The condition on the form just requires that $g$ restrict to an orthogonal form on $V_{ij}:=\mathbb{V}_i\times \mathbb{V}_j$, which picks a fibrewise orthogonal group $\Or_{ij}:=\Or (V_{ij})$. On this product, $\phi_\alpha$ and $\phi_{\sa}$ determine fibrewise an element $\overline{\phi}_\alpha : \End(V_{ij})\otimes M_\alpha$, and the requirement on the morphisms is essentially that $\overline{\phi}_\alpha \in \mathfrak{o}_{ij}\otimes M_\alpha$. This provides the promised motivation for our earlier definition of twisted generalized quivers.
\end{remarktwisting}

\begin{orthsubspace}
The space of orthogonal $Q$-bundles can be identified as subspace of the plain representations which is invariant under the action of the gauge group determined by $\Or (\underline{i})$. Further, under this identification, the unitary reductions coincide.
\end{orthsubspace}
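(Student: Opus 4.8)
The plan is to fix once and for all the total space $\mathbb{V}$ together with its quadratic form $g$ --- equivalently, a reduction of the frame bundle $E^\mathbb{C}$ from $\GL(\underline{i})$ to $\Or(\underline{i})$ --- and to realize the orthogonal $Q$-bundles with this underlying data as the subset of $\mathscr{S}=\Omega^0(\mathfrak{Rep}(Q,\mathbb{V}))$ cut out by the compatibility condition in the definition of orthogonal $Q$-bundle above. The first observation is that, with $g$ held fixed, the defining relation
\begin{equation*}
g(\phi_\alpha(v\otimes m),w)+g(v,\phi_{\sa}(w\otimes m))=0
\end{equation*}
is \emph{linear} in $\phi$, so it cuts out a linear subspace $\mathscr{S}^{\Or}\subseteq\mathscr{S}$. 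Fibrewise this is precisely the content of Remark \ref{remarktwisting}: the condition forces the induced endomorphism $\overline{\phi}_\alpha$ to lie in $\mathfrak{o}_{ij}\otimes M_\alpha\subseteq\mathfrak{gl}(V_{ij})\otimes M_\alpha$, so that $\mathscr{S}^{\Or}$ is the space of global sections of a subbundle of $\mathfrak{Rep}(Q,\mathbb{V})$ with this fibre. This is the asserted identification of orthogonal $Q$-bundles with a subspace of the plain representations.

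Next I would verify invariance. The gauge group determined by $\Or(\underline{i})$ consists of those complex gauge transformations $u=(u_i)$ that preserve $g$, that is $g(u_i x,u_{\sigma(i)}y)=g(x,y)$. Substituting $u\cdot\phi$, whose components are $(u\cdot\phi)_\alpha=u_{\ha}\circ\phi_\alpha\circ(u_{\ta}\otimes\id)^{-1}$, into the defining relation and using the invariance of $g$ shows immediately that $u\cdot\phi$ again satisfies the orthogonal condition; hence $\mathscr{S}^{\Or}$ is $\Or(\underline{i})$-invariant. At bottom this is only the bundle incarnation of the $R$-equivariance recorded in the corollary above: the Derksen--Weyman dictionary intertwines the $\Or(\underline{i})$-action on plain morphism data with the action of $R=\Or(\underline{i})$ on generalized-quiver representations, and this equivariance passes to sections over $X$.

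The real work lies in matching the unitary reductions. A unitary reduction of the $\Or(\underline{i})$-bundle is a reduction to a maximal compact $K$ of $\Or(\underline{i})$, whereas a unitary reduction of the underlying plain representation is a reduction to the maximal compact $\U(\underline{i})=\GL(\underline{i})\cap\U(V)$ of $\GL(\underline{i})$. I would realize both at once by choosing a hermitian metric $h$ on the fibre $V$ \emph{adapted} to $C$, in the sense that $h(v,w)=C(v,\tau w)$ for a conjugation $\tau$ of $V$ preserving the decomposition $V=\bigoplus(V_i\oplus V_i^*)\oplus\bigoplus W_i$. For such a choice one has $K=\Or(\underline{i})\cap\U(V)=\Or(\underline{i})\cap\U(\underline{i})$, so that a reduction to $K$ is exactly a hermitian metric that restricts to each vertex, makes distinct vertices orthogonal, and is compatible with $g$ --- on $V_i\oplus V_i^*$ this forces the metric on $V_i^*$ to be the one dual to the metric on $V_i$. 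Under the embedding of the first two steps this metric \emph{is} a unitary reduction of the plain representation, and conversely an adapted plain unitary reduction restricts to a $K$-reduction; hence the two notions of unitary reduction coincide.

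The step I expect to be the main obstacle is exactly this last compatibility between the bilinear $C$ and the sesquilinear $h$: one must exhibit the compact real form of $\Or(\underline{i})$ as its intersection with a suitably adapted $\U(V)$ and check that the reductions to it are precisely the adapted hermitian metrics. By contrast, the linearity of the defining relation and the $\Or(\underline{i})$-invariance are essentially bookkeeping already supplied fibrewise by the finite-dimensional Derksen--Weyman analysis, transported to the bundle setting.
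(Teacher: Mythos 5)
Your proposal is correct and follows the paper's skeleton: fixing $g$ is a reduction of $E^\mathbb{C}$ to $\Or(\underline{i})$, the orthogonal morphisms are the linear subspace of $\Omega^0(\mathfrak{Rep}(Q,\mathbb{V}))$ cut out by $\phi_{\sigma(\alpha)}=-\phi_\alpha^t$ (the paper packages this same condition as an eigenspace of the involution $\phi\mapsto(-\phi_{\sigma(\alpha)}^t)_\alpha$, which is equivalent), and the maximal compact is identified in both cases as $\Or(\underline{i})\cap\U(\underline{i})$. Where you genuinely diverge is the unitary-reduction step. The paper starts from an \emph{arbitrary} hermitian metric on $\mathbb{V}$, restricts its classifying map $E^\mathbb{C}\to\GL(\underline{i})/\U(\underline{i})$ along the inclusion $E_o^\mathbb{C}\hookrightarrow E^\mathbb{C}$, and asserts that the resulting $\Or(\underline{i})$-equivariant map takes values in $\Or(\underline{i})/\Or(\underline{i},\mathbb{R})$. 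You instead demand from the outset that the metric be adapted to the quadratic form, $h(v,w)=C(v,\tau w)$ with $\tau$ a conjugation, and identify reductions to $K=\Or(\underline{i})\cap\U(\underline{i})$ with such adapted metrics. Your version is the more careful one: for a non-adapted metric the fibrewise intersection $\Or(\underline{i})\cap\U(h)$ need not be a maximal compact at all; for instance, on $\mathbb{C}^2$ with $C$ the hyperbolic form and $h$ given by $h_{11}=h_{22}=1$, $h_{12}=h_{21}=c$ for small real $c\neq 0$, the intersection $\Or(C)\cap\U(h)$ has order four, whereas a maximal compact of $\Or(2,\mathbb{C})\cong\mathbb{C}^*\rtimes\mathbb{Z}/2$ is one-dimensional. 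Hence the paper's composed classifying map only lands in $\Or(\underline{i})/\Or(\underline{i},\mathbb{R})$ once the fibrewise compatibility you flag as ``the main obstacle'' is imposed; the adaptedness you make explicit is exactly what hides behind the paper's ``it is easy to see.'' The paper's route is shorter, while yours closes that gap, at the modest price of checking that adapted metrics exist fibrewise and form a single $\Or(\underline{i})$-orbit, so that they are precisely the sections of $E_o^\mathbb{C}(\Or(\underline{i})/\Or(\underline{i},\mathbb{R}))$.
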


\begin{proof}
Let $\mathbb{V}$ be a plain representation of the vertices of $Q$. A choice of a symmetric quadratic form $g$ as in the definition corresponds to a reduction of the structure group of $\mathbb{V}$ from $\GL(\underline{i})$ to a choice of orthogonal group $\Or (\underline{i})$ satisfying some additional conditions; it is easy to see that this is the same group as for the finite dimensional case, so we need not repeat it here. Together with the quadratic form on $\mathbb{M}$, this yields a reduction $\pi$ of the structure group of the principal bundle $E^\mathbb{C} \times F^\mathbb{C}$ (for the definitions of $E^\mathbb{C}$ and $F^\mathbb{C}$, refer to section \ref{plain}.)

Having fixed a reduction of the structure group, the involution $\sigma$ and the quadratic form then furnish us with an involution on the space $\Omega^0 (\mathfrak{Rep}(Q,V))$. Namely, given a map $\phi_\alpha$, we've noted above that its transpose can be identified with a map $\phi_\alpha^t: \mathbb{V}_{t\sa}\otimes \mathbb{M}_\alpha \to \mathbb{V}_{h\sa}$. Then, define a map sending $\phi$ to a section $\sigma (\alpha)$, the $\alpha$th component of which equals $-\phi_{\sa}^t$. This is obviously an involution, which we are abusively still denoting by $\sigma$. Explicitly, if $p_\alpha: \Omega^0 (\mathfrak{Rep}(Q,V)) \to \Omega^0 (\mathfrak{Hom}(\mathbb{V}_{\ta}\otimes \mathbb{M}_\alpha, \mathbb{V}_{\ta}))$ is the canononical projection (sending $p_\alpha (\phi)=\phi_\alpha$,) then, $p_\alpha (\sigma(\phi))=-\phi_{\sa}^t$. It is obvious from the definitions that the space $\mathfrak{Rep}^o (Q,V)$ of orthogonal representations of the arrows can be identified with the $-1$ eigenspace of this involution.

Suppose that we have also a hermitian form on $\mathbb{V}$, or, equivalently, of a reduction $E$ of $E^\mathbb{C}$ to a unitary group as in section \ref{plain}. It is easy to see that this can be combined with the reduction to the orthogonal group to yield a reduction to a real orthogonal group. Recall that these reductions can be seen as equivariant maps $E^\mathbb{C} \to \GL (\underline{i})/\U (\underline{i})$, and $E^\mathbb{C} \to \GL (\underline{i})/\Or (\underline{i})$, respectively. Let $E_o^\mathbb{C}$ be the reduced bundle we get from $\pi$, and $E_o^\mathbb{C}\to E^\mathbb{C}$ the inclusion map. This map is $\Or (\underline{i})$-equivariant, so if we compose with the equivariant map we get from the hermitian metric, we get an equivariant map $E_o^\mathbb{C} \to \Or (\underline{i})/\Or (\underline{i}, \mathbb{R})$, where $\Or (\underline{i}, \mathbb{R})=\Or (\underline{i})\cap \U (\underline{i})$. This is the desired reduction to a maximal compact. 
\end{proof}

\subsection{Relation with generalized orthogonal bundles} We shall now prove that the theory of orthogonal $Q$-bundles, and the theory of generalized orthogonal bundles coincide in such a way that the Hitchin-Kobayashi correspondences match. This turns out to be quite easy from our point of view.

\begin{genortheq}\label{genortheq}
Let $\tilde{Q}$ be a $\mathrm{O}(V)$-generalized quiver, and $Q$ be the corresponding symmetric quiver. Then, there is an equivariant bijection between twisted $\tilde{Q}$-bundles and twisted orthogonal bundle representations of $Q$.
\end{genortheq}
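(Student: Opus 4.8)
The plan is to bootstrap the finite-dimensional Derksen-Weyman theorem to the bundle setting by means of the associated-bundle functor, the key input being the $R$-equivariance of the finite-dimensional bijection recorded in the corollary to that theorem. The entire content of the lemma will then follow from the elementary fact that an $R$-equivariant bijection of fibres induces a bijection of the bundles associated with a fixed principal $R$-bundle, and hence of their spaces of global sections.

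First I would unwind both sides of the asserted bijection into data attached to a principal bundle. On the generalized side, a twisted $\tilde{Q}$-bundle is a pair $(E^\mathbb{C},\phi)$ with $E^\mathbb{C}$ a principal $R$-bundle and $\phi\in\Omega^0((E^\mathbb{C}\times F^\mathbb{C})\times_\rho\mathrm{Rep}(\tilde{Q},M))$. Using the group isomorphism (\ref{groupiso}), $R\cong\left(\prod_i\GL(V_i)\right)\times\left(\prod_i\Or(W_i)\right)$, the bundle $E^\mathbb{C}$ splits as a product of $\GL(V_i)$-bundles $E_i^\mathbb{C}$ and $\Or(W_i)$-bundles. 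Passing to associated vector bundles, the $\GL(V_i)$-factors give bundles $\mathbb{V}_i$ (over the vertices $q_i$) together with their duals $\mathbb{V}_i^*$ (over $q_i^*$), while the $\Or(W_i)$-factors give bundles $\mathbb{W}_i$ (over $p_i$) carrying a fibrewise non-degenerate symmetric form. Assembling the tautological pairings of $\mathbb{V}_i$ with $\mathbb{V}_i^*$ and these orthogonal forms yields a global quadratic form $g\in\mathcal{A}^0(S^2\mathbb{V})$ of exactly the block shape demanded in the definition of an orthogonal $Q$-bundle. This is simply the bundle incarnation of the decomposition $W=\bigoplus(V_i\oplus V_i^*)\oplus\bigoplus W_i$ from the proof of the Derksen-Weyman theorem, and it identifies the $R$-reduction with the reduction of $\GL(\underline{i})$ to $\Or(\underline{i})$ that the preceding lemma singles out as an orthogonal $Q$-bundle structure.

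Next I would treat the morphisms. Writing $\mathrm{Rep}(\tilde{Q},M)=\bigoplus_\alpha Z_\alpha\otimes M_\alpha$ and invoking the fibrewise identification from that same proof, each $Z_\alpha$ is canonically a space of morphisms among the summands of $W$ satisfying the alternating condition (\ref{alternating}); a coupled summand such as $V_{ij}=\Hom(V_i,V_j)$ is identified with a pair $(\phi_\alpha,-\phi_\alpha^t)$, while a self-dual summand such as $\Lambda^2(V_i)$ contributes a single $\sigma$-fixed morphism. Because the twisting $M_\alpha$ is attached to the summand $Z_\alpha$, and the two arrows $g_\alpha,g_\alpha^*$ produced from a coupled summand inherit that same $M_\alpha$, the requirement $\mathbb{M}_\alpha=\mathbb{M}_{\sa}$ holds automatically. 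The $R$-equivariance of this identification is precisely what is needed to apply the functor $(E^\mathbb{C}\times F^\mathbb{C})\times_\rho(-)$: it carries the $R$-equivariant isomorphism of fibres $\mathrm{Rep}(\tilde{Q},M)\cong\mathfrak{Rep}^o(Q,V)$ to an isomorphism of associated bundles, hence a bijection on global sections sending $\phi$ to a tuple $(\phi_\alpha)$ obeying the bundle form of (\ref{alternating}).

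Finally I would record the equivariance of the whole correspondence. The gauge group of $E^\mathbb{C}$, namely the sections of $E^\mathbb{C}\times_{\mathrm{Ad}}R$, is under (\ref{groupiso}) exactly the subgroup of the plain gauge group of $\mathbb{V}$ preserving the form $g$---the gauge group of the orthogonal $Q$-bundle. Since the fibrewise bijection is $R$-equivariant, the induced bijection of sections intertwines the two gauge actions, which is the asserted equivariance; running the Derksen-Weyman construction backwards (from a compatible dimension vector and form $g$ to an $R$-reduction) supplies the inverse. I expect the only real subtlety, rather than a genuine obstacle, to be bookkeeping: one must verify that the assembled $g$ has the non-degeneracy and vanishing pattern of the definition on the nose, and that the fibrewise transpose $\phi_\alpha^t$ taken with respect to $g$ coincides with the global transpose used to define the involution $\sigma$ on $\Omega^0(\mathfrak{Rep}(Q,V))$ in the preceding lemma. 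All the genuine mathematical content lives in the finite-dimensional theorem; the bundle statement is then a formal consequence of the functoriality of the associated-bundle construction applied to an equivariant isomorphism.
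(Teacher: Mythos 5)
Your proposal is correct and follows essentially the same route as the paper: the paper's (very terse) proof likewise takes the $\Or(\underline{i})$-equivariant finite-dimensional bijection $f\colon \mathrm{Rep}(\tilde{Q},M)\to\mathrm{Rep}^o(Q,M,V)$ from Derksen--Weyman, notes that the structure group of orthogonal $Q$-bundles is exactly $R$ (the preceding lemma), and observes that $\mathrm{id}\times f$ descends to the fibered products. Your write-up simply makes explicit the bookkeeping the paper leaves implicit (the splitting of $E^\mathbb{C}$ under (\ref{groupiso}), the twisting condition $\mathbb{M}_\alpha=\mathbb{M}_{\sigma(\alpha)}$, and gauge equivariance), which is a faithful expansion rather than a different argument.
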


\begin{proof}
From Derksen-Weyman's theorem, we have an $\Or (\underline{i})$ equivariant morphism $f: \mathrm{Rep}(\tilde{Q}, M) \to \mathrm{Rep}^o(Q,M,V)$ for some choice of total space $V$. Also, we saw that the structure group of the $Q$-bundles is precisely the group $R$. But then, the product map $\mathrm{id}\times f: (E^\mathbb{C}\times F^\mathbb{C})\times \mathrm{Rep}(\tilde{Q}, M) \to (E^\mathbb{C}\times F^\mathbb{C})\times \mathrm{Rep}(Q, M, V)$ descends to a morphism of the fibered products.
\end{proof}

Set $\mathfrak{Rep}(\tilde{Q},M):=(E\times F)\times_\rho \mathrm{Rep}(\tilde{Q}, M)$ to be the space of morphism representations of the generalized orthogonal quiver. As above, $\mathfrak{Rep}^o(Q,M, V):=(E\times F)\times_\rho \mathrm{Rep}^o(Q, M, V)$ is the space of orthogonal morphism representations of the associated symmetric quiver, and $\mathscr{A}^{1,1}$ is the space of $(1,1)$ connections on $E$.

We endow $\mathscr{A}^{1,1}\times \Omega^0 (\mathfrak{Rep}(\tilde{Q},M))$ with a symplectic form. Since the orthgonal group in question splits as a product, we can directly apply our results of section \ref{hk} to incorporate parameters into this picture. This is simple enough: given the explicit splitting
\begin{equation*}
O(\underline{i})=\left( \prod_{i=1}^r R_i \right) \times \left( \prod_{i=1}^l O(W_i) \right)
\end{equation*}
we choose $r$ positive paremeters $a_1, ..., a_r$, and $l$ positive parameters $b_1, ...,b_l$. Then, these are the parameters we take to weigh the Atiyah-Bott symplectic form $\omega_i$. The moment map on the morphism part is taken as usual.

Now, as a complex subspace of the space of plain representations, $\mathscr{A}^{1,1}\times \Omega^0 (\mathfrak{Rep}^o(Q,M, V))$ is naturally a symplectic space as well. The moment map for the plain case implies choosing a parameter $p_i$ for each vertex.

\begin{symplectomorphism}
Choose parameters such that $p_i+p_{\sigma(1)}=a_i$, if the vertex is not fixed by the involution, and $p_i=b_i$, otherwise. Then, the bijection in lemma \ref{genortheq} is a symplectomorphism between $\mathscr{A}^{1,1}\times \mathfrak{Rep}(\tilde{Q},M)$ and $\mathscr{A}^{1,1}\times \mathfrak{Rep}^o(Q,M, V)$.
\end{symplectomorphism}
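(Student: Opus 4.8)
The plan is to exploit the product structure of the bijection from Lemma \ref{genortheq}. That bijection is induced by $\id \times f$, where $f\colon \mathrm{Rep}(\tilde{Q},M)\to \mathrm{Rep}^o(Q,M,V)$ is the Derksen--Weyman identification; since both sides carry the \emph{same} bundle of $\Or(\underline{i})$-connections $\mathscr{A}^{1,1}$, the induced map is the identity on the connection factor and is $f$ fibrewise on the representation factor. Each of the two symplectic forms splits as an Atiyah--Bott summand on $\mathscr{A}^{1,1}$ plus a fibrewise summand, and the map respects this splitting, so it suffices to verify the two summands separately. I expect the parameter relations in the statement to be exactly what is forced by matching the connection summand, while the fibre summand should match on the nose.

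For the connection part, I would compare the generalized form, which weights the factors $R_i\cong \GL(V_i)$ and $\Or(W_i)$ of $\Or(\underline{i})$ by $a_i$ and $b_i$, against the plain form on the vertices $q_i, q_i^*, p_i$ weighted by $p_{q_i}, p_{q_i^*}, p_{p_i}$ and then restricted to $\Or(\underline{i})$-connections. The key observation is that because $R_i$ sits inside $\Or(V_i\oplus V_i^*)$ as $(g,(g^{-1})^t)$, an $R_i$-connection is the same datum as a $\GL(V_i)$-connection $A_i$ on $q_i$ together with its dual connection on $q_i^*$; an infinitesimal deformation $a$ on $q_i$ corresponds to $-a^t$ on $q_i^*$. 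Using $\tr(a^t\wedge b^t)=\tr(a\wedge b)$, the Atiyah--Bott form on $q_i^*$ evaluated on the dualized deformation equals the form on $q_i$, so the two vertex contributions coincide and sum to $(p_{q_i}+p_{q_i^*})$ times the $\GL(V_i)$ Atiyah--Bott form. Matching this against $a_i$ times that form yields $p_{q_i}+p_{q_i^*}=a_i$, i.e. $p_i+p_{\sigma(i)}=a_i$ at the non-fixed vertices; at a fixed vertex $p_i$ the connection is directly an $\Or(W_i)$-connection, so one simply reads off $p_{p_i}=b_i$.

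For the fibre part, I would check that $f$ is a fibrewise isometry for the induced hermitian metrics, whence it preserves the associated K\"ahler, hence symplectic, forms. The generalized fibre metric is the restriction of the trace form $\tr(\xi^*\eta)$ on $\mathfrak{g}=\mathfrak{o}(V)$ to the summands $Z_\alpha$, whereas the symmetric-quiver fibre metric is $\sum_\alpha \tr(\phi_\alpha^*\psi_\alpha)$ over all arrows, restricted to the orthogonal ($-1$-eigenspace) subspace. Running through the Derksen--Weyman identifications one type at a time settles it: for a two-arrow orbit such as $V_{ij}\ni(\phi,-\phi^t)$, the trace norm in $\mathfrak{gl}(V)$ is $2\|\phi\|^2$, matched on the other side by the two arrows $g_\alpha$, $g_\alpha^*$ of norms $\|\phi\|^2$ each; for a fixed arrow such as $\Lambda^2(V_i)\ni B$, both sides give $\|B\|^2$ through a single arrow. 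Thus the factors of two appear symmetrically and cancel, and $f$ is an isometry, requiring no extra fibre parameter.

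The hard part will be pinning down the duality computation and the normalization of the invariant pairing on $\mathfrak{r}_i$, since it is precisely these constants that decide whether the relation reads $p_i+p_{\sigma(i)}=a_i$ or carries a spurious factor of two; the clean relation emerges only if the $R_i$ form is normalized via the $\GL(V_i)$ trace $\tr_{V_i}$ rather than the ambient $\tr_V$. The remaining, more tedious obstacle is confirming the fibre isometry uniformly across all ten types of summand, where the fixed-arrow and two-arrow cases behave differently yet conspire to agree. Once both matchings are in hand, $f$ preserves each summand of the symplectic form and is therefore the asserted symplectomorphism.
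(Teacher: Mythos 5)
Your proposal is correct and takes essentially the same route as the paper: the paper's entire proof is your connection-part computation, pairing $A\oplus(-A^t)$ on the plain side against $A$ on the generalized side and using transpose-invariance of the trace to force $p_i+p_{\sigma(i)}=a_i$ at the switched vertices (fixed vertices being untouched). Your two additional checks --- the fibrewise isometry of $f$ across the Derksen--Weyman summands, and the normalization of the $R_i\cong\GL(V_i)$ pairing by $\tr_{V_i}$ rather than the ambient trace on $V$ --- are left implicit in the paper, so your write-up is if anything more complete.
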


\begin{proof}
By construction, the map is obviously a diffeomorphism. Hence, we just have to prove that it preserves the symplectic form. Since the isomorphism of groups identifies the Atiyah-Bott symplectic form, it comes down to checking that the parameter was apropriately chosen. Since the isomorphism only changes the factors corresponding to pairs switched by the involution, we only need to check that parameters match in that case. The dual connection is defined by $A^*=-A^t$; then, on the plain representations, we consider the pair of connections $A\oplus (-A^t)$, and on the generalized quiver, simply $A$. We have
\begin{equation*}
\omega (A\oplus A^*, B\oplus B^*)=p_i\omega (A,B)+p_{\sigma(i)}\omega (-A^t,-B^t)=(p_i+p_{\sigma_i} )\omega (A,B)
\end{equation*}
where, by an abuse of notation, $\omega$ is in each case the apropriate Atiyah-Bott form. Since we want the last one to coincide with $a_i \omega$, which is the form on the generalized quiver, we must have $p_i+p_{\sigma_i}=a_i$.
\end{proof}

\subsection{Stability of orthogonal representations}

We now simplify and characterize the stability condition. A central element $c\in \mathfrak{r}$ can be written
\begin{equation*}
c=\bigoplus_{i=1}^r (\tau_i \mathrm{id}_i\oplus (-\tau_i) \mathrm{id}_{\sigma (i)}) \oplus \bigoplus_{i=1}^l \sigma_i \mathrm{id}_i
\end{equation*}
where $\tau_i \in \mathbb{R}$ and $\sigma_i=\pm 1$. Also, $\chi$ has values in $\mathfrak{r}$, each $\chi_i$ is traceless. Further, if $i$ corresponds to a vertex that is not fixed, $\chi_i=(\psi_i, -\psi_i^t)$. Thus,
\begin{equation*}
\langle c,\chi \rangle = \sum_{i=1}^r (\langle \tau_i,\psi_i\rangle+\langle -\tau_i,-\psi^t_i\rangle)+\sum_{i=1}^l \langle \sigma_i, \chi_i\rangle=\sum_{i=1}^r 2\tau_i \tr \psi_i
\end{equation*}

\begin{genorthst}
A representation $(A, \phi)\in \mathscr{A}^{1,1}\times \Omega^0 (\mathfrak{Rep}(\tilde{Q},M)) $ is stable if for any reduction $\pi$ to a parabolic subgroup $P$, and an anti-dominant character $\chi$ of $P$ such that $\phi (X)\subset \mathcal{F}^-(\chi)$ (cf. section \ref{plain},) we have
\begin{align*}
\sum_{i=1}^r \left(a_i \deg (\pi, \chi_i)-2\tau_i \tr \chi_i \right)+\sum_{i=1}^l b_i \deg (\pi, \chi_i) >0
\end{align*}
\end{genorthst}

We want to simplify this stability condition. In analogy to the general linear case, there is a concrete interpretation of parabolic subgroups of an orthogonal group in terms of special flags. A filtration
\begin{equation*}
0=V^0  \varsubsetneq V^1 \varsubsetneq ... \varsubsetneq V^r=V
\end{equation*}
of a quadratic vector space $V$ is said to be isotropic if $V^{r-k}=(V^k)^{\perp}$ for every $0\leq k \leq r$. The subgroup of $\Or (V)$ fixing such a flag is parabolic, and conversely, every parabolic subgroup is the stabilizer of such a flag.

In the case of a vector bundle $\mathbb{V}$, we easily see that the filtration induced by an antidominant character of a parabolic subgroup is also isotropic. However, this filtration is an isotropic filtration by vector bundles only outside a codimension two submanifold. Since every coherent sheaf defined outside a codimension two submanifold has a unique coherent extension to the whole manifold, what we actually get is an isotropic filtration of $\mathbb{V}$ by subsheaves. 

Let us consider the case of orthogonal representations. Our structure group is
\begin{equation*}
\Or(\underline{i})=\left( \prod_{i=1}^r R_i \right) \times \left( \prod_{i=1}^l \Or(W_i) \right)\
\end{equation*}
where $R_i$ is also an orthogonal group of the product $V_i\oplus V_i^*$. This means that when we look at the splitting $\chi= \oplus \chi_i$, each component is an antidominant character of a parabolic subgroup of an orthogonal group. We have two cases:
\begin{itemize}
\item When $i=\sigma (i)$ is fixed by the involution, i.e., the component $\chi_i$ corresponds to a factor of the form $\Or(W_i)$: then, just as above, $\chi_i$ (and therefore $\chi$) induces an isotropic filtration of $\mathbb{W}_i$.

\item When $i\neq \sigma (i)$ is not fixed by the involution: then, $\chi_i=(\psi_i, -\psi_i^t)$ for some $\psi_i \in \mathfrak{gl}(V_i)$. Note that $\mathbb{V}_i$ and $\mathbb{V}_i^*$ are themselves isotropic subbundles. The isotropic filtration of the direct sum might select subspaces from either of them, e.g., if $\psi_i$ has both a positive and a negative eigenvalues (since the eigenvalues of $-\psi^t$ are the symmetric of the eigenvalues of $\psi$, the first half of the filtration will include an isotropic subbundle containing both subspaces from $\mathbb{V}_i$ and $\mathbb{V}_i^*$.) However, at each step in the filtration, each subbundle can be split into the two vertices. 
\end{itemize}

Using the same method as in section \ref{plain}, we can prove the following:

\begin{morphismsfactororth}
The morphism representation $\phi$ restricts to each subsheaf in the representation.
\end{morphismsfactororth}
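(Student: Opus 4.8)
The plan is to repeat the eigenvalue computation used in the plain case almost verbatim, the only new features being that the flag is now isotropic and that the arrows come in dual pairs. The cleanest route exploits the identification, established above, of the orthogonal representations with an $\Or(\underline{i})$-invariant subspace of the plain representations: a reduction $\pi$ to a parabolic $P\subset\Or(\underline{i})$ together with an antidominant character $\chi$ (with $\phi(X)\subset\mathcal{F}^-(\chi)$) is compatible with, and restricts from, the corresponding data for $\GL(\underline{i})$. Thus I would first fix such a pair $(\pi,\chi)$, regard $\chi\in i\mathfrak{k}$ as acting on the total space $V$ with real eigenvalues $\lambda_1<\dots<\lambda_r$ (inducing the isotropic filtration $\mathbb{V}^k=\bigoplus_{\lambda_i\le\lambda_k}\mathbb{V}(\lambda_i)$), and note that, since the representation space is a subspace of $\mathfrak{o}(V)$, the character acts on $\mathfrak{Rep}^o(Q,M,V)$ by the bracket.

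Next I would decompose $\phi$ into $\chi$-eigencomponents. The hypothesis $\phi(X)\subset\mathcal{F}^-(\chi)$, via the maximal-weight computation (\ref{sectionweight}), means that only components of eigenvalue $\lambda\le 0$ occur, so by linearity it suffices to treat a single eigencomponent $\phi_\alpha$ with $[\chi,\phi_\alpha]=\lambda\phi_\alpha$, $\lambda\le 0$. Because the twisting bundle $\mathbb{M}_\alpha$ carries the trivial $\chi$-action, for a $\chi$-eigenvector $x\in\mathbb{V}_{\ta}$ of eigenvalue $\lambda_x$ and any $m\in\mathbb{M}_\alpha$ one computes
\begin{equation*}
\chi\bigl(\phi_\alpha(x\otimes m)\bigr)=\phi_\alpha(\chi(x)\otimes m)+\lambda\,\phi_\alpha(x\otimes m)=(\lambda_x+\lambda)\,\phi_\alpha(x\otimes m),
\end{equation*}
so that $\phi_\alpha$ never raises the $\chi$-eigenvalue, whence $\phi_\alpha(\mathbb{V}^k_{\ta}\otimes\mathbb{M}_\alpha)\subset\mathbb{V}^k_{\ha}$ for every $k$. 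As in section~\ref{plain}, the filtration is a priori isotropic by subbundles only off a codimension-two set, but the extension theorem for coherent sheaves recalled above promotes it to a genuine isotropic filtration of $\mathbb{V}$ by subsheaves, to which the morphisms then restrict.

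The only orthogonal-specific point, and the one I expect to require the most care, is the bookkeeping forced by the involution: I must check that preservation of the filtration by $\phi_\alpha$ is consistent with the dual relation $\phi_{\sa}=-\phi_\alpha^t$ and with the isotropy $\mathbb{V}^{r-k}=(\mathbb{V}^k)^\perp$. Here the two negations cancel, since transposition sends the $\mu$-eigenspace of $\chi$ to the $(-\mu)$-eigenspace but also reverses the arrow, so $-\phi_\alpha^t$ is again a $\chi$-eigencomponent of the \emph{same} eigenvalue $\lambda$ and the inequality $\lambda_x+\lambda\le\lambda_x$ survives unchanged. This is exactly the content of the two cases (vertices fixed versus not fixed by $\sigma$) in the discussion preceding the statement; once it is recorded, the plain argument applies uniformly to both, and the isotropy of the flag guarantees that the dual subsheaves are preserved automatically.
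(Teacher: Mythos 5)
Your proposal is correct and follows essentially the same route as the paper: the paper's proof is precisely an invocation of the plain-case method (decompose $\phi$ into $\chi$-eigencomponents, use $\phi(X)\subset\mathcal{F}^-(\chi)$ to reduce to a single eigencomponent, and run the bracket computation $\chi(\phi_\alpha(x\otimes m))=(\lambda_x+\lambda)\phi_\alpha(x\otimes m)$ to see the filtration is preserved), which is exactly what you do. Your additional check that the dual relation $\phi_{\sigma(\alpha)}=-\phi_\alpha^t$ is compatible with the eigencomponent decomposition is correct (indeed automatic, since $\chi$ is skew with respect to the quadratic form, so the adjoint action preserves the orthogonal subspace and the eigenvalue) and only makes explicit what the paper leaves implicit.
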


This proposition means that each element in the filtration in the definition of stability is actually a sheaf subrepresentation; we call such a subrepresentation an \emph{isotropic} quiver subsheaf. Recall that every orthogonal bundle is isomorphic to its dual, and hence has degree zero. We make the following definition.

\begin{slopestabilityorth}
An orthogonal representation $(E^\mathbb{C}, \phi)$ is \emph{slope stable} if for every isotropic reflexive subsheaf representation $(\mathcal{F},\phi)$ we have
\begin{equation*}
\deg_{a,\tau}^0= \sum_{i=1}^{2r} (a_i \deg (\mathcal{F}_i) - \tau_i \rk \mathcal{F}_i) + \sum_{i=1}^l b_i \deg \mathcal{F}_i <0
\end{equation*}
\end{slopestabilityorth}
Note that the isotropic subsheaf representations are not, by definition, orthogonal representations (since the quadratic form is certainly degenerate.) From now on, when we speak of a subsheaf representation, we implicitly assume it to be `reflexive'.

\begin{equivalencestability}
An orthogonal representation is stable if and only if it is slope stable.
\end{equivalencestability}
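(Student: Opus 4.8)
The plan is to reduce the parabolic--character stability condition of the definition above to the single--subsheaf condition, exactly in the spirit of the reduction behind the simplified plain stability proposition of section~\ref{plain}, but now keeping track of the isotropic structure. Recall that for an orthogonal group the parabolic subgroups are precisely the stabilizers of isotropic flags, and that an antidominant character $\chi$ of $P$ is recovered from its increasing eigenvalues $\lambda_1<\cdots <\lambda_r$ together with the induced isotropic filtration $0\subsetneq \mathbb{V}^1\subsetneq \cdots \subsetneq \mathbb{V}^r=\mathbb{V}$, where $\mathbb{V}^{r-k}=(\mathbb{V}^k)^\perp$. Both $\chi\mapsto \deg (\pi,\chi_i)$ and $\chi\mapsto \tr \chi_i$ are linear in $\chi$, so on the cone of antidominant characters supported on a fixed flag the whole left-hand side of the stability inequality is a single linear functional $S(\chi)$.

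First I would show that it suffices to test maximal parabolics. The antidominant $\chi$ decomposes as a nonnegative combination $\chi=\sum_k c_k\,\chi_{\mathbb{V}^k}$, where $\chi_{\mathbb{V}^k}$ is the elementary antidominant character inducing the two-step isotropic flag $0\subset \mathbb{V}^k\subset (\mathbb{V}^k)^\perp\subset \mathbb{V}$ and $c_k\geq 0$; by linearity $S(\chi)=\sum_k c_k\,S(\chi_{\mathbb{V}^k})$. By the preceding proposition that the morphisms restrict to every subsheaf in the filtration (together with the weight computation (\ref{sectionweight}) characterizing $\mathcal{F}^-$), the admissibility hypothesis $\phi(X)\subset \mathcal{F}^-(\chi)$ holds exactly when $\phi$ restricts to each step $\mathbb{V}^k$; hence the admissible $\chi$ are precisely those all of whose elementary pieces single out an isotropic subsheaf representation. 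As in the plain case, Uhlenbeck--Yau guarantees that these steps are subbundles away from a set of codimension two and so extend to reflexive subsheaves, which is why the test objects are the isotropic reflexive subsheaf representations. Thus $S(\chi)>0$ for all admissible $\chi$ if and only if $S(\chi_{\mathcal{F}})>0$ for every isotropic reflexive subsheaf representation $\mathcal{F}$.

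It then remains to evaluate $S$ on a single elementary character $\chi_{\mathcal{F}}$ and match it, up to a positive factor and a sign, with $\deg^0_{a,\tau}(\mathcal{F})$. Here I would separate the two cases isolated in the text. For a vertex fixed by $\sigma$, i.e. a factor $\Or(W_i)$, the character $\chi_i$ induces an honest isotropic filtration of $\mathbb{W}_i$ and the computation is the orthogonal analogue of the $\GL$ one, producing the $b_i\deg \mathcal{F}_i$ terms. For a non-fixed vertex one has $\chi_i=(\psi_i,-\psi_i^t)$, so the isotropic step splits under the projections to $V_i$ and $V_i^*$ into a pair of mutually dual pieces; using $\deg \mathbb{V}_i^*=-\deg \mathbb{V}_i$ together with the earlier central-element computation (which produced the factor $2\tau_i$ from $\langle c,\chi\rangle=\sum 2\tau_i\,\tr \psi_i$) these assemble into the $\sum_{i=1}^{2r}(a_i\deg \mathcal{F}_i-\tau_i\rk \mathcal{F}_i)$ part of $\deg^0_{a,\tau}$. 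Because the elementary character takes its negative eigenvalue on $\mathcal{F}$, the inequality $S(\chi_{\mathcal{F}})>0$ turns into $\deg^0_{a,\tau}(\mathcal{F})<0$, giving both implications at once.

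The main obstacle is precisely this last bookkeeping for the non-fixed vertices: one must verify that folding the coupled pair $(V_i,V_i^*)$, whose filtration pieces are glued by the duality $\psi_i\mapsto -\psi_i^t$ and by the isotropy relation $\mathbb{V}^{r-k}=(\mathbb{V}^k)^\perp$, reproduces exactly the plain slope expression summed over the $2r$ vertices $q_i,q_i^*$, with the correct parameters and without spurious double counting. Once Derksen--Weyman's dictionary and the parameter identification of the symplectomorphism theorem above are invoked, this is a finite, if delicate, linear-algebra check carried out on two-by-two block matrices with the quadratic form in standard form.
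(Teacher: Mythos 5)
Your proposal is correct and takes essentially the same route as the paper's proof: your decomposition of an admissible antidominant character into elementary two-step isotropic pieces with nonnegative coefficients is precisely the paper's rewriting of $\deg (\pi,\chi)$ as $2\sum_{k\leq \lfloor r/2 \rfloor}(\lambda_k-\lambda_{k+1})\deg \mathbb{V}^k$ using $\deg \mathbb{W}^\perp=\deg \mathbb{W}$ (from the exact sequence $0\to \mathbb{W}^\perp\to \mathbb{V}^*\to \mathbb{W}^*\to 0$), and your converse direction (testing the two-step flag of a single isotropic subsheaf) is the paper's converse verbatim. The fixed versus non-fixed vertex bookkeeping you single out as the main obstacle is exactly the computation the paper carries out explicitly, with the same parameter matching $a_i$, $2\tau_i$, $b_i$, so it goes through as you describe.
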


\begin{proof}
Let $\mathbb{W}\subset \mathbb{V}$ be a subsheaf of an orthonal bundle. From the short exact sequence
\begin{equation*}
0 \to \mathbb{W}^\perp \to \mathbb{V}^* \to \mathbb{W}^* \to 0
\end{equation*}
we find that $\deg \mathbb{W}^\perp = \deg \mathbb{W}$. Then, given a filtration indued by an antidominant character $\chi$, as in the definition of stability, we have for each $k \leq \lfloor r/2 \rfloor$
\begin{equation*}
(\lambda_{r-k-1} - \lambda_{r-k})\deg (\mathbb{V}^k)^\perp=(\lambda_k-\lambda_{k+1})\deg \mathbb{V}^k
\end{equation*}
Therefore,
\begin{equation*}
\deg (\pi, \chi)=\sum_{k=1}^r (\lambda_k - \lambda_{k+1})\deg \mathbb{V}^k = 2\sum_{k=1}^{\lfloor r/2 \rfloor} (\lambda_k-\lambda_{k+1})\deg \mathbb{V}^k
\end{equation*}
where now each subsheaf in the sum is isotropic (recall that orthogonal bundles have degree zero.) When the vertex is fixed, nothing else needs to be said. When $\mathbb{V}=\mathbb{V}_i\oplus \mathbb{V}_{\sigma (i)}$ is the sum of two exchanged vertices, we have a splitting $\mathbb{V}^k=\mathbb{V}^k_i \oplus \mathbb{V}^k_{\sigma (i)}$, and so,
\begin{equation*}
\deg (\pi, \chi_i)=2 \sum_{k=1}^{\lfloor r \rfloor} (\lambda_k-\lambda_{k+1}) \left( \deg \mathbb{V}_i^k +\deg \mathbb{V}_{\sigma(i)}^k \right)
\end{equation*}
Finally,
\begin{align*}
&\sum_{i=1}^{r'} \left(a_i \deg (\pi, \chi_i)-2\tau_i \tr \chi_i \right)+\sum_{i=1}^l b_i \deg (\pi, \chi_i) \\
&=\sum_{i=1}^{r'} \left(2a_i \sum_{k=1}^{\lfloor r/2 \rfloor} (\lambda_k-\lambda_{k+1}) \left( \deg \mathbb{V}_i^k +\deg \mathbb{V}_{\sigma(i)}^k \right)-2\tau_i \tr \chi_i \right)+\sum_{i=1}^l 2b_i \sum_{k=1}^{\lfloor r/2 \rfloor} (\lambda_k-\lambda_{k+1})\deg \mathbb{W}_i^k\\
&=2 \left( \sum_{k=1}^{\lfloor r/2 \rfloor} (\lambda_k-\lambda_{k+1}) \left( \sum_{k=1}^{2r'} (a_i \deg (\mathbb{V}_i) - \tau_i \rk \mathbb{V}_i) + \sum_{i=1}^l b_i \deg \mathbb{W}_i \right) \right)
\end{align*}
Here we've denoted $r'$ the number to orbits of interchanged vertices, so it's not confused with the number of steps in the filtration. Note that by definition, $\lambda_k<\lambda_{k+1}$, and, again, that the terms involve only isotropic subsheaves. Therefore, if the representation is slope-stable, it is stable.

Conversely, given a isotropic sheaf subrepresentation, we apply the stability condition to the two term flag involving that sheaf.
\end{proof}

Inspired by the analogy with the plain case (see section \ref{plain}), we make the following definition:

\begin{orthss}
Let $(V, \phi)$ be an orthogonal quiver bundle. Then it is \emph{semistable} if for every isotropic subsheaf representation $\mathcal{F}$, we have $\deg_{a,\tau}^0(\mathcal{F})\leq 0$.
\end{orthss}

Given an orthogonal representation, we then have two concepts of (semi) stability for it: as an orthogonal representation of a symmetric quiver $(Q,\sigma)$ (which we defined above), or as a plain representation of the underlying quiver $Q$ (which we defined in section \ref{plain}.)  In fact, these are closely connected. 

\begin{orthplainrelation}\label{orthplainrelation}
Let $(Q, \sigma)$ be an symmetric quiver, and $(\mathbb{V}, \phi)$ be an orthogonal bundle representation. Then,
\begin{enumerate}
\item $(\mathbb{V}, \phi)$ is semistable as plain representation if and only if it is semistable as an orthogonal representation.
\item $(\mathbb{V}, \phi)$ is orthogonally stable if and only if it is an orthogonal sum of mutually non-isomorphic sheaf subrepresentations, each of which is stable as a plain sheaf representation.
\end{enumerate}
\end{orthplainrelation}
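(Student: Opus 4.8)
The plan is to reduce everything to the numerical functional $\deg^0_{a,\tau}$, which is additive on short exact sequences (being a fixed linear combination of $\deg$ and $\rk$) and which, thanks to the parameter normalization built into the central element $c$, satisfies $\deg^0_{a,\tau}(\mathcal{F}^\perp)=\deg^0_{a,\tau}(\mathcal{F})$ for every subsheaf representation $\mathcal{F}$; this last identity extends the computation $\deg\mathcal{F}^\perp=\deg\mathcal{F}$ from the proof of Proposition \ref{equivalencestability}, using $a_i=a_{\sigma(i)}$ and $\tau_{\sigma(i)}=-\tau_i$. Throughout I use that the form sends subsheaf representations to subsheaf representations under $\mathcal{F}\mapsto\mathcal{F}^\perp$ (a consequence of the alternating condition (\ref{alternating})), and that reflexive subsheaves defined off codimension two extend uniquely, so all intersections, sums, and degree identities are taken in the reflexive sheaf category and make sense.

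For part (1) one implication is immediate: plain semistability tests $\deg^0_{a,\tau}(\mathcal{F})\le 0$ against all subsheaf representations, while orthogonal semistability tests it only against the isotropic ones, so the former trivially implies the latter. For the converse I would take an arbitrary subsheaf representation $\mathcal{F}$ and manufacture an isotropic sheaf from it. Setting $\mathcal{I}=\mathcal{F}\cap\mathcal{F}^\perp$, which is isotropic, and noting $\mathcal{F}+\mathcal{F}^\perp=\mathcal{I}^\perp$, the exact sequence
\[
0\to \mathcal{F}\cap\mathcal{F}^\perp \to \mathcal{F}\oplus\mathcal{F}^\perp \to \mathcal{F}+\mathcal{F}^\perp \to 0
\]
together with additivity and the perp identity gives $2\deg^0_{a,\tau}(\mathcal{F})=\deg^0_{a,\tau}(\mathcal{I})+\deg^0_{a,\tau}(\mathcal{I}^\perp)=2\deg^0_{a,\tau}(\mathcal{I})$, so $\deg^0_{a,\tau}(\mathcal{F})=\deg^0_{a,\tau}(\mathcal{I})\le 0$ because $\mathcal{I}$ is isotropic. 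This is precisely plain semistability.

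For part (2) I would argue by iterated extraction of stable summands. For the reverse implication, suppose $\mathbb{V}=\mathbb{V}_1\perp\cdots\perp\mathbb{V}_m$ with the $\mathbb{V}_j$ plain-stable of slope $0$, mutually non-isomorphic, and each carrying a non-degenerate form; then $\mathbb{V}$ is plain semistable, hence orthogonally semistable by part (1). A proper nonzero isotropic $\mathcal{F}$ with $\deg^0_{a,\tau}(\mathcal{F})=0$ would be a slope-$0$ subobject, hence a partial direct sum of the $\mathbb{V}_j$ (the only slope-$0$ subobjects of a sum of mutually non-isomorphic stables of equal slope); but such a partial sum is non-degenerate, so cannot be isotropic, forcing $\deg^0_{a,\tau}(\mathcal{F})<0$, i.e. orthogonal stability. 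For the forward implication, orthogonal stability plus part (1) makes $\mathbb{V}$ plain semistable of slope $0$, so it contains a plain-stable slope-$0$ subsheaf $\mathcal{S}$ (a slope-$0$ subobject of minimal rank). The restricted form is a representation morphism $\mathcal{S}\to\mathcal{S}^*$ between stable objects of equal slope, hence zero or an isomorphism; it cannot vanish, since then $\mathcal{S}$ would be an isotropic slope-$0$ subsheaf with $\deg^0_{a,\tau}(\mathcal{S})=0$, contradicting strict orthogonal stability. Thus the form restricts non-degenerately, $\mathbb{V}=\mathcal{S}\perp\mathcal{S}^\perp$, and $\mathcal{S}^\perp$ inherits orthogonal stability; iterating produces $\mathbb{V}=\mathcal{S}_1\perp\cdots\perp\mathcal{S}_m$ with each $\mathcal{S}_j$ self-dual and plain-stable. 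If two summands were isomorphic, the graph of a suitably rescaled isometry between them would be an isotropic slope-$0$ subsheaf of $\mathbb{V}$ with $\deg^0_{a,\tau}=0$, again contradicting orthogonal stability, so the $\mathcal{S}_j$ are mutually non-isomorphic.

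I expect the genuinely delicate points to be, first, the bookkeeping establishing $\deg^0_{a,\tau}(\mathcal{F}^\perp)=\deg^0_{a,\tau}(\mathcal{F})$: one must track carefully how $\perp$ permutes the vertices $i\leftrightarrow\sigma(i)$, and how the signs $\tau_{\sigma(i)}=-\tau_i$ together with the $b_i$ attached to the fixed vertices conspire to leave the functional invariant. Second, in part (2), the structural claim that the only slope-$0$ subobjects of a sum of mutually non-isomorphic stable slope-$0$ objects are the partial direct sums, and the attendant non-degeneracy/non-isomorphism dichotomy; these rest on the fact, recalled earlier, that quiver sheaves form an abelian category in which any morphism between stable objects of equal slope is either zero or an isomorphism. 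Because all the perp and intersection operations live in the reflexive category, the statement is correctly phrased for reflexive subsheaf representations.
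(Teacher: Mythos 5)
Your proposal is correct and follows essentially the same route as the paper: part (1) via the isotropic intersection $\mathcal{F}\cap\mathcal{F}^\perp$, the exact sequence $0\to\mathcal{F}\cap\mathcal{F}^\perp\to\mathcal{F}\oplus\mathcal{F}^\perp\to\mathcal{F}+\mathcal{F}^\perp\to 0$, and the perp-degree identity, and part (2) by inductively splitting off orthogonal summands and ruling out isomorphic summands with the graph-type isotropic embedding. The only cosmetic difference is in the induction of part (2): you extract a minimal-rank plain-stable slope-zero subsheaf $\mathcal{S}$ and invoke the zero-or-isomorphism dichotomy for the induced map $\mathcal{S}\to\mathcal{S}^*$, whereas the paper splits along an arbitrary plain-destabilizing subrepresentation $\mathcal{F}$ (whose intersection with $\mathcal{F}^\perp$ must vanish by orthogonal stability) and recurses on the resulting orthogonally stable pieces.
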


For the proof note, that $\mu_{a,\tau} (\mathbb{V})=0$, since the vector bundle is orthogonal, and so it is isomorphic to its dual (a fact we've used before.)

\begin{proof}
\begin{enumerate}
\item One direction is obvious. For the other, suppose $\mathbb{V}$ is orthogonally semistable, let $\mathcal{F}$ be an arbitrary (sheaf) subrepresentation, and denote $\mathcal{E}:=\mathcal{F}\cap \mathcal{F}^\perp$. Then, $\mathcal{E}$ defines an isotropic subrepresentation, and we have the exact sequence
\begin{equation*}
0\to \mathcal{E}\to \mathcal{F}\oplus \mathcal{F}^\perp \to \mathcal{M} \to 0
\end{equation*}
We have the isomorphism $\mathcal{M}=\mathcal{E}^\perp$ (as sheaves), and since $\mathcal{E}$ is isotropic and $\phi$ alternating, $(\mathcal{M}, \phi)$ is a subrepresentation. Indeed, $\phi_\alpha$ restricts to a map $\phi_\alpha^1$ of $\mathcal{E}$, and we can write
\begin{equation*}
\phi_\alpha = \left( 
\begin{array}{cc}
\phi_\alpha^1 & \beta \\
0 & \phi_\alpha^2
\end{array}
 \right)
\end{equation*}
Now, the condition on the maps states that $\phi_{\sigma(\alpha)}=-\phi_\alpha^t$. Since $\phi_{\sigma(\alpha)}$ also restricts to a map in $\mathcal{E}$, this implies that $\beta=0$.

Therefore, $\deg^0_{a,\tau} (\mathcal{F}\oplus \mathcal{F}^\perp )=\deg^0_{a,\tau}  (\mathcal{E})+\deg^0_{a,\tau}  (\mathcal{E}^\perp)$. Now, since the quadratic form $C$ is non-degenerate, it gives an isomorphism $E\simeq E^*$, while we have a short exact sequence $0\to E^\perp \to V\to E^*\to 0$. Hence, $\deg^0_{a,\tau}  (F)=\deg^0_{a,\tau} (E)\leq 0$ by the orthogonal semistability of $V$.

\item Suppose $(\mathbb{V},\phi)$ is orthogonally stable, but not stable as a plain representation, and let $(\mathcal{F}, \phi)$ be a destabilizing subrepresentation.  Using the notation of the previous point, since $\mathbb{V}$ is orthogonally stable, $\mathcal{E}$ is trivial, which means that we have an orthogonal decomposition $\mathbb{V}=\mathcal{F}\oplus \mathcal{F}^\perp$, which is also a decomposition of orthogonal representations, since $\phi$ is alternating and the quadratic form non-degenerate. Actually, each of the representations is orthogonally stable as well, because $\mathbb{V}$ is so (though not necessarily stable as plain representations.) By induction on the rank, we decompose $\mathbb{V}=\perp_i \mathcal{F}_i$ where the $\mathcal{F}_i$ are stable as plain representations. If we have $\mathcal{F}_1\simeq \mathcal{F}_2$, then the embedding $x\mapsto (x,ix)$ gives an isotropic subrepresentation contradicting the stability of $\mathbb{V}$. Conversely, if no two summands are isomorphic, any subrepresentation of maximal degree would be a sum of some of the $\mathcal{F}_i$, and cannot be isotropic (again, the $\mathcal{F}_i$ cannot be isotropic since $\mathbb{V}$ is non-degenerate.)
\end{enumerate}
\end{proof}

The following is an easy corollary of the decomposition in the theorem
\begin{simpleorthstable}
Let $(\mathbb{V}, \phi)$ be an orthogonally stable representation. Then it is stable as a plain representation if and only if it is orthogonally simple (i.e., its only automorphisms are $\pm I$).
\end{simpleorthstable}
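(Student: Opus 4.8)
The plan is to prove both implications directly, with almost all of the work already done by the orthogonal decomposition of Theorem~\ref{orthplainrelation}(2); the corollary then reduces to comparing automorphism groups. For the forward implication, suppose $(\mathbb{V},\phi)$ is stable as a plain representation. Then it is simple, so its only representation automorphisms are the nonzero scalars $\lambda I$. An automorphism of the \emph{orthogonal} representation must in addition preserve the quadratic form, and for a scalar this reads $C(\lambda v,\lambda w)=\lambda^2 C(v,w)=C(v,w)$, forcing $\lambda^2=1$. Hence the orthogonal automorphism group is exactly $\{\pm I\}$, i.e.\ $(\mathbb{V},\phi)$ is orthogonally simple.

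For the converse, assume $(\mathbb{V},\phi)$ is orthogonally simple. Since it is orthogonally stable by hypothesis, Theorem~\ref{orthplainrelation}(2) provides an orthogonal decomposition $\mathbb{V}=\perp_i \mathcal{F}_i$ into mutually non-isomorphic summands, each stable as a plain representation. Suppose, for contradiction, that there is more than one summand. Fix one of them, say $\mathcal{F}_1$, and let $g$ act as $+I$ on $\mathcal{F}_1$ and as $-I$ on $\mathcal{F}_1^\perp=\perp_{i\geq 2}\mathcal{F}_i$. I would check that $g$ is an orthogonal automorphism: it preserves $C$ because the diagonal blocks contribute $(\pm 1)^2=1$ while the off-diagonal blocks vanish by orthogonality of the decomposition, and it commutes with $\phi$ because each $\mathcal{F}_i$ is a subrepresentation, so $\phi$ preserves each summand, on which $g$ acts by a single sign. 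As $g\neq \pm I$, this contradicts orthogonal simplicity. Therefore the decomposition has a single summand and $\mathbb{V}=\mathcal{F}_1$ is stable as a plain representation.

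There is no deep obstacle here once the decomposition theorem is in hand; the points requiring care are the two standard inputs I would want to state cleanly. The first is that a plain-stable representation is simple, so that its automorphisms are genuinely just scalars (as noted in the discussion of polystability above); the second is that the sign-flip map built from the orthogonal decomposition is a bona fide automorphism of the \emph{orthogonal} representation rather than merely of the underlying plain one. Both are routine verifications, and together they reduce orthogonal simplicity to the absence of a nontrivial orthogonal splitting of $\mathbb{V}$, which is precisely the content of plain stability.
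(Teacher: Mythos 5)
Your proof is correct and is essentially the paper's own argument: the paper merely remarks that the corollary follows easily from the decomposition in Theorem \ref{orthplainrelation}(2), and your two directions (Schur-type simplicity plus invariance of the quadratic form forcing $\lambda^2=1$, and the sign-flip automorphism built from a multi-summand orthogonal decomposition) are exactly that remark fleshed out. No gaps; the two inputs you flag as needing care are indeed the only ones, and both are standard.
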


This has the following easy consequence:

\begin{onlyfixed}
If an orthogonal representation $\mathbb{V}, \phi)$ is stable as a plain representation, it is trivial on any vertex that is not fixed by the involution, i.e., $\mathbb{V}_i=0$ if $i\neq \sigma (i)$.
\end{onlyfixed}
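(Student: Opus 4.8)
The plan is to argue by contradiction, deriving everything from the preceding Corollary \ref{simpleorthstable}. Assume $(\mathbb{V},\phi)$ is stable as a plain representation. Then by Corollary \ref{simpleorthstable} it is orthogonally simple, so its only automorphisms as an orthogonal representation are $\pm\mathrm{id}$. I would then show that a nonzero non-fixed vertex manufactures automorphisms beyond $\pm\mathrm{id}$. The natural candidates come from the centre of the structure group: recall $R\cong\prod_i\GL(V_i)\times\prod_j\Or(W_j)$, and for a non-fixed vertex $i$ the central factor $\mathbb{C}^*\subset\GL(V_i)$ sits inside $\Or(V)$ as the one-parameter subgroup $T_i$ acting by $\lambda\mapsto(\lambda\,\mathrm{id}_{\mathbb{V}_i},\lambda^{-1}\mathrm{id}_{\mathbb{V}_{\sigma(i)}})$ on the hyperbolic pair $(\mathbb{V}_i,\mathbb{V}_{\sigma(i)})$ and trivially elsewhere. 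If $\mathbb{V}_i\neq 0$ then $T_i$ is positive-dimensional and meets $\{\pm\mathrm{id}\}$ only there, so any $\lambda\neq\pm 1$ gives an element of $R$ outside the permitted automorphisms.

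The key step — and the one I expect to be the main obstacle — is to show that $T_i$ genuinely \emph{fixes the section} $\phi$, so that these elements really are automorphisms of $(\mathbb{V},\phi)$. Since $T_i$ is central in $R$, Schur's lemma makes it act on each irreducible summand $Z_\alpha$ of $\mathrm{Rep}(\tilde{Q})$ by a single scalar, and $T_i$ fixes $\phi$ exactly when that scalar is trivial on every $Z_\alpha$ with $\phi_\alpha\neq 0$. Concretely, on a component $\Hom(\mathbb{V}_k,\mathbb{V}_l)$ the element $\lambda$ acts with the weight dictated by how $T_i$ rescales source and target, so the whole matter reduces to a weight computation showing these weights vanish on the support of $\phi$. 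This is precisely the delicate point: a priori $\phi$ may link the non-fixed vertex to the rest of the quiver with nonzero weight, in which case $T_i$ does \emph{not} fix $\phi$ and the simplicity contradiction does not close directly. I would isolate the cases where $\phi$ does connect vertex $i$ outward and treat them separately rather than hoping the weight always vanishes.

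For those remaining cases I would switch to the subsheaf formulation, using Proposition \ref{equivalencestability} and Theorem \ref{orthplainrelation}. The hypothesis forces $\mathbb{V}$ to be a single plain-stable, self-dual piece (the trivial one-term decomposition in Theorem \ref{orthplainrelation}(2)), and $\mu_{a,\tau}(\mathbb{V})=0$ because an orthogonal bundle is isomorphic to its dual. The plan here is to exploit the non-degenerate pairing of $\mathbb{V}_i$ with the isotropic bundle $\mathbb{V}_{\sigma(i)}$ to produce an isotropic subsheaf representation whose $a$-degree equals that of $\mathbb{V}$, contradicting the strict inequality of slope stability. The same obstacle reappears in this guise: one must choose the subsheaf so that it is simultaneously $\phi$-invariant and isotropic, which again amounts to controlling how $\phi$ propagates out of the non-fixed vertex, so the two approaches stand or fall on the same computation.
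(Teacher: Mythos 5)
Your reconstruction of the intended argument is accurate: the paper offers no proof of this corollary beyond calling it an easy consequence of the preceding one, and the argument the author evidently has in mind is exactly your torus $T_i$ --- a nonzero non-fixed vertex should generate orthogonal automorphisms other than $\pm\mathrm{id}$, contradicting orthogonal simplicity. You have also put your finger on precisely the right obstruction: $T_i$ acts on the summand of the representation space attached to an arrow $\alpha$ with a weight that is nontrivial whenever $\alpha$ couples $\{i,\sigma(i)\}$ to itself or to the rest of the quiver (for $\alpha\colon i\to\sigma(i)$ of type $\Lambda^{2}$ the weight is $\lambda\mapsto\lambda^{-2}$), so $T_i$ fixes $\phi$ only if $\phi_\alpha=0$ on every such arrow. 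But neither of your two routes ever discharges that case --- both end with the requirement to ``control how $\phi$ propagates out of the non-fixed vertex,'' which is the assertion to be proved --- so the proposal is incomplete as it stands.

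The more important point is that this case cannot be discharged: the corollary is false as stated, and it fails exactly where you stopped. Take the symmetric quiver with vertices $1,2$, $\sigma(1)=2$, and arrows $\alpha\colon 1\to 2$, $\beta\colon 2\to 1$ (both necessarily fixed by $\sigma$), untwisted. Let $X$ be a curve of genus at least two, $E$ a stable rank-two bundle with $\det E\cong\mathcal{O}_X$, and $\omega\colon E\to E^{*}$ the skew-symmetric isomorphism $\omega(v)(w)=v\wedge w\in\det E=\mathcal{O}_X$. Set $\mathbb{V}_1=E$, $\mathbb{V}_2=E^{*}$ with the canonical symmetric pairing $C$ between them, and $\phi_\alpha=\omega$, $\phi_\beta=\omega^{-1}$; the alternating condition holds because $\omega$ and $\omega^{-1}$ are skew, so this is an orthogonal representation with both non-fixed vertices nonzero. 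A subsheaf representation is a pair $(F_1,F_2)$ with $\omega F_1\subseteq F_2$ and $\omega^{-1}F_2\subseteq F_1$, i.e.\ $F_2=\omega F_1$; hence for any parameters compatible with the orthogonal structure ($a_1+a_2=a>0$, $\tau_2=-\tau_1$) one gets
\begin{equation*}
\deg_{a,\tau}\bigl(F_1\oplus\omega F_1\bigr)=a\deg F_1-(\tau_1+\tau_2)\rk F_1=a\deg F_1<0
\end{equation*}
for every proper nonzero $F_1\subset E$, since $E$ is stable of slope zero. So $(\mathbb{V},\phi)$ is stable as a plain representation. It is also orthogonally simple, consistently with the corollary you invoke: an orthogonal automorphism is a pair $(g,(g^{t})^{-1})$ with $g^{t}\omega g=\omega$, and simplicity of $E$ forces $g=\lambda\,\mathrm{id}$ with $\lambda^{2}=1$. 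Note that over a point the same data is only semistable, because the subrepresentations $(F_1,\omega F_1)$ then have weight exactly zero; the failure is genuinely a positive-genus bundle phenomenon, which is why the finite-dimensional intuition behind the corollary breaks down. What your weight computation actually proves is the correct residue of the statement: if a plain-stable orthogonal representation has $\mathbb{V}_i\neq 0$ with $i\neq\sigma(i)$, then $\phi$ must be nonzero on some arrow meeting $\{i,\sigma(i)\}$ on which $T_i$ acts with nontrivial weight. The vanishing claim itself should be dropped or re-proved under additional hypotheses, not patched.
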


\subsection{Polystability}

The gauge equations for the orthogonal case are just the projection of the equations for the plain case onto the Lie algebra of the orthogonal group. Thus, if an orthogonal representation already solves the gauge equation for the orthogonal case, it also solves them for the plain case, and so it is polystable as a plain representation. This means, in particular, that we have a splitting
\begin{equation*}
(\mathbb{V}, \phi)=\bigoplus (\mathcal{F}_i, \phi)
\end{equation*}
into stable (plain sheaf) subrepresentations. Now, a given summand ($\mathcal{F}_i, \phi)$ might very well be an orthogonal representation (i.e., the quadratic form might be non-degenerate on its total space), in which case it is orthogonally stable as well, a case which we described above. Otherwise, if the representation is not orthogonal, it must necessarily intersect its orthogonal complement in $(\mathbb{V}, \phi)$, and since stable plain representations are simple, it must be isotropic. Since $(\mathbb{V}, \phi)$ is itself orthogonal, there is a $j\neq i$ such that $(\mathcal{F}_j, \phi)\simeq (\mathcal{F}_i, \phi)^*$, and the quadratic form restricts to the standard orthogonal pairing. In other words, $(\mathcal{F}_i, \phi)\oplus (\mathcal{F}_j, \phi)=(\mathcal{F}_i, \phi)\oplus (\mathcal{F}_i^*, \phi)$ is stable orthogonal representation (but not, of course, stable as a plain representation.) We arrive at the following result:
\begin{psdecomp}\label{psdecomp}
Let $(\mathbb{V}, \phi)$ be an orthogonal representation which solves the Hitchin-Kobayashi correspondence. Then, we have a decomposition
\begin{equation}\label{decomp}
(\mathbb{V}, \phi)= \bigoplus (\mathcal{F}_i, \phi)^{f_i} \oplus \bigoplus ((\mathcal{E}_i, \phi)\oplus  (\mathcal{E}^*_i, \phi))^{e_i} \oplus \bigoplus ((\mathcal{S}_i, \phi)\oplus  (\mathcal{S}^*_i, \phi))^{s_i}
\end{equation}
where $f_i$, $e_i$, and $s_i$ are positive integers,  $(\mathcal{F}_i, \phi)$ are stable orthogonal subrepresentations, $(\mathcal{S}_i, \phi)$ and  $(\mathcal{E}_i, \phi)$ are stable plain representations respectively  isomorphic and not isomorphic to their dual. Further, a given factor is not isomorphic to any other factor in the sum, and the sums $(\mathcal{E}_i, \phi)\oplus (\mathcal{E}^*_i, \phi)$ and $(\mathcal{S}_i, \phi)\oplus  (\mathcal{S}^*_i, \phi)$ endowed with the standard orthogonal pairing.
\end{psdecomp}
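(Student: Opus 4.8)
The plan is to deduce the statement from the plain polystability correspondence and then to sort the resulting stable summands by the behaviour of the quadratic form $g$. As in the paragraph preceding the statement, I would first record that the orthogonal gauge equations are the projection of the plain ones onto the orthogonal Lie subalgebra: for a connection compatible with the reduction to $\Or(\underline{i})$ both $\Lambda F_i$ and $[\phi,\phi^*]$ already lie in $\mathfrak{o}(\underline{i})$, so the plain moment-map value coincides with its orthogonal projection, and a solution of the orthogonal equations is automatically a solution of the plain ones. The plain Hitchin-Kobayashi polystability correspondence then forces $(\mathbb{V},\phi)$ to be plain-polystable, producing a splitting $(\mathbb{V},\phi)=\bigoplus_k (\mathcal{G}_k,\phi)$ into stable plain sheaf subrepresentations, unique up to reordering and isomorphism by Krull--Schmidt; I collect isomorphic summands so that the distinct representatives are pairwise non-isomorphic and carry well-defined multiplicities.

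Next I would analyse one summand at a time through the restriction of $g$. Each $(\mathcal{G}_k,\phi)$ is simple, being a stable plain representation, so the radical of $g|_{\mathcal{G}_k}$, which is a subrepresentation, is either $0$ or all of $\mathcal{G}_k$. In the first case $g$ restricts to a non-degenerate form, $\mathcal{G}_k$ is itself an orthogonal subrepresentation, hence orthogonally stable by Theorem \ref{orthplainrelation}, and I label it $\mathcal{F}_i$. In the second case $\mathcal{G}_k$ is isotropic; since $g$ is non-degenerate on all of $\mathbb{V}$, pairing $\mathcal{G}_k$ against $\mathbb{V}$ produces a second summand isomorphic to $\mathcal{G}_k^*$, with $g$ restricting to the standard dual pairing on $\mathcal{G}_k\oplus\mathcal{G}_k^*$; the alternating condition on $\phi$, argued exactly as in the proof of Theorem \ref{orthplainrelation}, guarantees that this sum is again a subrepresentation.

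Finally I would split the isotropic summands into two families by Schur's lemma, which is where the main difficulty lies. For simple $\mathcal{G}_k$ the space of maps to its dual is at most one-dimensional: when it vanishes $\mathcal{G}_k\not\simeq\mathcal{G}_k^*$ and the pair has type $\mathcal{E}_i\oplus\mathcal{E}_i^*$, while when it does not vanish $\mathcal{G}_k\simeq\mathcal{G}_k^*$ carries a non-degenerate self-pairing that is \emph{either} symmetric \emph{or} antisymmetric. In the orthogonal case the symmetric ones are exactly the $\mathcal{F}_i$ already produced, whereas the antisymmetric self-dual summands cannot support the symmetric form $g$ non-degenerately and so must occur isotropically, yielding the pairs $\mathcal{S}_i\oplus\mathcal{S}_i^*$ (in the symplectic case the two symmetry types trade places, in accordance with our standing convention). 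The genuinely delicate point is precisely this trichotomy: verifying that a self-dual stable plain representation of the symmetry type opposite to $g$ must be isotropic, and that it can appear only paired with a copy of itself, which is what separates the $\mathcal{S}_i$ from the $\mathcal{F}_i$. Assembling the three families, recording the multiplicities $f_i$, $e_i$, $s_i$, and using the uniqueness of the plain decomposition to exclude coincidences among the chosen representatives then yields (\ref{decomp}).
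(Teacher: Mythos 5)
Your first two paragraphs reproduce the paper's own argument, and they are correct: the orthogonal gauge equations are the projection of the plain ones, so a solution is plain-polystable and splits into stable plain sheaf subrepresentations; each summand, being simple, either meets $g$ non-degenerately (hence is an orthogonally stable $\mathcal{F}_i$, by Theorem \ref{orthplainrelation}) or is isotropic, in which case non-degeneracy of $g$ on $\mathbb{V}$ pairs it with a second summand isomorphic to its dual, carrying the standard pairing. Up to this point your proposal and the paper's proof coincide.

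The gap is in your final trichotomy. The statement does not require the $\mathcal{S}_i$ to be of antisymmetric (symplectic) type: it only requires them to be stable plain representations isomorphic to their duals, sitting in hyperbolic pairs. By insisting that the $\mathcal{S}_i$ are exactly the antisymmetric self-dual summands, and that ``the symmetric ones are exactly the $\mathcal{F}_i$ already produced,'' you create a case your procedure does not cover: a stable summand $\mathcal{G}$ whose (unique up to scalar, by your own Schur argument) self-duality is \emph{symmetric} can perfectly well occur isotropically in $\mathbb{V}$. For instance, let $\mathcal{G}$ carry an orthogonal structure $q$ and take $\mathbb{V}=\mathcal{G}\oplus\mathcal{G}^*$ with the hyperbolic pairing. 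In the plain decomposition, $g$ vanishes identically on $\mathcal{G}$, so it was \emph{not} produced as an $\mathcal{F}_i$; and it is not of antisymmetric type, so your sorting assigns it nowhere. Two repairs are available. Either follow the paper and classify isotropic pairs only by whether the summand is self-dual, ignoring the symmetry type of the self-duality altogether (then this $\mathcal{G}$ is simply an $\mathcal{S}_i$, which the statement permits); or keep your finer classification but supply the missing re-splitting argument: for such a $\mathcal{G}$ the graphs $\Gamma_{\pm q}=\{(x,\pm q(x,\cdot))\}$ inside $\mathcal{G}\oplus\mathcal{G}^*$ are $\phi$-invariant (by the compatibility of $q$ with $\phi$), mutually $g$-orthogonal, and $g$-non-degenerate copies of $\mathcal{G}$, so the hyperbolic pair re-splits as an orthogonal sum of two $\mathcal{F}$'s. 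Either repair completes the proof; as written, the step is false and the asserted trichotomy is not exhaustive.
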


\begin{riemanncase}
We can make the following change in the previous composition. Let $(\mathcal{S}_i, \phi)$ be a summand that is isomorphic to its dual. Then, if we choose an $\mathbb{C}$-linear isomorphism $\psi: \mathcal{S}_i^*\simeq \mathcal{S}_i$, it induces an isomorphism $(\mathcal{S}_i, \phi)\oplus  (\mathcal{S}^*_i, \phi) \simeq (\mathcal{S}_i \otimes \mathbb{C}^2, \phi)$ defined by $(f,g)\mapsto f\otimes e_1+\psi (g)\otimes e_2$ (here we're implicitly using that $\mathcal{S}_i$ is locally free outside codimension two.) Recall that such a $\mathbb{C}$ linear isomorphism is equivalent to a pairing on $\mathcal{S}_i$, and in fact we can arrange so that the pairing is skew-symmetric. We will also assume that this isomorphism respects the vertices and the involution on them, in the sense of a representation of a symmetric quiver. Then, the orthogonal pairing on $\mathcal{S}_i\oplus \mathcal{S}^*_i$ coming from $\mathbb{V}$ naturally induces a skew-symmetric pairing on $\mathbb{C}^2$. On the other hand, the maps $\phi_\alpha$ naturally induce maps $\overline{\phi}_\alpha$ on $\mathcal{F}\otimes \mathbb{C}$, determined by the conditions $f\otimes e_i \mapsto \phi_\alpha(f)\otimes e_1$, and $f\otimes e_2 \mapsto \psi \phi_{\sigma(\alpha)} \psi^{-1} (g)\otimes e_2$. This map is now alternating with respect to the symplectic form on $\mathcal{F}$; in other words, the previous decomposition can be written as
\begin{equation*}
(\mathbb{V}, \phi)= \bigoplus (\mathcal{F}_i, \phi)^{f_i} \oplus \bigoplus ((\mathcal{E}_i, \phi)\oplus  (\mathcal{E}^*_i, \phi))^{e_i}  \oplus \bigoplus (\mathcal{S}_i, \phi)^{s_i}
\end{equation*}
where now the $\mathcal{S}_i$ are symplectic representations of the symmetric quiver. In this way, both orthogonal and symplectic representations of a symmetric quiver are necessary, and in this way we're naturally `thrown' into the concept of supermixed quivers, which we'll see in the next section.
\end{riemanncase}

We have in fact fully characterized polystability.

\begin{completesolution}
Let $(\mathbb{V}, \phi)$ be an orthogonal representation. Then, there is a hermitian metric solving the gauge equations if and only if it has a decomposition as in Lemma \ref{psdecomp}.
\end{completesolution}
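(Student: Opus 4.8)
The plan is to prove the two implications separately, since one of them is already in hand. The forward implication---that a hermitian metric solving the orthogonal gauge equations forces a decomposition of the form (\ref{decomp})---is exactly the content of Lemma \ref{psdecomp}, so I would simply invoke it. All the substance lies in the converse: starting from an orthogonal representation presented as in (\ref{decomp}), I must construct a hermitian metric whose Chern connection solves the orthogonal gauge equations. Throughout, the only analytic input will be the Hitchin--Kobayashi correspondence for \emph{plain} (infinitesimally) simple representations proved above; everything else is linear algebra and bookkeeping of the quadratic form.

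For the converse I would treat each of the three types of summand separately and then assemble them orthogonally. The two ``reducible'' types are easy. For a summand $(\mathcal{E}_i,\phi)\oplus(\mathcal{E}_i^*,\phi)$ with $\mathcal{E}_i$ stable and not self-dual, I would apply the plain correspondence to the stable factor $\mathcal{E}_i$ to obtain a metric $h$ solving its plain gauge equations, and then equip $\mathcal{E}_i^*$ with the dual metric $h^*$, which solves the gauge equations for the dual representation. Because the pairing on the sum is the standard hyperbolic one interchanging the two factors, the metric $h\oplus h^*$ is automatically compatible with the quadratic form, and it solves the orthogonal equations on the pair. The summands $(\mathcal{S}_i,\phi)\oplus(\mathcal{S}_i^*,\phi)$ are handled identically, the only difference being the sign of the self-pairing, which is absorbed exactly by the symplectic reorganization described in the preceding remark.

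The genuinely delicate case is an orthogonally stable summand $(\mathcal{F}_i,\phi)$. Being orthogonally stable and orthogonally simple (its only automorphisms are $\pm I$), such a summand is stable as a plain representation by the corollary above, so the plain correspondence again produces a metric $h$ solving the plain gauge equations. What remains is to upgrade $h$ to a metric \emph{compatible} with the quadratic form, i.e.\ one reducing the structure group to $\Or(\underline{i},\mathbb{R})$. Here I would argue by uniqueness: the nondegenerate form transports $h$ to a second metric $h'$ solving the same plain gauge equations, and since a stable plain representation admits a solution unique up to a positive scalar, $h'$ must be a scalar multiple of $h$; rescaling by the appropriate root of that scalar produces a genuinely compatible solution. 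I expect this normalization---turning uniqueness-up-to-scalar into honest orthogonal compatibility, where the symmetric (rather than Hermitian) nature of the form must be used with care---to be the main obstacle of the whole argument.

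Finally I would assemble the pieces. As the summands in (\ref{decomp}) are mutually orthogonal, the total metric is their orthogonal direct sum, and the fibrewise moment map is block-diagonal along this decomposition, so the assembled metric solves the gauge equations provided each block does. The one bookkeeping point is the matching of stability parameters: all stable constituents share the common slope forced by the degree-zero normalization (every orthogonal bundle having degree zero), so a single central $\tau$ works simultaneously across every block. Since, as recorded in the subsection on polystability, the orthogonal gauge equation is precisely the projection of the plain one onto $\mathfrak{o}(\underline{i})$, a solution of the plain equations that is compatible with the orthogonal structure is automatically a solution of the orthogonal equations, and the construction is complete.
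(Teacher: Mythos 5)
Your proof is correct and follows the same route as the paper's own (two-sentence) proof: the forward direction is exactly Lemma \ref{psdecomp}, and the converse reduces to the plain Hitchin--Kobayashi correspondence applied to the stable constituents, together with the observation that the orthogonal gauge equations are the projection of the plain ones onto the Lie algebra of $\Or(\underline{i})$. Your write-up is in fact more careful than the paper's, which never addresses why the plain solution metric can be chosen compatible with the quadratic form; your dual-metric construction on the hyperbolic blocks and the uniqueness-up-to-scalar rescaling argument on the orthogonally stable blocks supply exactly that suppressed step (the only quibble being that you assert orthogonal simplicity of the $\mathcal{F}_i$, which either holds automatically if one reads them as plain-stable, as in the derivation of Lemma \ref{psdecomp}, or is recovered by one further application of Theorem \ref{orthplainrelation} to split them into plain-stable orthogonal pieces).
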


\begin{proof}
If a representation has a decomposition as in Lemma \ref{psdecomp}, then it is a sum of stable representations, so it solves the plain gauge equations. Since it is already an orthogonal representation, it solves the orthogonal equations.
\end{proof}

We are now only missing one piece in a complete Hitchin-Kobayashi correspondence: to relate polystability in the sense of solving the gauge equations with polystability in the sense of satisfying a linear criterion like that of stability. The missing step is to characterize the Jordan-H\"older filtration associated with a semistable object, as shown in \cite{ggm}. Strictly speaking, our case doesn't fit into that framework, since their correspondence needs to be tweaked in the same sense that the results in \cite{mundet} are tweaked in section \ref{hk}; doing this, however, should be straightforward. More meaningfully, there isn't, at present, any complete correspondence for base manifolds of higher dimension, at least within the point of view we've taken (i.e., the correspondence as a linear symplectic criterion; see, however, \cite{lt}.)

\section{Supermixed Quivers}\label{supermixed}

In this section, we will consider a generalization of symmetric quivers that was originally introduced by Zubkov \cite{zubkov1} \cite{zubkov2}, the \emph{supermixed quivers}. These are flexible enough to allow for a unified treatment of both symplectic and orthgonal symmetries.  However, the case is simple enough that the generalization of our results for symmetric quivers is immediate. More importantly, they introduce another example of a concrete interpretation for generalized quivers.

\subsection{Finite dimensional case}

We will use the definition of Bocklandt \cite{bocklandt}, whose more abstract formalism better suits our treatment. Also, the fact that he uses involutions to pin down the quiver symmetries might very well apply to include other closed linear groups; we hope to come back to this issue in some later paper. Bocklandt's definition is as follows:

\begin{dualizing}
Let $S$ be a finite dimensional complex semisimple algebra, and $M$ an $S$-bimodule. A \emph{dualizing structure on $(S,M)$} consists of two anti-linear involutions $*: S\to S$ and $*: M\to M$ satisfying the compatibility condition:
\begin{equation*}
(amb)^*=b^* m^* a^*
\end{equation*}
for all $a,b\in S$ and $m\in M$. A \emph{dualmod} (DUalized ALgebra and MODule) is a pair $(S,M)$ with a dualizing structure.
\end{dualizing}

The setup above looks less esoteric if we take into account the following fact: given $S$ and $M$ as above (but without the anti-involutions,) then there is a (plain) quiver $Q$ such that $(S,M)\simeq (\mathrm{M} (\underline{i}), \mathrm{Rep} (Q, \underline{i}))$, where $\mathrm{M}(\underline{i})=\bigoplus \End (\mathbb{C}^i)$. (In fact, we have been implicitly using this all through the paper.) It turns out that one can still geometrically interpret dualmods as representations of supermixed quivers.

\begin{supermixed}
\begin{enumerate}
\item A \emph{supermixed quiver} $(Q, \sigma, \epsilon)$ is a symmetric quiver $(Q, \sigma)$ together with a sign map $\epsilon : I \cup A \to \{ \pm 1\}$ such that $\epsilon_i \epsilon_{\sigma (i)}=\epsilon_\alpha \epsilon_{\sa}=1$.
\item A \emph{supermixed representation} is a representation $(V, \phi)$ of $Q$ such that $\phi_\sa = \epsilon_\alpha \phi_\alpha^*$.
\end{enumerate}
\end{supermixed}

As usual, we'll often loosen our tongue, and speak of the supermixed quiver $Q$, omitting the involution and sign map from the notation. The following result is proven in \cite{bocklandt}:

\begin{geometricsupermixed}
To every dualmod $(S,M)$ we can associate a supermixed quiver $Q$ together with a dimension vector $\underline{i}$ such that the supermixed representations of $Q$ are in bijective correspondence with the elements of the dualizing module $D(M)$.
\end{geometricsupermixed}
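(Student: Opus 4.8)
The plan is to mimic the structure of the Derksen–Weyman proof (Theorem \ref{derksenweyman}) carried out earlier in the excerpt, transposing it from the single-group setting ($\Or$ or $\Sp$) to the supermixed setting, where the sign map $\epsilon$ lets us handle both symmetries simultaneously and allows $S$ to be a product of several simple factors. The guiding principle is that a dualmod $(S,M)$ is, forgetting the anti-involutions, the same data as a plain quiver via the identification $(S,M)\simeq (\mathrm{M}(\underline{i}),\mathrm{Rep}(Q,\underline{i}))$ noted just before the statement; so the real content is to show that the anti-involutions $*$ on $S$ and on $M$ geometrize into the involution $\sigma$ and the sign map $\epsilon$ on the underlying plain quiver, and that the ``dualized'' elements of $M$ correspond precisely to supermixed representations.

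First I would recover the plain quiver and its dimension vector. Writing $S=\prod_k \mathrm{M}_{n_k}(\mathbb{C})$ as a product of matrix algebras, the minimal central idempotents $e_k$ index the vertices, and the bimodule decomposition $M=\bigoplus_{k,l} e_k M e_l$ gives the arrows, with $\dim e_k M e_l$ counting arrows from vertex $l$ to vertex $k$; this fixes $\underline{i}=(n_k)$. Next I would analyze the anti-involution $*:S\to S$. Since $*$ is an anti-automorphism of a semisimple algebra, it permutes the simple factors as an involution; this permutation defines $\sigma$ on the vertex set $I$. On a factor fixed by $*$, the restriction is an anti-involution of a matrix algebra, which up to conjugation is transpose with respect to either a symmetric or a skew-symmetric form (this is the standard classification of anti-involutions of central simple algebras); the \emph{type} of that form is exactly the sign $\epsilon_i\in\{\pm1\}$. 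On a pair of factors swapped by $*$, the two are mutually dual and one sets $\epsilon_i\epsilon_{\sigma(i)}=1$ automatically, matching the condition in Definition \ref{supermixed}. The compatibility $(amb)^*=b^*m^*a^*$ then forces $*:M\to M$ to intertwine $e_k M e_l$ with $e_{\sigma(l)} M e_{\sigma(k)}$, which is precisely the requirement $\sigma t(\alpha)=h\sigma(\alpha)$ for the induced arrow involution, and the sign picked up on each arrow orbit defines $\epsilon_\alpha$ with $\epsilon_\alpha\epsilon_{\sigma(\alpha)}=1$.

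With $(Q,\sigma,\epsilon)$ thus constructed, the correspondence between representations is the final step. The dualizing module $D(M)$ consists, by definition, of the $*$-self-dual elements of $M$; unpacking this through the idempotent decomposition, an element $\phi=\sum\phi_\alpha$ lies in $D(M)$ exactly when its components satisfy $\phi_{\sigma(\alpha)}=\epsilon_\alpha\phi_\alpha^*$, which is verbatim the supermixed condition $\phi_{\sigma(\alpha)}=\epsilon_\alpha\phi_\alpha^*$ of Definition \ref{supermixed}. So the map $\phi\mapsto(V,\phi)$ is the desired bijection, and its inverse simply reassembles a supermixed representation into a self-dual bimodule element.

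\textbf{The main obstacle} I anticipate is not the bookkeeping but the classification of the anti-involution $*$ on each simple factor, i.e.\ showing cleanly that every anti-involution of $\mathrm{M}_n(\mathbb{C})$ is, after an inner automorphism, transpose relative to a nondegenerate symmetric or skew form, and that the symmetric/skew dichotomy is a genuine invariant that cannot be altered by conjugation --- this is what makes the sign map $\epsilon$ well defined rather than a choice. Equally delicate is checking that the anti-linearity of $*$ (it is an \emph{anti-linear} involution in Definition \ref{dualizing}) is compatible with the complex-linear quiver picture; I would absorb this by choosing a $*$-invariant real structure on each factor so that $*$ becomes conjugate-transpose, after which the sign analysis proceeds as in the orthogonal/symplectic computation of Theorem \ref{derksenweyman}. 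Once these local normal forms are pinned down, the global assembly into $(Q,\sigma,\epsilon)$ and the identification $D(M)\leftrightarrow\{$supermixed representations$\}$ are routine, following the pattern already established for symmetric quivers.
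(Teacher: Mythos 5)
The paper itself contains no proof of this theorem: it is quoted from \cite{bocklandt} (``The following result is proven in...''), so the only internal model to compare against is the Derksen--Weyman argument for symmetric quivers, which you explicitly set out to mimic. Your skeleton is the right one, and it is essentially the argument of the cited source: the minimal central idempotents of $S\simeq\prod_k\mathrm{M}_{n_k}(\mathbb{C})$ give the vertices and the dimension vector; the simple-bimodule decomposition of $M$ gives the arrows; the anti-involution permutes the matrix factors, which defines $\sigma$ on vertices; on a $*$-fixed factor the classification of involutions of the first kind (conjugation-transpose with respect to a nondegenerate symmetric or alternating form, the type being a conjugation invariant) defines $\epsilon_i$; and compatibility transports all of this to $M$, identifying $D(M)$ with the supermixed representations.

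There are two soft spots, one of which would actually break the proof as written. The serious one is your treatment of ``anti-linearity.'' If $*$ were genuinely conjugate-linear, the theorem you are proving would be false: conjugate-linear anti-involutions of $\mathrm{M}_n(\mathbb{C})$ are classified by Hermitian forms, whose conjugation invariant is a signature rather than a sign; the dualizing group $D(S)$ would then be a unitary group $\U(p,q)$ rather than $\Or(n,\mathbb{C})$ or $\Sp(2n,\mathbb{C})$ --- contradicting the paper's subsequent theorem that $D(S)$ is complex reductive --- and no symmetric/skew dichotomy would survive to define $\epsilon_i$. So your proposed fix (choose a real structure so that $*$ becomes conjugate-transpose, then run ``the sign analysis as in the orthogonal/symplectic computation'') cannot work: after that reduction there is no sign analysis left to run. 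The correct move is to read the definition as a $\mathbb{C}$-linear, product-reversing involution (an anti-automorphism of order two), which is what Bocklandt uses and what the rest of the paper silently assumes; with that reading your main argument goes through. The second, smaller gap: $*$ maps $e_kMe_l$ to $e_{\sigma(l)}Me_{\sigma(k)}$, but these blocks are isotypic, not simple --- note $\dim e_kMe_l$ equals $n_kn_l$ times the number of arrows, not the number of arrows --- and when a block is $*$-stable the involution need not preserve any given decomposition into arrow summands. You must first choose the decomposition compatibly with $*$ (diagonalize the induced involution on the multiplicity space); only then are the individual arrows, the involution $\sigma$ on them, and the signs $\epsilon_\alpha$ actually defined. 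That choice is exactly what is hidden in your phrase ``the sign picked up on each arrow orbit.''
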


\begin{pathalgebra}
As is well known, and as the proof above will easily recall, quiver representations can also be characterized as modules of a certain algebra associated with $Q$, called the \emph{path algebra} $\mathbb{C}Q$. In fact, the path algebra is a universal example of the semisimple algebras we have used in proof, and indeed it does \emph{not} come with an associated dimension vector. It is true that we could impose a `universal dualizing structure' on $\mathbb{C}Q$, but this algebra is generally infinite dimensional, and so the dualizing group so determined is, in principle, infinite dimensional as well. It is not presently clear what can be said in general about a generalized quiver associated with such a group. (In fact, these comments apply as well to plain quivers; it is because we want to deal with finite dimensional groups that generalized quivers already determine a dimension vector.)
\end{pathalgebra}

Let $(S,M)$ be a dualmod. Then, the dualizing group of $S$ is the group
\begin{equation*}
D(S)=\{ g\in S \phantom{.}| \phantom{.} g^*g=gg^*=1\}
\end{equation*}
and the dualizing submodule of $M$ is the module of self-dual elements of $M$:
\begin{equation*}
D(M)=\{ m \in M \phantom{.}| \phantom{.} v^*=v \}
\end{equation*}
The main point of these definitions is the following theorem.

\begin{supermixedgeneralized}
The dualizing group $D:=D(S)$ is a complex reductive (in fact, semisimple) Lie group. Let $1=\sum e_i$ be a maximal decomposition by orthogonal idempotents, and $H^0:=\mathbb{C}\{e_i\}$ be the subspace generated by them. Then, the intersection $H=H^0 \cap D$ together with the dualizing module $D(M)$ defines a generalized $D$-quiver setting.
\end{supermixedgeneralized}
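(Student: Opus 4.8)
The plan is to mimic the proof of the Derksen--Weyman theorem, using the Wedderburn structure of $S$ to make the anti-involution $*$ explicit and then reading off $D$, the subgroup $H$, and the $R$-action on $D(M)$ one block at a time.

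First I would pin down $D$. By the Wedderburn--Artin theorem $S\cong\bigoplus_k M_{n_k}(\mathbb{C})$, so its unit group $S^\times\cong\prod_k\GL_{n_k}(\mathbb{C})$ is complex reductive. The compatibility $(amb)^*=b^*m^*a^*$ says that $*$ is an order-reversing involution, so $\theta(g)=(g^*)^{-1}$ is an involutive \emph{automorphism} of $S^\times$, and by definition $D=D(S)=(S^\times)^\theta$ is its fixed-point subgroup; since the fixed points of an involution of a complex reductive group again form a complex reductive group, $D$ is reductive. To see this concretely, and to prepare the later steps, I would describe $\theta$ blockwise: $*$ permutes the simple summands of $S$, a summand fixed by $*$ contributing an orthogonal or a symplectic group (according to whether the induced form is symmetric or alternating, i.e.\ according to the sign $\epsilon$ at the corresponding vertex), while a pair of summands exchanged by $*$ contributes a single general-linear factor. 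On each orthogonal or symplectic factor the only central scalars $\lambda e_k$ satisfy $\lambda^2=1$, so there the centre of $D$ is finite; thus $D$ is reductive, and semisimple once $*$ fixes every simple summand, the exchanged summands being the only source of a central torus.

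Next I would check that $H=H^0\cap D$ satisfies the axioms of Definition~\ref{generalizedquiver}. The idempotents commute, so $H^0=\mathbb{C}\{e_i\}\cong\mathbb{C}^{\#\{e_i\}}$ is a commutative semisimple subalgebra and $(H^0)^\times$ is an algebraic torus inside $S^\times$; hence $H=(H^0)^\times\cap D$, being a Zariski-closed subgroup of a torus, is closed, abelian and diagonalisable, in particular reductive. Its centraliser $R=Z_D(H)$ is then automatically the reductive group required by the definition, and it is the subgroup of $D$ respecting the block decomposition cut out by the $e_i$, exactly as the Levi $R$ of the Derksen--Weyman picture in~(\ref{groupiso}). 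The dualizing module $D(M)$ carries an action of $D$ (hence of $R$) by $g\cdot m=gmg^{-1}=gmg^*$, the two expressions agreeing because $g^*=g^{-1}$ on $D$; this preserves self-duality, since $(gmg^*)^*=g\,m^*\,g^*=gmg^*$ whenever $m^*=m$, so $D(M)$ is a finite-dimensional $R$-representation. By Theorem~\ref{geometricsupermixed} it is identified with the space of supermixed representations of the associated quiver.

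The crux, and the step I expect to be the main obstacle, is the last requirement of Definition~\ref{generalizedquiver}: that every irreducible $R$-constituent of $D(M)$ occur in $\mathfrak{d}=\mathrm{Lie}(D)=\{x\in S:x^*=-x\}$, with the trivial constituent absent. The point is that $\mathfrak{d}$ and $D(M)$ are assembled from the \emph{same} elementary bricks: decomposing $M$ as in Bocklandt's construction into $\Hom$-spaces between the simple $S$-modules $V_i$, $V_i^*$, $W_i$, the compatibility $(amb)^*=b^*m^*a^*$ forces $*_M$ (which carries the signs $\epsilon_\alpha$) to pair up and cut out of $D(M)$ precisely the summands $\Hom(V_i,V_j)$, $\Lambda^2 V_i$ or $S^2 V_i$, and $\mathfrak{o}(W_i)$ or $\mathfrak{sp}(W_i)$, which are exactly the $\mathrm{Ad}\,R$-summands that the same Wedderburn decomposition produces in $\mathfrak{d}$. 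Here the signs are essential: they are what guarantee that a $\sigma$-fixed vertex of orthogonal (resp.\ symplectic) type contributes the skew block $\mathfrak{o}(W_i)$ (resp.\ the symmetric block $\mathfrak{sp}(W_i)$) rather than its complement, matching the diagonal block of $\mathfrak{d}$. Concretely this is the explicit summand-by-summand bookkeeping of the Derksen--Weyman theorem applied to each factor of $D$, together with its evident symplectic counterpart; the absence of the trivial representation follows because a dualmod carries no trivial summand, central scalars having been absorbed into $H$. Assembling these blockwise identifications exhibits $(H,R,D(M))$ as a generalized $D$-quiver setting.
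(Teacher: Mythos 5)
The paper itself offers no proof of this theorem --- it is stated bare, as a repackaging of Bocklandt's cited results --- so your proposal can only be judged on its own merits. Its first two thirds are sound: $D$ is the fixed-point group of the holomorphic involution $g\mapsto (g^*)^{-1}$ of $S^\times$, hence complex reductive; the blockwise description (orthogonal or symplectic factors at $*$-fixed blocks, $\GL$-factors at exchanged pairs) is correct; and your observation that exchanged blocks produce central tori, so that $D$ is \emph{not} semisimple in general, is right --- it in fact contradicts the parenthetical claim in the statement. The verification that $H$ is closed, abelian and reductive, and that conjugation preserves $D(M)$, is also fine.

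The crux step, however, is exactly where the argument fails, and the failure cannot be repaired by more careful bookkeeping. You assert that the irreducible $\mathrm{Ad}\,R$-constituents of $D(M)$ and of $\mathfrak{d}$ are ``the same elementary bricks.'' They are not. First, $\mathfrak{d}=\bigoplus_k \mathfrak{d}_k$ is the direct sum of the Lie algebras of the factors of $D$, and $R=Z_D(H)$ is itself a product $\prod_k R_k$ with $R_k\subseteq D_k$, so every irreducible constituent of $\mathfrak{d}$ carries a nontrivial action of at most one factor $R_k$; but any arrow of $Q$ joining two distinct $\sigma$-orbits of vertices contributes to $D(M)$ an irreducible summand of the form $\Hom(W_i,W_j)$ on which two distinct factors act nontrivially. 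Such a summand never occurs in $\mathfrak{d}$. (In the symmetric case this is precisely why Derksen--Weyman take the ambient group to be the orthogonal group of the \emph{total} space: the pieces $W_{ij}=\Hom(W_i,W_j)$ occur in $\mathfrak{o}(V_\Sigma)$, not in $\mathfrak{o}(W_i)\oplus\mathfrak{o}(W_j)$; the group $D(S)$ is the analogue of $R=\Or(\underline{i})$, not of the ambient group $G$.) Second, your claim that ``the signs guarantee'' that a fixed loop contributes the skew block $\mathfrak{o}(W_i)$ is backwards: the sign of a dualmod summand is a free parameter, and a loop with the symmetric sign at an orthogonal vertex (allowed --- compare the $h_\alpha=S^+$ loops at $\Or$-vertices in the paper's definition of $\mathfrak{Q}$-mixed settings) contributes $S^2 W_i$, which is not a constituent of $\mathfrak{o}(W_i)$ and which even contains the $R$-fixed vector given by the quadratic form, so the trivial representation can occur in $D(M)$, contrary to your final assertion. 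What your attempt actually exposes is that the statement, read against the paper's own definition of generalized quiver, is too strong: either the ambient group must be taken strictly larger than $D$ (which for genuinely mixed data, such as an arrow joining an orthogonal vertex to a symplectic one, is impossible within reductive groups --- the natural receptacle of $\mathfrak{so}\oplus\mathfrak{sp}$ plus such a mixed piece is the orthosymplectic Lie \emph{super}algebra), or the requirement that the constituents of the representation space occur in $\mathfrak{g}$ must be relaxed.
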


It now makes sense to make the following definition.
\begin{generalizedsupermixed}
Let $(S,M)$ be a dualmod, and $D=D(S)$ and $D(M)$ the associated dualizing group and module, respectively. Then, a generalized supermixed quiver is the generalized $D$-quiver determined by the module $D(M)$.
\end{generalizedsupermixed}

\subsection{Supermixed quiver bundles}

The geometric construction of a supermixed quivers makes quite explicit the similarities with the orthogonal representations of symmetric quivers from the previous section. We can define supermixed bundles in the obvious way, using the general definitions of section \ref{generalized}.

The work in the finite dimensional case is already enough for the next correspondence.
\begin{correspondencesupermixed}
Let $\tilde{Q}$ be a generalized supermixed quiver, and $Q$ be the corresponding supermixed quiver. Then, there is an equivariant bijection between twisted $\tilde{Q}$-bundles and supermixed twisted bundle representations of $Q$.
\end{correspondencesupermixed}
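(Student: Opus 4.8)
The plan is to mirror exactly the strategy used for the orthogonal case in Lemma \ref{genortheq}, since the supermixed setting is designed to be a sign-decorated version of the symmetric quiver construction. The key input is the finite-dimensional geometric correspondence of Theorem \ref{geometricsupermixed} (together with Theorem \ref{supermixedgeneralized}), which produces, for a generalized supermixed quiver $\tilde{Q}=(H,R,\mathrm{Rep}(\tilde{Q}))$, an $R$-equivariant identification between $\mathrm{Rep}(\tilde{Q})$ and the space of supermixed representations of the associated supermixed quiver $Q$. One first observes that the dualizing group $D=D(S)$ plays the role of the ambient reductive group, and its centralizer $R$ is precisely the structure group of the supermixed $Q$-bundles, just as $\Or(\underline{i})$ was the structure group in the orthogonal case.

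First I would upgrade the finite-dimensional bijection to the twisted setting: writing $\mathrm{Rep}(\tilde{Q})=\bigoplus Z_\alpha$ into irreducible $R$-summands, the twisting prescription of Definition \ref{twistedgenquiver} attaches a vector space $M_\alpha$ to each $Z_\alpha$, and the sign condition $\epsilon_\alpha\epsilon_{\sa}=1$ forces $M_\alpha=M_{\sa}$, exactly as the transpose computation preceding Definition \ref{orthbundle} forced the same identification in the orthogonal case. This gives an $R$-equivariant isomorphism $f:\mathrm{Rep}(\tilde{Q},M)\to \mathrm{Rep}^{sm}(Q,M,V)$ onto the space of supermixed twisted representations, where the sign map $\epsilon$ selects, summand by summand, whether the relevant self-duality condition $\phi_{\sa}=\epsilon_\alpha\phi_\alpha^*$ is symmetric or skew — but this only changes the meaning of the symbols, not the structure of the argument.

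Next I would globalize exactly as in the proof of Lemma \ref{genortheq}. Given the principal $R$-bundle $E^\mathbb{C}$ and the twisting bundle $F^\mathbb{C}$, form the product map $\mathrm{id}\times f:(E^\mathbb{C}\times F^\mathbb{C})\times \mathrm{Rep}(\tilde{Q},M)\to (E^\mathbb{C}\times F^\mathbb{C})\times \mathrm{Rep}^{sm}(Q,M,V)$. Because $f$ is $R$-equivariant, this descends to an isomorphism of the associated fibered products, and hence to a bijection of global sections $\Omega^0((E^\mathbb{C}\times F^\mathbb{C})\times_\rho \mathrm{Rep}(\tilde{Q},M))\cong \Omega^0((E^\mathbb{C}\times F^\mathbb{C})\times_\rho \mathrm{Rep}^{sm}(Q,M,V))$. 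The equivariance assertion of the theorem follows immediately, since both sides carry compatible actions of the gauge group of $E^\mathbb{C}$.

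The main obstacle is not the globalization — which is formally identical to the orthogonal case — but verifying cleanly that the decomposition into irreducible $R$-summands of $\mathfrak{d}=\mathrm{Lie}\,D$ matches the arrow-by-arrow data of the supermixed quiver with the correct signs, i.e.\ that under $f$ the self-duality built into $D(M)$ translates precisely into the pointwise condition $\phi_{\sa}=\epsilon_\alpha\phi_\alpha^*$ on the bundle sections. I would handle this by reducing to Bocklandt's finite-dimensional classification (Theorem \ref{geometricsupermixed}) fibrewise, noting that the idempotent decomposition $1=\sum e_i$ of Theorem \ref{supermixedgeneralized} gives the vertices and the $*$-involution gives both $\sigma$ and $\epsilon$; once the fibrewise identification respects the $R$-action, the passage to sections is automatic and the theorem is proved.
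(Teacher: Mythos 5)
Your proposal is correct and takes essentially the same approach as the paper: the paper's entire proof is the remark that ``the proof goes exactly like the one for symmetric quivers,'' i.e.\ precisely your strategy of taking the $R$-equivariant finite-dimensional bijection supplied by Bocklandt's theorem and letting $\mathrm{id}\times f$ descend to the fibered products as in Lemma~\ref{genortheq}. Your extra fibrewise verification that the self-duality in $D(M)$ matches the sign condition $\phi_{\sigma(\alpha)}=\epsilon_\alpha\phi_\alpha^*$ is exactly the content the paper delegates to the finite-dimensional theorems, so nothing is missing.
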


The proof goes exectly like the one for symmetric quivers. It is also straightforward to generalize the other results, and we assemble them here for reference.

\begin{supermixedbundle}
Let $\tilde{Q}$ be a generalized supermixed quiver.
\begin{enumerate}
\item The space of bundle representations of $\tilde{Q}$ embeds as a subspace of the representation space of the underlying quiver $Q$. Further, for a particular choice of stability parameters, this embedding is symplectic (i.e., the equivariant bijection in the previous theorem is a symplectomorphism.)
\item A representation is  stable if and only if it is slope semistable, where slope stability is also defined in terms of isotropic subsheaves.
\item A representation is semistable as a supermixed representation if and only if it is semistable as a plain representation.
\item If the representation $(\mathbb{V}, \phi)$ solves the gauge equations, it has a decomposition
\begin{equation*}
(\mathbb{V}, \phi)= \bigoplus (\mathcal{F}, \phi) \oplus \bigoplus ((\mathcal{E}_i, \phi) \oplus (\mathcal{E}^*_i, \phi)) \oplus \bigoplus ((\mathcal{S}_i, \phi) \oplus (\mathcal{S}^*_i, \phi))
\end{equation*}
where $(\mathcal{F}, \phi)$ are orthogonal representations, $(\mathcal{S}_i, \phi)$ a stable plain representations isomorphic to their duals such that $(\mathcal{S}_i, \phi) \oplus (\mathcal{S}^*_i, \phi)$ are symplectic representations, $(\mathcal{E}_i, \phi)$ stable plain representations not isomorphic to their dual, and $(\mathcal{E}_i, \phi) \oplus (\mathcal{E}^*_i, \phi)$ is an orthogonal representation with the standard orthogonal pairing of a space with its dual.
\end{enumerate}
\end{supermixedbundle}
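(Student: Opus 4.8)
The plan is to prove each of the four assertions by reducing it, in turn, to the already-established results for orthogonal (resp. symplectic) representations of symmetric quivers in Section \ref{orthreps}. The crucial observation — which is exactly the content of Theorem \ref{geometricsupermixed} and Theorem \ref{supermixedgeneralized} — is that a generalized supermixed quiver is assembled from the same building blocks as the orthogonal and symplectic cases, with the sign map $\epsilon$ merely recording, arrow by arrow and vertex by vertex, whether a given irreducible piece carries a symmetric or an anti-symmetric form. Since all the proofs in Section \ref{orthreps} were deliberately written to apply verbatim to both the orthogonal and symplectic cases (switching only the meaning of the transpose), the strategy throughout is to isolate the local contribution of each vertex-orbit and each arrow-orbit, apply the corresponding symmetric-quiver result there, and reassemble.

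First I would dispose of (1) and (2). For (1), the dualizing group $D=D(S)$ is reductive by Theorem \ref{supermixedgeneralized}, its maximal compact splits as a product of unitary, orthogonal, and symplectic factors indexed by the vertex-orbits, and the irreducible summands of $D(M)$ sit inside $\mathfrak{d}$; the embedding of the bundle representation space into the plain representation space and its invariance are then established exactly as in Lemma \ref{orthsubspace}, with the involution on $\Omega^0(\mathfrak{Rep}(Q,V))$ now sending $\phi_\alpha$ to $-\epsilon_\alpha \phi_{\sa}^t$. The symplectic assertion is Theorem \ref{symplectomorphism} applied factor-by-factor, choosing parameters $p_i+p_{\sigma(i)}=a_i$ on interchanged orbits and $p_i=b_i$ on fixed ones; the sign $\epsilon_\alpha$ does not affect the computation of Atiyah-Bott forms, so the parameter-matching goes through unchanged. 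For (2), I would note that parabolic reductions of $D$ still induce isotropic filtrations, that Theorem \ref{morphismsfactororth} shows the morphisms restrict to each subsheaf in the filtration, and that the slope computation in Proposition \ref{equivalencestability} — collapsing the filtration using $\deg \mathbb{W}^\perp=\deg\mathbb{W}$ and pairing the $k$-th step against the $(r-k)$-th — is insensitive to whether the relevant form is symmetric or anti-symmetric. Hence stability is equivalent to slope semistability defined via isotropic subsheaves.

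For (3) I would follow the proof of part (1) of Theorem \ref{orthplainrelation} essentially word for word: given an arbitrary subrepresentation $\mathcal{F}$, set $\mathcal{E}=\mathcal{F}\cap\mathcal{F}^\perp$, use the short exact sequence $0\to\mathcal{E}\to\mathcal{F}\oplus\mathcal{F}^\perp\to\mathcal{E}^\perp\to 0$ together with the supermixed condition $\phi_{\sa}=\epsilon_\alpha\phi_\alpha^*$ to force the off-diagonal block $\beta$ to vanish (the sign $\epsilon_\alpha$ does not obstruct this, since the vanishing argument only uses that $\phi_{\sa}$ is, up to sign, the transpose of $\phi_\alpha$), and conclude $\deg^0_{a,\tau}(\mathcal{F})\le 0$ from the isotropic case. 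The harder assertion is (4), the polystable decomposition, and this is where I expect the main obstacle to lie. The argument begins as before: a solution of the supermixed gauge equations solves the plain gauge equations (the supermixed equations being the projection of the plain ones onto $\mathfrak{d}$), so $(\mathbb{V},\phi)$ is a direct sum of stable plain subrepresentations; one then sorts each stable summand according to whether it is orthogonal on its own, isotropic and paired with a dual copy, or self-dual.

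The genuine subtlety — and the reason this requires more than a verbatim copy of Lemma \ref{psdecomp} — is that for a stable plain summand $\mathcal{S}_i$ isomorphic to its own dual, the induced pairing may be symmetric or anti-symmetric, and the sign map $\epsilon$ decides which. Concretely, when $\mathcal{S}_i\simeq\mathcal{S}_i^*$ one obtains, exactly via the construction of Remark \ref{riemanncase}, an identification $(\mathcal{S}_i,\phi)\oplus(\mathcal{S}_i^*,\phi)\simeq(\mathcal{S}_i\otimes\mathbb{C}^2,\overline\phi)$ on which the inherited form is skew-symmetric and $\overline\phi$ is alternating with respect to it; the point to verify carefully is that the sign data $\epsilon_\alpha$ is precisely what guarantees the resulting $\mathcal{S}_i$-summand is a genuine symplectic representation of the symmetric quiver rather than an orthogonal one, so that the three families of summands in the stated decomposition (orthogonal $\mathcal{F}$, dual-paired $\mathcal{E}_i\oplus\mathcal{E}_i^*$ with the standard orthogonal pairing, and symplectic $\mathcal{S}_i\oplus\mathcal{S}_i^*$) really are forced by $\epsilon$ and occur with the claimed symmetry types. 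I would handle this by tracking, for each orbit, the value of $\epsilon$ on the arrows incident to it and checking that the alternating condition $\phi_{\sa}=\epsilon_\alpha\phi_\alpha^*$ is compatible with exactly one of the two form-types; the reductivity and semisimplicity of $D$ then ensure no other summand types arise. The remaining mutual-non-isomorphism and uniqueness statements follow from the simplicity of stable plain representations, as in Corollary \ref{simpleorthstable}.
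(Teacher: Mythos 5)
Your proposal is correct and takes essentially the same approach as the paper: the paper offers no separate argument for this theorem, stating only that ``the proof goes exactly like the one for symmetric quivers'' and that the other results generalize straightforwardly, which is precisely the case-by-case reduction to the results of Section \ref{orthreps} (Lemma \ref{genortheq}, Theorem \ref{orthplainrelation}, Lemma \ref{psdecomp}, and the surrounding material) that you carry out. If anything, you supply more detail than the paper does, in particular by isolating where the sign map $\epsilon$ enters (the symmetric-versus-antisymmetric dichotomy for self-dual stable summands in part (4)), which is exactly the point the paper leaves implicit.
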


\section{Further Examples}\label{examples}
The goal of the last section was to show that generalized quivers actually yield down-to-earth objects in concrete cases, despite giving an easier setting for moduli problems. We did this by carefully studying the orthogonal and symplectic case. In this section, we quickly mention a few further examples to reinforce the point.

\subsection{Higgs bundles over Riemann surfaces}
A Higgs bundle over a Riemann surface is the case of a single vertex and a single morphism. These provide the simplest examples of the theory, and indeed our observations about twistings apply directly to this case. In all instances, the twisting bundle is $\mathbb{M}=\mathbb{K}$, the canonical bundle of $X$.

Let first $G= \GL (n, \mathbb{C})$, and $H=Z(G)=\left\{ \lambda \mathrm{id} | \lambda \in \mathbb{C}^* \right\}$ be the torus of constant-diagonal matrices. The centralizer of $H$ in $G$ is then all of $G$. Under the action of $G$, the Lie algebra $\mathfrak{g}$ is an irreducible module, and so, necessarily $\mathrm{Rep}(Q,V)=\mathfrak{g}^{\oplus n}$; for the Higgs bundle, we take $n=1$. Since our representation space only has one irreducible component, a representation is especially simple: the vertex symmetry group and the twisting group act separately on the morphism and twisting vector spaces. Then, we must choose a principal $\GL (n,\mathbb{C})$-bundle $E^\mathbb{C}$, and the space of generalized quiver bundles for such a choice of $H$ is
\begin{equation*}
\mathfrak{Rep}(\tilde{Q}, \mathfrak{g})=\mathrm{Ad}(E)\otimes \mathbb{K}
\end{equation*}
Specializing to unitary/hermitian case, we need a $\U (n)$-bundle $E$, and a section $\phi \in (\mathrm{Ad}(E)\otimes \mathbb{K})$. We almost trivially recover the classical case: under the standard representation $\GL (n,\mathbb{C})$ on $\mathbb{C}^n$, $\mathfrak{g}$ identifies with $\End (\mathbb{C}^n)$, and $\mathrm{Ad}(E)$ identifies with $\End (\mathbb{V})$ for some vector bundle $\mathbb{V}$. Then,
\begin{equation*}
\mathfrak{Rep}(\tilde{Q}, \mathfrak{g})=\End (\mathbb{V})\otimes \mathbb{K}=\Hom (\mathbb{V}, \mathbb{V}\otimes \mathbb{K})
\end{equation*}
i.e., a representation of the arrow is just a Higgs field $\phi : \mathbb{V} \to \mathbb{V}\otimes \mathbb{K}$.

The stability for this special case was originally studied by Hitchin in the rank 2 in \cite{hitchin}. As we've mentioned, it is precisely on Riemann surfaces that the stability condition simplifies to a slope condition without extending the theory to coherent sheaves. In fact, defining $\mu =\deg (\mathbb{V})/\rk (\mathbb{V})$ for any vector bundle $\mathbb{V}$, Hitchin found that the correct semistability condition is that $\mu (\mathbb{E}) \leq \mu (\mathbb{V})$ for every proper subbundle $0\neq \mathbb{E} \subset \mathbb{V}$ that is $\phi$ invariant in the sense that $\phi (\mathbb{E})\subset \mathbb{E}\otimes \mathbb{K}$, stability corresponding to a strict inequality. General polystability can then be described as a splitting into a sum of non-isomorphic stable bundles of the same slope.

The case for closed linear groups $G\subset \GL (n, \mathbb{C})$ follows readily in an analogous manner. We take $H=Z(G)=Z(\GL(n, \mathbb{C})) \cap G$, finding that the centralizer of $H$ will again be the whole of $G$, and obviously $\mathfrak{g}$ is irreducible as an $\mathrm{Ad}G$-module. We find that under the standard representation, the morphism representation is a section $\phi \in (E\times_\mathrm{Ad} \mathfrak{g})\otimes \mathbb{K}$. These have been amply studied.

We note that when the base manifold is of higher dimension (i.e., $X$ is not a curve,) Simpson established the stability conditions for Higgs bundles. However, on higher dimensional base manifolds one imposes an integrability condition on the Higgs field. Such integrability condition is required for the non-abelian Hodge theorem to hold, but it is unnatural from the point of view of quiver bundles, which we're taking here.

\subsection{G-Higgs bundles}

Let $G$ be a real reductive Lie group, and $K$ a maximal compact, with Lie algebras $\mathfrak{g}$ and $\mathfrak{k}$, respectively. \emph{$G$-Higgs bundles} are generalizations of the previous example that have been the object of intense study which encompass the real case; we hope to explore the case of real Lie groups in a later stage.    Note that $\mathfrak{g}$ has a Cartan decomposition $\mathfrak{g}=\mathfrak{k}\oplus \mathfrak{m}$, where $\mathfrak{m}$ is an $\mathrm{Ad}\phantom{.}K$-module (this is the isotropy representation.) A $G$-Higgs bundle is a $K^\mathbb{C}$-principal bundle $E$ together with an element $\phi \in E(\mathfrak{m}^\mathbb{C}$.) Suppose now that $G$ has a complexification. Then, $\mathfrak{m}^\mathbb{C}$ sits naturally inside $\mathfrak{g}^\mathbb{C}$ as an $\mathrm{Ad}\phantom{.}K^\mathbb{C}$-module. In this setting, this clearly defines a generalized $G$-quiver.

Note that in the definition of generalized quiver, we could prefectly well have take $G$ to be a real reductive group, and thus included a general $G$-Higgs bundle (i.e., one for which the complexification of $G$ doesn't necessarily exist.) In fact, in that situation, both $\mathfrak{g}$ and $G$ come with a (global) Cartan decomposition $\mathfrak{g}=\mathfrak{k}\oplus \mathfrak{m}$, and $G=K\exp (\mathfrak{m})$. Note that the first decomposition is \emph{not} a decomposition of Lie algebras, but a decomposition of $\mathrm{ad} \mathfrak{h}$- (or $\mathrm{Ad}H$-) modules. Now, $K$ is the centralizer in $G$ of its center, i.e., $Z_G (Z(K))=K$.) Also, $\mathfrak{m}^\mathbb{C}$, as a $\mathrm{Ad} K$-module decomposes into two pieces isomorphic to $\mathfrak{m}\subset \mathfrak{g}$. Therefore, it makes sense to define the following generalized $G$-quiver bundle: $(Z(K), K, \mathfrak{m}^\mathbb{C})$. 

\subsection{$\mathfrak{Q}$-mixed quivers}

Generalizations of symmetric and supermixed quivers have already been studied in the finite dimensional case, from an algebraic point of view. In \cite{mixed}, Lopatin and Zubkov introduce a unifying notion of $\mathfrak{Q}$-mixed quivers. Essentially, these quivers include more general symmetries by explicitly constructing geometric instances of generalized quivers for very particular choices of the reductive abelian group $H$.

We first fix some notation. We'll denote by $J$ the standard sympletic form in $\mathbb{C}^{2n}$, that is,
\begin{equation*}
J=\left(
\begin{array}{cc}
0 & I_n \\
-I_n & 0
\end{array}
\right)
\end{equation*}
Keeping this in mind, we define:
\begin{itemize}
\item $S^+(n):=\{A\in \GL(n)|A^t=A\}$
\item $S^-(n):=\{A\in \GL(n)|A^t=-A\}$
\item $L^+(n):=\{A\in \GL(n)|AJ\in S^+\}$
\item $L^-(n):=\{A\in \GL(n)|AJ\in S^-\}$
\end{itemize}
These are the Lie algebras of the orthogonal and symplectic groups and their complements.

\begin{mixed}
A \emph{mixed quiver setting} is a quintuple $\mathfrak{Q}=(Q,\mathbf{n},\mathbf{g},\mathbf{h},\sigma)$ where $Q$ is a quiver, $\mathbf{n}$ is a dimension vector for $Q$, $\mathbf{g}=(g_i)$ is a symbol sequence indexed by the vertices of $Q$ with $g_i\in \{ \GL,\Or, \SO, \Sp, \SL \}$, $\mathbf{h}=(h_\alpha)$ are symbols indexed by the arrows of $Q$ with $h_\alpha \in \{ M, S^+, S^-, L^+, L^- \}$, and $\sigma$ is an involution on the sets of vertices and arrows. These are subject to the conditions:
\begin{enumerate}
\item if $g_i=\Sp$, then $n_i$ is even;
\item if $h_\alpha \neq M$, then $n_{\ta}=n_{\ha}$;
\item if $\alpha$ is a loop and $h_\alpha=S^+$ or $S^-$, then $g_{\ta}=\Or$ or $\SO$;
\item if $\alpha$ is a loop and $h_\alpha=L^+$ or $L^-$, then $g_{\ta}=\Sp$.
\item $n_{\sigma(i)}=n_i$;
\item if $g_i=\Or, \SO, \Sp$, then $\sigma{i}=i$;
\item if $\alpha$ is not a loop and $h_\alpha\neq M$, then $\sigma (\ta)=\ha$ and $h_\alpha=S^+$ or $S^-$.
\end{enumerate}
\end{mixed}

The following definition, though not quite the definition in \cite{mixed}, is its geometric interpretation.

\begin{qmixedrep}
Let $\mathfrak{Q}$ be a mixed quiver setting. A \emph{$\mathfrak{Q}$-mixed representation} is a representation $(V,\phi)$ of $Q$, where $V_i=\mathbb{C}^{n_i}$, with the additional data:
\begin{enumerate}
\item $V_{\sigma{i}}=V_i^*$;
\item if $g_i=\Or$ or $\SO$, then $V_i$ comes with the standard orthogonal form, and if $g_i=\Sp$, the $V_i$ comes with the standard symplectic form;
\item if $i\neq \sigma (i)$, Then $V_i\oplus V_{\sigma(i)}$ comes with the standard orthogonal pairing;
\item if $g_i=\SL$ or $\SO$, then $V_i$ comes with a volume form;
\item if $h_\alpha =M$, $S^+$, $S^-$, $L^+$, or $L^-$, then with respect to the previous conditions, $\phi_\alpha \in M(n_\alpha)$, $S^+(n_\alpha)$, $S^-(n_\alpha)$, $L^+(n_\alpha)$, or $L^-(n_\alpha)$, respectively.
\end{enumerate}
\end{qmixedrep}

The reduction to the generalized quiver setting is straightforward if we note the following: a $\mathfrak{Q}$-mixed representation is identified by an element of $H(\mathbf{n},\mathbf{h})=\bigoplus H_\alpha$, where $H_\alpha =M(n_\alpha)$, $S^+(n_\alpha)$, $S^-(n_\alpha)$, $L^+(n_\alpha)$, or $L^-(n_\alpha)$ according to whether $h_\alpha =M$, $S^+$, $S^-$, $L^+$, or $L^-$, respectively; and the symmetry group for such a representation is $G(\mathbf{n},\mathbf{g})=\prod G_i$, where $G_i=\GL(n_i)$, $\SL(n_i)$, $\Or(n_i)$, $\SO(n_i)$, or $\Sp(n_i)$ according to whether $g_i=\GL$, $\SL$, $\Or$, $\SO$, or $\Sp$, respectively.

Note that these representations include both plain representations, as well as orthgonal and symplectic representations of symmetric quivers. Signed quivers correspond to the case where $g_i=\GL, \Or, \Sp$ for all $i$; mixed quivers correspond to $g_i=\GL$ and $h_\alpha=M$ for all vertices $i$ and arrows $\alpha$. Note also that the full generality of the inclusion of $\SO(n)$ and $\SL(n)$ is not used, from the point of view of generalized quivers, in the sense that we do not allow for a general choice of one of their abelian subgroups $H$ (see the next example.) This is due mostly to the fact that Lopatin-Zubkov are interested in their role in yielding \emph{semi-invariants} of representations of $\Or (n)$ and $\GL (n)$, respectively.

\subsection{`Traceless quivers'}

Symmetric quivers are atypical in the sense that they give a full characterization of orthogonal generalized quivers. It is much harder to characterize generalized quivers for a more general group (that is, a result describing geometrically the generalized quiver for any choice of reductive abelian subgroup.) As we've mentioned, this full generality is not even used for the case $\SL(n)$, and we want to use these to exemplify the complexity of the situation.

Let $H$ be an abelian reductive subgroup of $\SL(V)$. As in the orthogonal case, we can decompose $V$ in isotypic components $V=\bigoplus V_\mu$ where $\mu$ is a character of $H$. Noting that we can find a basis of $V$ with respect to this splitting, and denoting by $\omega$ the volume form of $V$ determined by the choice of a special linear group, we have
\begin{equation}
\omega (g e_1,...,ge_n)=\prod_i \mu_i^{n_i} \omega (e_1,...,e_n)
\end{equation}
where the $\mu_i$ are the distinct characters in the representation, and $n_i$ the respective multiplicity. These characters must then satisfy $\prod \mu_o^{n_i}=1$. This condition is much weaker than the condition for the orthogonal group, and accounts for the difficulty of completely characterizing this case.

One possibility is that for every character $\mu_i$, we have $\mu_i^{n_i}=1$. In that case, it is easy to geometrically interpret the generalized quiver: draw one vertex for every character in the representation; let $\mathbf{n}=(n_i)$ be the dimension vector (where $n_i$ are the multiplicities.) The Lie algebra $\mathfrak{sl}(V)$ splits into summands $\Hom (V_i,V_j)$ if $i\neq j$, and $\End_0(V_i)$ (traceless endomorphisms. Thus, the representation space in the definition of generalized quiver can be interpreted in terms of arrows between the vertices just drawn.

It is easy to see that the previous situation must not hold. Inside $\SL(2)$, take $H$ to be the group of all matrices of the form
\begin{equation*}
\left(
\begin{array}{cc}
\lambda & 0\\
0 & \lambda^{-1}
\end{array}
\right)
\end{equation*}
Then, $H$ is its own centralizer, so that $R=H$, and under the action of $H$, $\mathfrak{sl}_n$ splits as above. We can therefore still interpret the generalized quiver geometrically as a quiver with two vertices, but the group of symmetries now pairs the two vertices together (the bundles are dual line bundles.)

\section{The Hitchin-Kobayashi correspondence}\label{hk}
Our version is essentially of the same kind as various general Hitchin-Kobayashi correspondences in the literature: \cite{bradlow}, \cite{banfield}, \cite{mundet}, \cite{bgm}, \cite{lt} (with increasing generality.) Unfortunately, none of these cover our case because of the presence of the parameters in the moment map for the connections part. There's also \cite{ag}, which covers precisely the case of quiver bundles, but only for a very particular choice of gauge group. Still, with the exception of \cite{lt}, the proof of our correspondence is essentially the same as all these articles. In fact, our proof will be very cursory, mostly for the purposes of outline, since it is a straightforward adaptation of previous proofs.

\subsection{The correspondence}

Let $X$ be a compact K\"ahler manifold, $K$ a compact Lie group splitting as finite product $K=\prod K_i$, and $E$ a holomorphic principal $K$ bundle. Given a K\"ahler manifold $F$ with a Hamiltonian left $K$ action $\sigma$, we can form the fibration $E(F)=E\times_\sigma F$.

\subsubsection{The gauge equations} We are interested in the space $\mathscr{A}^{1,1}$ of $K$-connections on $E$, and the space $\mathscr{S}$ of holomorphic global sections of $E(F)$, both properly endowed with symplectic structures.

On the space of sections, we can induce a moment map by fibrewise extension of the symplectic form on $F$, i.e.,
\begin{equation*}
\omega (\phi_1, \phi_2)=\int_X \omega_F (\phi_1(x), \phi_2(x))
\end{equation*}
where $\omega_F$ is the symplectic form on $F$.

For the form on the space of connections, we want to be more careful. Since $K$ splits as a product, $E$ splits accordingly as $E=\prod E_i$ where $E_i$ is a $K_i$ bundle, and $K$-connection splits as a sum $A=\bigoplus A_i$ where $A_i$ is a $K_i$ connection on $E_i$. If for each $E_i$ we take the Atiyah-Bott symplectic form $\omega_i$, and $a_i$ is a collection of positive numbers, then $\omega_E=\sum a_i \omega_i$ is a symplectic form on $\mathscr{A}^{1,1}$.

Now, the gauge group of $E$ acts naturally on $\mathscr{A}^{1,1}$ in a symplectic fashion. Assume that a moment map for the action of $K$ on $F$ exists. It is easy to see that it extends fibrewise to a moment map $\mu$ on $\mathscr{S}$. On the other hand, on the connection part, for each $i$, the Atiyah-Bott moment map is defined, and the moment map for our symplectic form is then the weighted sum of all of these, i.e., its $i$th component is $\left( \mu_E \right)_i =a_i \Lambda F_i$, where $F_i$ is the curvature of the connection $A_i$, and $\Lambda : \mathcal{A}^* \to \mathcal{A}^{*-2}$ is the adjoint of the map that wedges by $\omega_i$.

The gauge equations just define level sets of the moment map on the product $\mathscr{A}^{1,1}\times \mathscr{S}$. Given a collection of central elements $c_i\in \mathfrak{k}_i$, where $\mathfrak{k}_i$ is the Lie algebra of $K_i$, then the gauge equations are
\begin{equation*}
a_i\Lambda F_i +\mu_i (\phi)=c_i
\end{equation*}
In this equation, as always, we're using a hidden parameter, the implicit choice of an equivariant  isomorphism $\mathfrak{k}\simeq \mathfrak {k}^*$ of the Lie algebra with it dual.

\subsubsection{Stability}
Suppose a faithful representation $K\to U(V)$ is given. Given a pair $(\sigma, \chi)$ of a holomorphic reduction $\pi: X\to E(G/P)$ to a parabolic subgroup $P$, and and anti-dominant character $\chi$ of $P$, there is a codimension-two submanifold over which the character $\chi$ induces a holomorphic filtration of the associated fibre bundle $\mathbb{V}=E(V)$:
\begin{equation*}
0\subset \mathbb{V}^1 \subset ... \subset \mathbb{V}^j \subset ... \subset \mathbb{V}^r=\mathbb{V}
\end{equation*}
where $\lambda_1< ... <\lambda_j<...<\lambda_r$ are the eigenvalues of $\rho(\chi)$, and $\mathbb{V}^{\lambda_j}=\bigoplus_{i\leq j} \mathbb{V}(\lambda_i)$ is the sum of all eigenbundles with $\lambda_i\leq \lambda_j$. Given such a pair, we define
\begin{equation*}
\deg (\pi, \chi)=\lambda_r \deg (\mathbb{V}^r)+ \sum_{k=1}^{r-1} (\lambda_k - \lambda_{k+1})\deg (\mathbb{V}^k)
\end{equation*}
where $\deg (\mathbb{V})$ is the degree of the vector bundle.

Now, since our structure group is actually a product of groups, and our connection splits accordingly, nothing demands that we take the Atiyah-Bott form for the total bundle directly. In fact, just above we took a weighted sum of the Atiyah-Bott froms on the vertices. A reduction $\pi$ induces a reduction $\pi_i$ on each of the vertices, and an anti-dominant character obviously splits $\chi=\oplus \chi_i$, like the Lie algebra. But then, the maximal weight changes accordingly, and in fact we should consider instead
\begin{equation*}
\deg_a (\pi, \chi)= \sum a_i \deg (\pi_i, \chi_i)
\end{equation*}
Here, the positive numbers $a_i$ are the parameters for the moment map, as above. We'll call this the `$a$-degree.'

Consider now the action of $K$ on $F$, and for any $x\in F$ and $k\in \mathfrak{k}$ let
\begin{equation*}
\lambda_t (x, k)=\langle \mu (\exp (itk)x), k\rangle
\end{equation*}
where $\langle \cdot, \cdot \rangle$ is the canonical pairing of $\mathfrak{k}$ with its dual. Then, the \emph{maximal weight} of the action of $k$ on $x$ is
\begin{equation*}
\lambda (x,k)=\lim_{t\to \infty} \lambda_t (x,k)
\end{equation*}
This number plays the role of the Hilbert-Mumford criterion in the K\"ahler setting.

Finally, given a section $\phi \in \Omega^0 (E(F))$ of the associated bundle with fibre $F$, and a central element $c\in \mathfrak{k}$, the total $c$-degree $(\sigma, \chi)$ is defined as
\begin{equation*}
T_\phi^{c_i}(\pi, \chi)=\deg_{a} (\pi, \chi) + \int_X \lambda (\phi(x), -ig_{\pi, \chi}(x))+\langle i\chi, c\rangle \mathrm{Vol} (X)
\end{equation*}
Here, $g_{\pi, \chi}\in \Omega^0 (E\times_\mathrm{Ad}i\mathfrak{k})$ is the fibrewise dual of $\chi$. The total degree is allowed to be $\infty$. Henceforward, as before, we'll always assume that the volume of $X$ is normalized to one.

Finally, we define stablity:
\begin{generalstability}
Let $c_i\in \mathfrak{k}_i$ be central elements. A pair $(A,\phi)\in \mathscr{A}^{1,1}\times \mathscr{S}$ is $c_i$-stable if for any submanifold $X_0\subset X$ of complex codimension 2, for any parabolic subgroup $P$ of $G$, for any holomorphic reduction $\pi \in \Gamma (X_0, E(G/P))$ defined on $X_0$, and for any antidominant character $\chi$ of $P$ we have
\begin{equation*}
T^{c_i}_\phi (\pi, \chi)>0
\end{equation*}
\end{generalstability}

For general observations on the notion of stability, see the end of section \ref{plain}.

\subsubsection{The statement}
We need a technical, but important definition:

\begin{generalsimplicity}
A pair $(A,\phi)\in \mathscr{A}^{1,1}\times \mathscr{S}$ is infinitesimally simple if no semisimple element in $\mathrm{Lie}(\mathscr{G}_G)$ stabilizes $(A,\phi)$.
\end{generalsimplicity}

The theorem is as follows:

\begin{hkcorrespondence}
Let $(A,\phi)\in \mathscr{A}^{1,1}\times \mathscr{S}$ be an infinitesimally simple pair. Then $(A,\phi)$ is stable if and only if there is a gauge transformation $g \in \mathscr{G}_G$ such that $(B,\psi)=g\cdot (A, \phi)$ solves the gauge equations
\begin{equation}
a_i\Lambda F_i +\mu_i (\phi)=c_i
\end{equation}
Furthermore, if two different $g, g'\in \mathscr{G}_G$ yield a solution, then there exists a $k\in \mathscr{G}_K$ such that $g'=kg$.
\end{hkcorrespondence}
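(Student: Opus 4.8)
The plan is to follow the variational method of Mundet i Riera \cite{mundet}, modified only to carry the weights $a_i$ on the Atiyah--Bott forms. After fixing the reduction to $\mathscr{G}_K$ (i.e. the Hermitian metric), the complexified group $\mathscr{G}_G$ acts on the pair, and one integrates the moment map along this action to obtain a functional $\Psi$ on the orbit whose gradient at $(B,\psi)=g\cdot(A,\phi)$ is precisely the defect $a_i\Lambda F_i+\mu_i(\psi)-c_i$ measured in $\mathfrak{k}_i$. Thus the critical points of $\Psi$ are exactly the solutions of the gauge equations, which is the role of the lemma identifying critical points with solutions. The decisive structural fact is that $\Psi$ is geodesically convex for the non-positively curved symmetric-space metric on the space of metrics, and this convexity drives both implications.

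First I would treat the easy direction together with uniqueness. Restricting $\Psi$ to the geodesic ray $t\mapsto\exp(it\chi)$ generated by an antidominant character $\chi$, one computes that the slope $\frac{d}{dt}\Psi$ converges as $t\to\infty$ to the total degree $T^{c_i}_\phi(\pi,\chi)$; here the $a$-degree $\deg_a(\pi,\chi)$, the section weight $\int_X\lambda(\phi(x),-ig_{\pi,\chi}(x))$ computed in (\ref{sectionweight}), and the central term $\langle i\chi,c\rangle$ assemble exactly as in the definition of stability. Consequently, if $(A,\phi)$ already solves the gauge equations it is a minimum of $\Psi$, every ray has nonnegative asymptotic slope, and infinitesimal simplicity rules out the vanishing case, yielding the strict inequality $T^{c_i}_\phi(\pi,\chi)>0$, which is stability. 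For uniqueness, two solutions $g\cdot(A,\phi)$ and $g'\cdot(A,\phi)$ are the endpoints of a geodesic along which $\Psi$ is convex with vanishing endpoint slopes; convexity forces $\Psi$ to be constant along it, so the generating Hermitian direction fixes $(A,\phi)$, and infinitesimal simplicity identifies the two solutions as differing by an element of $\mathscr{G}_K$, i.e. $g'=kg$.

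The hard direction, that stability implies solvability, is the analytic core and the main obstacle. The plan is the direct method: minimize $\Psi$ over the complexified orbit and show the infimum is attained. Everything hinges on \emph{properness} --- that stability forces $\Psi(g)\to\infty$ as the size of the Hermitian endomorphism $s=\log(g^*g)$ grows. This is the purpose of the main estimate, which controls $\sup_X|s|$ by $\Psi$ together with a lower-order norm of $s$, the ensuing corollary on equivalence of estimates then upgrading this to genuine coercivity. Properness can only fail along a minimizing sequence $g_n$ with $\|s_n\|\to\infty$; normalizing and passing to a weak limit by the Uhlenbeck--Yau argument (the contradicting-sequence lemma) produces a weakly holomorphic subbundle, which by Uhlenbeck--Yau regularity and the codimension-two extension invoked in section \ref{plain} defines a genuine reduction $(\pi,\chi)$ with $T^{c_i}_\phi(\pi,\chi)\le 0$, contradicting stability. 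Hence $\Psi$ is proper, a smooth minimizer exists by elliptic regularity, and the critical-point characterization shows it solves the gauge equations.

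The genuinely new bookkeeping, and where I expect the only real subtlety, is the weighting: one must verify that replacing the single Atiyah--Bott form by the weighted sum $\sum a_i\omega_i$ preserves both the geodesic convexity of $\Psi$ and the main estimate. Since this reweighting merely rescales each factor $\mathfrak{k}_i$ by a fixed positive constant, the convexity and the elliptic estimates are untouched up to harmless constants, so the argument goes through as a direct adaptation of \cite{mundet}; this is why we are content to outline rather than reprove it in full.
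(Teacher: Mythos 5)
Your proposal is correct and follows essentially the same route as the paper: both are direct adaptations of Mundet i Riera's variational scheme --- the integral of the moment map with critical points as solutions, stability $\Rightarrow$ main estimate/properness (via the contradicting-sequence lemma and Uhlenbeck--Yau regularity producing a destabilizing reduction $(\pi,\chi)$) $\Rightarrow$ attained minimum $\Rightarrow$ solution, with the converse direction and uniqueness coming from convexity of the integral, and the weights $a_i$ entering only as harmless positive rescalings. The one step the paper spells out that you pass over silently is the Fredholm index-zero argument for the weighted linearized operator $L(u)=i\sum a_i\langle d\mu_i,u_i\rangle+i\langle d\mu_S,u\rangle$, which justifies that minima taken in the restricted set $\mathscr{M}_{2,B}^p$ are genuine zeros of the moment map; this is exactly the kind of weight bookkeeping your final paragraph anticipates.
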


The proof of this correspondence takes up the rest of this section. The general strategy is standard, and in terms of symplectic geometry can be described as the sequence of steps: stability $\Rightarrow$ properness of the integral $\Rightarrow$ zero of the moment map $\Rightarrow$ stability.

\subsection{Preliminaries}

\subsubsection{The integral of the moment map}

The central construction in the proof is that of the integral of the moment map; this is a rather general construction, and it is the (infinite-dimensional) K\"ahler analogue of Kempf-Ness map for smooth projective varieties. For proofs we refer the reader to \cite{mundet}.

Let $H$ be a Lie group, and suppose there is a complexification $G$ for which the inclusion $H\hookrightarrow G$ induces a surjection $\pi_1(H)\twoheadrightarrow \pi_1(G)$. Note that we are not assuming finite dimensionality, since in our case, the groups are the inifinite dimensional gauge groups.) Let $M$ be a K\"ahler manifold on which $H$ acts respecting the structure, and for which a moment map $\mu: M\to \mathfrak{h}^*$ exists. For a fixed point $p\in M$, we define a 1-form $\sigma^p$ on $L$ by the formula
\begin{equation*}
\sigma^p_g(v)=\langle \mu(g\cdot p), -i\pi(v)\rangle
\end{equation*}
where $g\in G$, $v\in T_gG$, and $\pi:\mathfrak{h}\oplus i\mathfrak{h}\to i\mathfrak{h}$ is the projection onto the second factor. Then, $\sigma$ is exact (it is here that the surjection $\pi_1(H)\twoheadrightarrow \pi_1(G)$ is needed,) and we denote by $\Psi_p:G\to \mathbb{R}$ the unique function such that $d\Psi_p=\sigma^p$, and $\Psi_p(1)=1$. It turns out that the $\Psi_p$ fit together into a smooth function $\Psi:M\times G \to \mathbb{R}$, which we call the integral of the moment map.

The properties of this map are described in the following proposition.

\begin{integralproperties}
Let $p\in M$ be any point, and $s\in \mathfrak{h}$.
\begin{enumerate}
\item $\Psi(p, \exp(is))=\int_0^1 \langle \mu(g\cdot p), s\rangle dt=\int_0^1\lambda_t(p,s)dt$.
\item $\partial_t \Psi (p,\exp(its))|_{t=0}=\langle \mu(p),s\rangle=\lambda_0 $.
\item $\partial^2_t\Psi (p,\exp (its))|_{t=t_0}\geq 0$ for any $t_0 \in \mathbb{R}$, with equality if and only if $\mathcal{X}_s(\exp (it_os)\cdot p)=0$, where $\mathcal{X}_s$ is the vector field generated by $s$.
\item $\Psi (p, \exp(its)\cdot p)\geq (t-t_0)\lambda_t(p,s)+C_s(p,t_0)$ for any $t_0 \in \mathbb{R}$, where $C_s$ is a continuous function in all variables.
\item $\Psi (p,g)+\Psi(g\cdot p, h)=\Psi (p, hg)$ for any $g,h \in G$.
\item $\Psi (h\cdot p, g)=\Psi (p, h^{-1}gh$ and $\Psi (p, hg)=\Psi(p,g)$ for any $h\in H$, and $g\in G$.
\item $\Psi (x,1)=0$.

\end{enumerate}
\end{integralproperties}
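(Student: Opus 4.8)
The plan is to read every property off the defining relation $d\Psi_p=\sigma^p$ together with the normalization $\Psi(p,1)=0$; equivalently, since $\sigma^p$ is exact, to set $\Psi(p,g)=\int_\gamma\sigma^p$ for any path $\gamma$ from $1$ to $g$ in $G$, path-independence being exactly what the hypothesis $\pi_1(H)\twoheadrightarrow\pi_1(G)$ buys us. I will work throughout in the right-invariant trivialization, writing $\sigma^p_g(v)=\langle\mu(g\cdot p),-i\pi(\dot g g^{-1})\rangle$; this is the choice that makes the cocycle transparent. Property (7) is then immediate from the constant path. The computational core is a single identity: along $t\mapsto\exp(its)$ with $s\in\mathfrak h$, the right-logarithmic derivative is the constant $is\in i\mathfrak h$, so $\pi$ fixes it and $\partial_t\Psi(p,\exp(its))=\langle\mu(\exp(its)\cdot p),-i(is)\rangle=\langle\mu(\exp(its)\cdot p),s\rangle=\lambda_t(p,s)$. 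Integrating from $0$ to $1$ gives (1), and evaluating at $t=0$ gives (2).

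For (3) I would differentiate this first-derivative formula once more. Writing $q_t=\exp(its)\cdot p$ and $\mathcal X_\xi$ for the fundamental vector field of $\xi\in\mathfrak g$, we have $\dot q_t=\mathcal X_{is}(q_t)$, so $\partial_t^2\Psi(p,\exp(its))=\partial_t\langle\mu(q_t),s\rangle=d\langle\mu,s\rangle(\mathcal X_{is})=\omega(\mathcal X_s,\mathcal X_{is})$, the last step being the moment-map condition $d\langle\mu,s\rangle=\iota_{\mathcal X_s}\omega$. The decisive input is that the $G$-action is holomorphic and extends the Hamiltonian $H$-action, so $\mathcal X_{is}=J\mathcal X_s$; hence the second derivative equals $\omega(\mathcal X_s,J\mathcal X_s)=\|\mathcal X_s(q_t)\|^2\geq 0$ in the K\"ahler metric, with equality exactly when $\mathcal X_s(\exp(it_0 s)\cdot p)=0$. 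This establishes (3), and (4) follows formally: a smooth convex function of $t$ lies above its tangent line at any $t_0$, which is the asserted affine lower bound, with $C_s(p,t_0)$ equal to $\Psi(p,\exp(it_0 s))$ and continuous in its arguments by smoothness of $\Psi$.

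The algebraic properties (5) and (6) I would obtain from path-additivity. To prove the cocycle $\Psi(p,hg)=\Psi(p,g)+\Psi(g\cdot p,h)$, integrate $\sigma^p$ along the concatenation of a path $1\to g$ with the path $t\mapsto h(t)g$, where $h(t)$ runs from $1$ to $h$. In the right-invariant trivialization the logarithmic derivative of $t\mapsto h(t)g$ is $\dot h(t)h(t)^{-1}$, independent of $g$, while the argument of $\mu$ is $h(t)g\cdot p=h(t)\cdot(g\cdot p)$; the second leg therefore reproduces exactly the integrand defining $\Psi(g\cdot p,h)$, and the first leg is $\Psi(p,g)$. For (6), the observation is that along any path inside $H$ the right-logarithmic derivative stays in $\mathfrak h=\ker\pi$, so $\sigma^p$ integrates to zero and $\Psi(p,h)=0$ for $h\in H$; feeding this into the cocycle (splitting $h^{-1}gh$ first as $(h^{-1}g)h$ and then $h^{-1}g$ as $h^{-1}\cdot g$) yields both $\Psi(h\cdot p,g)=\Psi(p,h^{-1}gh)$ and $\Psi(p,hg)=\Psi(p,g)$.

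The main obstacle is the convexity in (3): it is the only place where the K\"ahler geometry and the defining property of the moment map genuinely enter, and the identity $\mathcal X_{is}=J\mathcal X_s$ must be justified from holomorphicity of the complexified action. Everything else is bookkeeping once one fixes the right-invariant trivialization and uses exactness of $\sigma^p$ to license the free choice of integration paths; the detailed verifications are carried out in \cite{mundet}.
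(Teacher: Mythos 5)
Your proposal is correct and follows essentially the standard approach: the paper offers no proof of this proposition at all (it defers to \cite{mundet}), and your derivation --- right-invariant trivialization to make sense of $\pi(v)$ for $v\in T_gG$, the formula $\partial_t\Psi(p,\exp(its))=\lambda_t(p,s)$, convexity from $d\langle\mu,s\rangle=\iota_{\mathcal{X}_s}\omega$ together with $\mathcal{X}_{is}=J\mathcal{X}_s$, and the cocycle by concatenating paths --- is precisely the argument of that reference. Two remarks are worth recording. First, in item (4) what you actually prove is the tangent-line bound $\Psi(p,\exp(its))\geq(t-t_0)\lambda_{t_0}(p,s)+\Psi(p,\exp(it_0s))$, with slope $\lambda_{t_0}$, not the printed slope $\lambda_t$; this is the right thing to prove, because the statement as printed is false: if, say, $\lambda_{t'}(p,s)=t'$ (which occurs already for the translation action of $\mathbb{C}$ on $\mathbb{C}$), then $\Psi(p,\exp(its))=t^2/2$ while $(t-t_0)\lambda_t(p,s)=t^2-t_0t$, and no constant $C_s(p,t_0)$ can satisfy $t^2/2\geq t^2-t_0t+C$ for all $t$. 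So $\lambda_t$ is a typo for $\lambda_{t_0}$, and it is the $\lambda_{t_0}$ version that the properness argument later in the section actually uses; you silently corrected this, which is fine but should be said explicitly. Second, your insistence on the right-invariant trivialization is not cosmetic: the paper's expression $\pi(v)$ for $v\in T_gG$ is strictly meaningless without such a choice, and with the left-invariant one the integrand along $t\mapsto h(t)g$ acquires an $\mathrm{Ad}_{g^{-1}}$ that does not commute with $\pi$, so property (5) would not come out in the stated form; your choice is the one under which the proposition is true as written.
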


Together with the convexity proven in the previous proposition, the next lemma is the fundamental property in the proof:

\begin{criticalintegral}
An element $g\in G$ is a critical poit of $\Psi_p$ if and only if $\mu (g\cdot p)=0$.
\end{criticalintegral}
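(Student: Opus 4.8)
The plan is to read off critical points directly from the construction of $\Psi_p$: since $\Psi_p$ is by definition the unique primitive of the $1$-form $\sigma^p$, an element $g\in G$ is a critical point of $\Psi_p$ precisely when $\sigma^p_g=0$ as a linear functional on $T_gG$. So the entire task reduces to showing that $\sigma^p_g$ vanishes if and only if $\mu(g\cdot p)=0$, which I would do by testing the differential on the two pieces of $T_gG\cong\mathfrak{g}=\mathfrak{h}\oplus i\mathfrak{h}$ and invoking the properties already established for $\Psi$.

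First I would dispose of the ``compact'' directions. Identifying $T_gG$ with $\mathfrak{g}$ by right translation, a direction coming from $k\in\mathfrak{h}$ is realized by the curve $t\mapsto \exp(tk)g$, which stays in $Hg$ since $\exp(tk)\in H$. By the invariance $\Psi(p,hg)=\Psi(p,g)$ for $h\in H$ (property (6) of the previous proposition), $\Psi_p$ is constant along this curve, so $\sigma^p_g$ automatically annihilates the right-translate of $\mathfrak{h}$. Thus only the directions in $i\mathfrak{h}$ can contribute to the differential.

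Next I would compute on the ``noncompact'' directions. For $s\in\mathfrak{h}$, take the curve $t\mapsto\exp(its)g$. The cocycle identity $\Psi(p,hg)=\Psi(p,g)+\Psi(g\cdot p,h)$ (property (5)) gives $\Psi(p,\exp(its)g)=\Psi(p,g)+\Psi(g\cdot p,\exp(its))$; differentiating at $t=0$ and applying property (2) \emph{at the point} $g\cdot p$ yields
\[
\left.\frac{d}{dt}\right|_{t=0}\Psi(p,\exp(its)g)=\langle \mu(g\cdot p),s\rangle.
\]
Hence on the right-translate of $i\mathfrak{h}$ the form $\sigma^p_g$ is exactly $s\mapsto\langle\mu(g\cdot p),s\rangle$. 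Combining the two computations, $\sigma^p_g=0$ iff $\langle\mu(g\cdot p),s\rangle=0$ for every $s\in\mathfrak{h}$; since $\mu$ takes values in $\mathfrak{h}^*$ and the canonical pairing is nondegenerate, this is equivalent to $\mu(g\cdot p)=0$, proving the lemma.

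The step requiring the most care -- the only real obstacle -- is the bookkeeping of tangent directions: one must verify that the directions in $\mathfrak{h}$ genuinely contribute nothing (this is precisely the $H$-invariance (6)) while the directions in $i\mathfrak{h}$ recover the \emph{full} moment map, so that the projection $\pi$ onto $i\mathfrak{h}$ is surjective on the relevant image of $T_gG$ and no direction is overlooked. Once that is pinned down, the equivalence is the statement that a covector in $\mathfrak{h}^*$ vanishes iff it annihilates all of $\mathfrak{h}$, which is immediate. Alternatively, one can bypass properties (2), (5), (6) entirely and argue straight from $\sigma^p_g(v)=\langle\mu(g\cdot p),-i\pi(v)\rangle$: since $-i\pi(v)$ ranges over all of $\mathfrak{h}$ as $v$ ranges over $T_gG$, the functional $\sigma^p_g$ vanishes iff $\mu(g\cdot p)$ pairs to zero with every element of $\mathfrak{h}$, i.e.\ iff $\mu(g\cdot p)=0$.
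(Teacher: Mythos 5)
Your proof is correct: since $\Psi_p$ is by construction a primitive of $\sigma^p$, criticality at $g$ is exactly the vanishing of $\sigma^p_g$, and your two-step computation (the $\mathfrak{h}$-directions die by the $H$-invariance in property (6), while the $i\mathfrak{h}$-directions recover $\langle\mu(g\cdot p),\cdot\rangle$ via properties (5) and (2)) together with nondegeneracy of the canonical pairing gives the equivalence; your alternative one-line argument straight from the formula for $\sigma^p_g$ is equally valid, and has the virtue of being independent of whether $T_gG$ is identified with $\mathfrak{g}$ by left or right translation. The paper itself offers no proof of this lemma, deferring to the cited work of Mundet i Riera, and your argument is essentially the one found there, so there is nothing to add.
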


\subsubsection{Equivalence of $C^0$ and $L^1$ norms}
As usual, we'll need to complete spaces of smooth maps by Sobolev norms, to get spaces that are flexible enough. In general, we want twice-differentiability, and the $L^p$ norm needs to satisfy a bound coming from the Sobolev multiplication theorem. In particular, if $n=\dim  X$, we must choose $p>2n$. The proof of the correspondence involves a properness argument on the integral of the moment map, and to prove such properness we have to fiddle with norms. In particular, we'll require an equivalence between $C^0$ and $L^1$ estimates. Choose $B<0$; we will need to restrict to the subset
\begin{equation*}
\mathscr{M}_{2,B}^p=\left\{ s \in L^p_2(E\times_{\mathrm{Ad}} \mathfrak{k}) |\textrm{\phantom{m}} || \mu^c (\exp (s)(A,\phi))||_{L^p}^p\leq B \right\}
\end{equation*}

\begin{equivalencenorms}\label{equivalencenorms}
There are two constants $C_1,C_2>0$ (which implicitly depend on $B$ and on the parameters $a_i$) such that for all $s\in \mathscr{M}_{2,B}^p$ one has $\sup |s| \leq C_1 ||s||_{L^1}+C_2$.
\end{equivalencenorms}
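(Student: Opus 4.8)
The plan is to reduce the statement to a standard elliptic sup-estimate by first producing a pointwise differential inequality for $|s|$. Throughout, fix the pair $(A,\phi)$, and write $\mu^c(g)$ as shorthand for $\mu^c(g\cdot(A,\phi))$, the residual of the gauge equations after acting by the complex gauge transformation $g$. The core analytic input I would use is the following elliptic lemma, essentially due to Donaldson and standard in this setting: if $u\geq 0$ is a function on the compact manifold $X$ satisfying $\Delta u\leq f$ weakly, with $f\in L^p$ and $p>2n=\dim_{\mathbb{R}}X$, then $\sup u\leq C(\|u\|_{L^1}+\|f\|_{L^p})$, where $C$ depends only on $X$ and $p$ and $\Delta$ is the (nonnegative) Laplacian. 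Granting this, the whole proof comes down to exhibiting a function $f$, controlled in $L^p$ by $B$ and the fixed data, with $\Delta|s|\leq f$.

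First I would establish the pointwise inequality
\begin{equation*}
\Delta|s|\leq C_3\bigl(|\mu^c(\exp(s))|+|\mu^c(1)|\bigr).
\end{equation*}
This is obtained by a Weitzenböck-type computation: differentiating the gauge equation residual along the complexified orbit $t\mapsto \exp(ts)\cdot(A,\phi)$, the leading term in $\mu^c(\exp(s))-\mu^c(1)$ is $i\Lambda\bar\partial_A\partial_A s$ (the connection part of the moment map changes by the curvature of the transformed Chern connection), and pairing this against $s/|s|$ reproduces $\Delta|s|$ up to the indicated terms; the convexity property (3) of Proposition \ref{integralproperties} guarantees the correct sign. Since $|s|$ fails to be smooth where $s=0$, I would carry out the computation for the regularizations $\sqrt{|s|^2+\varepsilon}$ and pass to the limit, using Kato's inequality to absorb the gradient terms; this is routine and yields the displayed estimate in the weak sense.

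With the differential inequality in hand, set $f:=C_3(|\mu^c(\exp(s))|+|\mu^c(1)|)$ and $u:=|s|$. Because $s\in\mathscr{M}^p_{2,B}$, the constraint defining that set gives $\|\mu^c(\exp(s))\|_{L^p}\leq |B|^{1/p}$, while $\|\mu^c(1)\|_{L^p}$ is a fixed finite constant depending only on $(A,\phi)$ and the parameters $a_i$. Hence $\|f\|_{L^p}\leq C_4$ with $C_4=C_4(B,a_i)$. Applying the elliptic lemma gives $\sup|s|\leq C(\|\,|s|\,\|_{L^1}+C_4)=C\|s\|_{L^1}+CC_4$, which is the asserted bound with $C_1:=C$ and $C_2:=CC_4$, both depending on $B$ and the $a_i$ as claimed.

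The main obstacle is the pointwise inequality of the second paragraph: getting the sign and the exact form of the lower-order terms right requires a careful identification of how the moment map transforms under the complexified gauge action, together with a clean application of Kato's inequality at the zero locus of $s$. Once this local computation is secured, the passage to the global sup-bound is entirely standard. I note that the hypothesis $p>2n$, already imposed when completing the spaces of sections by Sobolev norms, is exactly what the elliptic lemma requires, so no additional restriction on $p$ is needed.
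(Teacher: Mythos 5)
Your argument is correct and is essentially the proof the paper relies on: the paper itself gives no details, deferring to \cite{ag}, Section 3.5, and the proof there is exactly your two-step scheme --- the weak pointwise inequality $\Delta|s|\leq C\bigl(|\mu^c(\exp(s))|+|\mu^c(1)|\bigr)$ obtained from the transformation of the moment map under the complexified gauge action together with Kato's inequality, followed by the standard Donaldson-type elliptic estimate $\sup u\leq C\bigl(\|u\|_{L^1}+\|f\|_{L^p}\bigr)$ for weak subsolutions with $p>2n$, with the $L^p$ bound on the right-hand side supplied by the defining constraint of $\mathscr{M}_{2,B}^p$. The only point worth noting is the one the paper itself makes: nothing in this argument is specific to the general linear group, and the constants inherit the dependence on $B$ and the weights $a_i$ exactly as you state.
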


For a proof this lemma, check \cite{ag} section 3.5, which does not use anything specific to the general linear group.

\begin{mainest}
The integral of the moment map $\Psi^c$ satisfies the $C^0$ \emph{main estimate} if there are constants $C_1,C_2> 0$ such that
\begin{equation*}
\sup |s| \leq C_1 \Psi^c (\exp (s))+C_2
\end{equation*}
If the same condition is verified with $\sup |s|$ replaced with the $L^1$ norm, then we say $\Psi^c$ satisfies the \emph{$L^1$ main estimate}.
\end{mainest}

The following is an easy corollary of Lemma \ref{equivalencenorms}
\begin{equivalenceestimates}
In $\mathscr{M}_{2,B}^p$, the integral of the moment map satisfies the $C^0$ main estimate if and only if it satisfies the $L^1$ main estimate.
\end{equivalenceestimates}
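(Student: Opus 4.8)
The plan is to observe that on $\mathscr{M}_{2,B}^p$ the $C^0$ and $L^1$ norms are comparable up to an additive constant, so that each main estimate transports into the other simply by composing linear inequalities. Lemma \ref{equivalencenorms} already supplies one of the two comparisons, namely $\sup|s| \leq C_1\|s\|_{L^1}+C_2$; the reverse comparison is elementary and requires no restriction to $\mathscr{M}_{2,B}^p$.

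First I would record the trivial bound coming from the normalization $\mathrm{Vol}(X)=1$: for any $s$ one has $\|s\|_{L^1}=\int_X|s|\leq(\sup|s|)\,\mathrm{Vol}(X)=\sup|s|$, so that $\|s\|_{L^1}\leq\sup|s|$ holds unconditionally, while Lemma \ref{equivalencenorms} gives the opposite inequality $\sup|s|\leq C_1\|s\|_{L^1}+C_2$ on $\mathscr{M}_{2,B}^p$. Now suppose $\Psi^c$ satisfies the $C^0$ main estimate, $\sup|s|\leq A_1\Psi^c(\exp(s))+A_2$ for some $A_1,A_2>0$. Combining with $\|s\|_{L^1}\leq\sup|s|$ immediately yields $\|s\|_{L^1}\leq A_1\Psi^c(\exp(s))+A_2$, which is the $L^1$ main estimate with the same constants. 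Conversely, assuming the $L^1$ main estimate $\|s\|_{L^1}\leq A_1'\Psi^c(\exp(s))+A_2'$ and feeding it into Lemma \ref{equivalencenorms}, I obtain for $s\in\mathscr{M}_{2,B}^p$
\[
\sup|s|\leq C_1\|s\|_{L^1}+C_2\leq C_1\bigl(A_1'\Psi^c(\exp(s))+A_2'\bigr)+C_2=(C_1A_1')\,\Psi^c(\exp(s))+(C_1A_2'+C_2),
\]
and since $C_1A_1'>0$ and $C_1A_2'+C_2>0$, this is precisely the $C^0$ main estimate.

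Since the whole argument reduces to chaining two linear inequalities, there is no genuine obstacle: all the analytic content sits in Lemma \ref{equivalencenorms}, borrowed from \cite{ag}, which is where the actual bound over $\mathscr{M}_{2,B}^p$ is established. The one point worth keeping straight is the asymmetry of the two implications: the direction $L^1\Rightarrow C^0$ genuinely uses the restriction to $\mathscr{M}_{2,B}^p$ (that is where Lemma \ref{equivalencenorms} is valid), whereas the direction $C^0\Rightarrow L^1$ uses only the normalization of the volume and therefore holds on all of $L^p_2(E\times_{\mathrm{Ad}}\mathfrak{k})$.
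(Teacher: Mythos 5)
Your proof is correct and is exactly the argument the paper intends: the paper offers no proof beyond calling the corollary an easy consequence of Lemma \ref{equivalencenorms}, and the intended reasoning is precisely your chaining of that lemma with the trivial bound $\|s\|_{L^1}\leq \sup|s|\,\mathrm{Vol}(X)=\sup|s|$. Your closing remark on the asymmetry of the two directions (only $L^1\Rightarrow C^0$ needs the restriction to $\mathscr{M}_{2,B}^p$) is a correct and worthwhile observation.
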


As we mentioned, the point here is that the proof of the correspondence demands the properness of the integral of the moment map in the weak topology of the infinite dimensional Lie algebra involved. The main estimate is only a requirement that implies properness, but it more easily serves as an intermediary step in the proof.

\begin{properintegral}\label{properintegral}
If $\Psi^c$ satisfies the main estimate, then $\Psi^c$ is proper in the weak topology of $L^p_2 (E\times_\mathrm{Ad} \mathfrak{k})$.
\end{properintegral}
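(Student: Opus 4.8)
The plan is to deduce weak properness from norm-boundedness of sublevel sets. Recall that $\Psi^c$ being proper in the weak topology means that the preimage of each bounded set is sequentially precompact in the weak topology of $L^p_2(E\times_\mathrm{Ad}\mathfrak{k})$. Since for $1<p<\infty$ this space is reflexive, norm-bounded sets are weakly sequentially precompact by Banach--Alaoglu together with the Eberlein--\v{S}mulian theorem. Hence it suffices to fix a constant $C$ and show that the sublevel set $\{\,s\in\mathscr{M}_{2,B}^p : \Psi^c(\exp(s))\leq C\,\}$ is bounded in the $L^p_2$-norm; equivalently, that any sequence $s_n\in\mathscr{M}_{2,B}^p$ with $\Psi^c(\exp(s_n))\leq C$ admits a uniform bound $\|s_n\|_{L^p_2}\leq C'$.

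First I would invoke the main estimate. By hypothesis $\Psi^c$ satisfies the $C^0$ main estimate, so
\begin{equation*}
\sup|s_n|\leq C_1\,\Psi^c(\exp(s_n))+C_2\leq C_1 C+C_2 .
\end{equation*}
Thus the $s_n$ are uniformly bounded in $C^0$, and since $X$ is compact, also in $L^p$. (By the corollary to Lemma~\ref{equivalencenorms} one may equally well run the argument starting from the $L^1$ main estimate.) The point is that the main estimate converts a bound on the scalar functional $\Psi^c$ into a pointwise bound on $s_n$, which is the input for the elliptic step.

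The heart of the matter is an elliptic bootstrap. Here I would expand the gauge-transformed moment map $\mu^c(\exp(s_n)(A,\phi))$ and isolate its leading term. Writing out the transformation of the curvature under the complex gauge transformation $\exp(s_n)$, the expression $\mu^c(\exp(s)(A,\phi))$ equals $i\Lambda\bar{\partial}_A\partial_A s$ plus terms of order at most one in $s$ whose coefficients are controlled by $\sup|s|$ (namely the algebraic contributions from $\mu_i(\phi)$ and the lower-order pieces of the transformed curvature). Since $s_n\in\mathscr{M}_{2,B}^p$, the quantity $\|\mu^c(\exp(s_n)(A,\phi))\|_{L^p}$ is bounded by a constant depending only on $B$; combined with the uniform $C^0$ bound from the previous paragraph, this forces $\|\Lambda\bar{\partial}_A\partial_A s_n\|_{L^p}$ to be uniformly bounded. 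The operator $s\mapsto\Lambda\bar{\partial}_A\partial_A s$ is second-order elliptic (a twisted Laplacian) on the compact manifold $X$, so the Calder\'on--Zygmund $L^p$-estimate gives
\begin{equation*}
\|s_n\|_{L^p_2}\leq C\left(\|\Lambda\bar{\partial}_A\partial_A s_n\|_{L^p}+\|s_n\|_{L^p}\right),
\end{equation*}
and both terms on the right are now uniformly bounded. This yields the desired bound $\|s_n\|_{L^p_2}\leq C'$, and reflexivity then furnishes a weakly convergent subsequence.

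The main obstacle is the elliptic bootstrap of the previous paragraph: one must carefully expand the gauge-transformed moment map, verify that its top-order part is precisely the stated elliptic operator, and check that every nonlinear remainder term is genuinely controlled by $\sup|s|$ alone, so that the uniform $C^0$-bound really does feed into the elliptic estimate. This is where the standing choice $p>2n=2\dim X$ enters, through the Sobolev multiplication theorem, ensuring that the nonlinear terms stay in $L^p$ with the right bounds. Once this is in place, the remaining deductions---from the $L^p_2$-bound to weak precompactness---are formal, and being essentially identical to the corresponding arguments in \cite{mundet} and \cite{ag}, can be carried over verbatim.
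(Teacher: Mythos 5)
Your overall skeleton is the right one, and it matches the route of the proof the paper points to (the paper itself gives no details, deferring to \cite{bradlow}, Section 3.14, and \cite{ag}, Section 3.30, where the statement is established by a contradiction argument): the main estimate converts bounded $\Psi^c$ into a uniform $C^0$ bound, membership in $\mathscr{M}^p_{2,B}$ supplies the uniform $L^p$ bound on $\mu^c(\exp(s_n)(A,\phi))$, one then wants a uniform $L^p_2$ bound, and reflexivity of $L^p_2$ finishes. The reduction of weak properness to norm-boundedness of sublevel sets is also fine.

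The gap is in the elliptic step, exactly at the point you yourself flag as the main obstacle. The expansion you assert is false: $\mu^c(\exp(s)(A,\phi))$ is \emph{not} $i\Lambda\bar\partial_A\partial_A s$ plus terms of order at most one in $s$ with coefficients controlled by $\sup|s|$. Writing $e^{-s}\partial_A e^{s}=\Upsilon(\mathrm{ad}\,s)(\partial_A s)$ with $\Upsilon(x)=(1-e^{-x})/x$, the curvature part of the transformed moment map is, schematically,
\begin{equation*}
\Lambda\bar\partial_A\bigl(\Upsilon(\mathrm{ad}\,s)\,\partial_A s\bigr)
=\Lambda\,\Upsilon(\mathrm{ad}\,s)\bigl(\bar\partial_A\partial_A s\bigr)
+\Lambda\,\bigl(D\Upsilon(\mathrm{ad}\,s)(\mathrm{ad}\,\bar\partial_A s)\bigr)(\partial_A s),
\end{equation*}
so besides the $s$-dependent principal coefficient $\Upsilon(\mathrm{ad}\,s)$ (harmless, since the $C^0$ bound keeps it bounded and positive) there is a remainder \emph{quadratic in the first derivatives of $s$}. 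A uniform bound on $\sup|s_n|$ gives no control whatsoever on $\nabla s_n$, so this term cannot be placed in $L^p$ from your hypotheses, and the one-shot Calder\'on--Zygmund estimate does not close. The appeal to $p>2n$ and Sobolev multiplication does not repair this, since multiplication theorems need a priori derivative bounds, which is precisely what is missing; an attempted absorption via Gagliardo--Nirenberg, $\|\nabla s\|_{L^4}^2\leq C\|s\|_{C^0}\|s\|_{L^2_2}$, also fails because the constant $C\|s\|_{C^0}$ is bounded but not small. The proofs in \cite{bradlow} and \cite{ag} get around this by extracting first-derivative control from the functional itself rather than from the main estimate: since $\Psi^c(\exp s)=\int_0^1\lambda_t\,dt$ and $\lambda_t$ contains the positive term $\int_0^t\|\exp(ils)\bar\partial_A(s)\exp(-ils)\|^2\,dl$, bounded $\Psi^c$ together with the $C^0$ bound forces $\|\bar\partial_A s_n\|_{L^2}\leq C$ (the same mechanism this paper uses later in the ``stability implies main estimate'' step); only then, from $C^0\cap L^2_1$ plus the $L^p$ moment map bound, does one reach $L^p_2$, via a Simpson-type bootstrap or the contradiction/weak-compactness argument of the cited references. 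Your proposal is missing both the source of first-derivative control and the mechanism that handles the quadratic gradient terms, and these constitute the actual analytic content of the lemma.
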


\begin{proof}
This lemma is proven by contradiction, and is precisely the same as \cite{bradlow} section 3.14, or \cite{ag} section 3.30.
\end{proof}

\subsubsection{Minima in $\mathscr{M}_{2,B}^p$} Our restriction to  $\mathscr{M}_{2,B}^p$ only makes sense if we can prove that the minima in this subset are in fact minima in the whole of $\mathscr{M}_2^p$. Suppose $(A,\phi)$ is a simple pair, and that $s$ minimizes the integral in $\mathscr{M}_2^p$. Define the operator $L: L_2^p(E\times_\mathrm{Ad}i\mathfrak{k})\to L^p(E\times_\mathrm{Ad}i\mathfrak{k})$ as
\begin{equation*}
L(u)=i\frac{d}{dt}\left. \mu^c (\exp (tu)(B,\theta)) \right|_{t=0}=i\langle d\mu^c, u\rangle (B,\theta) = i \sum a_i \langle d\mu_i , u_i \rangle +i\langle d\mu_S, u \rangle
\end{equation*}
Each $\langle d\mu_i , u_i \rangle$ is a Fredholm operator with index zero (indeed, up to a compact operator, it is $\partial_B^*\partial_B$, cf. \cite{bradlow}.) But, up to a compact operator, $L$ is a linear combination of these, so it is itself a Fredholm operator of index zero. We prove that it is also injective, implying that it is surjective. In fact, if $L(u)=0$,
\begin{equation*}
0=\langle iL(u), -iu\rangle=||\mathscr{X}_{-iu}(B,\theta)||^2
\end{equation*}
which implies that $-iu$ leaves $(B,\theta)$ fixed, and simplicity of $(A,\phi)$ now implies that $u=0$.

Knowing that $L$ is surjective, we conclude that there must be an $u$ such that $L(u)=-i\mu^c (B, \theta)$. A standard argument originally due to Simpson then shows that $\mu^c (B,\theta)=0$, cf. \cite{bradlow} or \cite{ag}.

\subsection{Stability implies main estimate}
We start with a lemma.

\begin{contradictingsequence}
If the integral of the moment map does not satisfy the main estimate, then there is an element $u_\infty \in L^2_p (E\times_\mathrm{Ad} \mathfrak{k})$ such that $\lambda ((A,\phi), -iu_\infty)\leq 0$.
\end{contradictingsequence}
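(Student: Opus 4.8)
The plan is to argue by contradiction, producing $u_\infty$ as a weak limit of a normalised minimising sequence. Suppose $\Psi^c$ fails the $C^0$ main estimate; by the corollary relating the $C^0$ and $L^1$ main estimates it then also fails the $L^1$ main estimate on $\mathscr{M}_{2,B}^p$, so for each $i\in\mathbb{N}$ there is an element $s_i\in\mathscr{M}_{2,B}^p$ with $\|s_i\|_{L^1}>i\,\Psi^c(\exp s_i)+i$. Setting $l_i:=\|s_i\|_{L^1}$, this forces $l_i\to\infty$ together with $\limsup_i \Psi^c(\exp s_i)/l_i\le 0$. I then normalise by putting $u_i:=s_i/l_i$, so that $\|u_i\|_{L^1}=1$; the desired $u_\infty$ will be a limit of the $u_i$.

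First I would secure the compactness needed to pass to the limit. Using the constraint $\|\mu^c(\exp(s_i)(A,\phi))\|_{L^p}\le B$ built into $\mathscr{M}_{2,B}^p$, together with Lemma \ref{equivalencenorms} (which controls $\sup|s_i|$ by $\|s_i\|_{L^1}$) and the elliptic estimates for the linearised moment-map operator as in \cite{bradlow} and \cite{ag}, one shows that $(u_i)$ is bounded in $L^p_2(E\times_{\mathrm{Ad}}\mathfrak{k})$. Since $p>2n$, the Sobolev embedding $L^p_2\hookrightarrow C^0$ is compact, so after passing to a subsequence I may assume $u_i\rightharpoonup u_\infty$ weakly in $L^p_2$ and $u_i\to u_\infty$ strongly in $C^0$.

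The core of the argument is to push the bound on $\Psi^c(\exp s_i)/l_i$ down to the maximal weight of $u_\infty$. By property (1) of the proposition on the integral of the moment map, rescaling time by $\tau=l_i t$ gives the exact identity
\[
\Psi^c(\exp s_i)=\int_0^{l_i}\lambda_\tau\bigl((A,\phi),-iu_i\bigr)\,d\tau,\qquad \lambda_\tau\bigl((A,\phi),-iu_i\bigr)=\bigl\langle \mu^c(\exp(\tau u_i)(A,\phi)),-iu_i\bigr\rangle .
\]
By the convexity property, $\tau\mapsto\lambda_\tau((A,\phi),-iu_i)$ is non-decreasing, so for any fixed $T>0$ and all $i$ with $l_i>T$,
\[
\frac{\Psi^c(\exp s_i)}{l_i}\ \ge\ \frac{l_i-T}{l_i}\,\lambda_T\bigl((A,\phi),-iu_i\bigr)-\frac{C(T)}{l_i},
\]
where $C(T)$ is a uniform bound for the finite-time integral $\bigl|\int_0^T\lambda_\tau\,d\tau\bigr|$ (available because the $u_i$ are uniformly bounded in $C^0$ and the flow $\exp(\tau u_i)(A,\phi)$ stays in a fixed compact region for $\tau\le T$). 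For fixed $T$ the finite weight $v\mapsto\lambda_T((A,\phi),-iv)$ is continuous in the $C^0$ topology, so letting $i\to\infty$ along the subsequence and using $\limsup_i \Psi^c(\exp s_i)/l_i\le 0$ yields $\lambda_T((A,\phi),-iu_\infty)\le 0$. As this holds for every $T$ and $\lambda=\sup_T\lambda_T$, I conclude $\lambda((A,\phi),-iu_\infty)\le 0$, which is the asserted element (the central parameter $c$ being already incorporated into $\Psi^c$, and hence into $\lambda$).

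The step I expect to be the main obstacle is the uniform $L^p_2$-bound on the normalised sequence $(u_i)$: one must upgrade the $C^0$/$L^1$ control of Lemma \ref{equivalencenorms} to genuine second-order Sobolev control by feeding the $L^p$-bound on $\mu^c(\exp(s_i)(A,\phi))$ through elliptic regularity, while simultaneously securing the uniform finite-time bound $C(T)$. This is the only point where hard analysis, rather than the formal convexity of $\Psi^c$, intervenes, and it is precisely here that one must check that the weights $a_i$ in the connection part of the moment map do not disturb the estimates of \cite{bradlow} and \cite{ag}.
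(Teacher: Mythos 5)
Your overall strategy (contradiction, $L^1$-normalisation $u_i=s_i/\|s_i\|_{L^1}$, rescaled convexity estimate, weak limit) is the same as the paper's, but the analytic heart of your argument has a genuine gap. You claim a uniform $L^p_2$ bound on the normalised sequence $(u_i)$, to be obtained by feeding the $L^p$ bound on $\mu^c(\exp(s_i)(A,\phi))$ through elliptic regularity. No such bound is available: the constraint defining $\mathscr{M}^p_{2,B}$ controls a \emph{nonlinear}, second-order quantity in $s_i$, not in $u_i$, and even a hypothetical bound on $\|s_i\|_{L^p_2}$ would not survive division by $l_i=\|s_i\|_{L^1}$, since nothing forces $\|s_i\|_{L^p_2}\lesssim l_i$. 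What the paper does instead --- and this is the key ingredient your proposal never uses --- is to invoke the explicit formula for the finite-time Atiyah--Bott weight,
\[
\lambda_t(A,s)=\int_X\langle\Lambda F_A, s\rangle +\int_0^t\|\exp (ils)\bar{\partial}(s)\exp(ils)\|\,dl ,
\]
summed over the factors with the weights $a_i$. Plugged into the convexity estimate (together with boundedness of the curvature and the uniform $C^0$ bound on the $u_j$ coming from Lemma \ref{equivalencenorms}), this bounds $\sum a_i\|\bar{\partial}((u_j)_i)\|_{L^2}$, hence yields only a uniform $L^2_1$ bound --- first order and $L^2$-based, far weaker than $L^p_2$, but sufficient: one extracts $u_j\rightharpoonup u_\infty$ weakly in $L^2_1$, strongly in $L^2$ by the compact embedding $L^2_1\hookrightarrow L^2$, with $u_\infty\neq 0$ because of the $C^0$/$L^1$ normalisation.

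This defect propagates into your limit passage. You assert that $v\mapsto\lambda_T((A,\phi),-iv)$ is continuous in the $C^0$ topology; it is not, because the connection part of the finite-time weight involves $\bar{\partial}v$ (first derivatives of $v$), as the displayed formula shows. The correct argument --- the step the paper defers to \cite{ag} --- passes to the limit using the weak $L^2_1$ convergence together with weak lower semicontinuity of the $L^2$ norm in the $\bar{\partial}$ term, i.e.\ the one-sided inequality $\lambda_T((A,\phi),-iu_\infty)\le\liminf_j\lambda_T((A,\phi),-iu_j)$, which is all that is needed to conclude $\lambda_T((A,\phi),-iu_\infty)\le 0$ for each $T$ and then take the supremum over $T$. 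So the skeleton of your proof (including the rescaling identity and the finite-$T$ bookkeeping) is sound and matches the paper's, but both the compactness step and the limit step must be replaced by the $\bar{\partial}$-based $L^2_1$ argument; neither elliptic regularity nor $C^0$ continuity can do the job.
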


\begin{proof}
Let $C_j$ be a sequence of positive constants diverging to infinity. We start by finding a sequence $(s_j)$ in $L^2_p (E\times_\mathrm{Ad} \mathfrak{k})$ such that $||s_j||_{L^1}\to \infty$ and $||s_j||_{L^1} \geq C_j \Psi (\exp{s_j})$ (cf. \cite{ag} Lemma 3.43.) With such a sequence in hand, we set $l_j=||s_j||_{L^1}$, and $u_j=s_j/l_j$, so that $||u_j||_{L_1}=1$ and $\sup |u_j|\leq C$. We can assume that $\lim_j \lambda^t_{a_i,\tau_i}((A,\phi),-iu_j)$ exists.

Using the convexity of the integral of the moment map, and the fact that $X$ is compact, we have
\begin{equation}\label{est}
\frac{l_j-t}{l_j}\lambda^t_{a_i,\tau_i}(A,-iu_j)+\frac{1}{l_j}\int_0^t \lambda^l_{a_i,\tau_i} (A,-iu_j)dl \leq C
\end{equation}
for some constant $C$.

For a principal bundle $E$ and a connection $A$ on $E$, we have (cf. \cite{mundet}):
\begin{equation*}
\lambda_t(A,s)=\int_X\langle\Lambda F_A, s\rangle +\int_0^t||\exp (ils)\bar{\partial}(s)\exp(ils)||dl
\end{equation*}
where $\lambda_t$ is the finite-time maximal weight for the Atiyah-Bott moment map. It easily follows, then, that in our case we have
\begin{equation*}
\lambda_{a_i,\tau_i}^t=\sum a_i \left(\int_X\langle\Lambda F_{A_i}, s\rangle +\int_0^t||\exp (ils)\bar{\partial}(s_i)\exp(ils)||dl  \right)
\end{equation*}

Using this and (\ref{est}) (recall that the curvature is bounded,) we can prove that $\sum a_i ||\bar{\partial}((u_j)_i
)||_{L^2}$ is bounded, and so $u_j\in L^2_1$. After passing to a subsequence, $u_j\to u_\infty$ weakly in $L^2_1$, since the $u_j$ belong o the unit ball. As the embedding $L^2_1\hookrightarrow L^2$ is compact, the convergence is also strong in $L^2$, and $u_\infty \neq0$ because of the uniform bound on the $C^0$ norm of the $u_j$. To see that $\lambda_{a_i,\tau_i}^t((A,\phi),-iu\infty)\leq 0$, see \cite{ag}.
\end{proof}

Using methods due to Uhlenbeck-Yau \cite{uy} (cf.\cite{bradlow} and \cite{popovici},) we can prove the the element $u_\infty$ in the lemma has almost everywhere constant eigenvalues, and that it defines a filtration of $V$ by holomorphic subbundles in the complement of a complex codimension 2 submanifold. But then (cf. \cite{mundet},) this defines a reduction $\pi$ of the structure group to a parabolic subgroup $P$, and an antidominant character $\chi$ of $P$ with $\deg_{a_i,\tau_i}(\pi,\chi)=\lambda ((A,\phi),-iu_\infty)\leq 0$, contradicting stability.

\subsection{Main estimate implies solution}
Here we need Lemma \ref{properintegral}, and the proof is essentially due to Bradlow \cite{bradlow}. Since $\Psi^c$ is proper in the weak topology, if $\Psi^c (\exp(s_j)$ is bounded, then $||s_j||_{L^2_p}$ is also bounded. But then, we take a minimizing sequence $(s_j)$ of $\Psi^c$, and by properness it converges weakly to some $s_\infty$ where $\Psi^c$ attains a minimum. But we've seen that minima to $\Psi^c$ correspond precisely to zeros of the moment map, so we only need to check smoothness, which follows from elliptic regularity.

\subsection{Solution implies stability}

Nothing really new happens here, since it mostly uses general properties of the integral of the moment map. It involves, however, computing some technical inequalities on the norms of the Lie algebra, so for details we refer to \cite{mundet}.

First of all, supposing that the orbit of a simple pair $(A,\phi)$ has a zero of the moment map, say $h\cdot (A,\phi)$, one proves that $h\cdot (A,\phi)$ is also a simple pair, and one with positive maximal weight. Indeed, a semisimple element contradicting stability of $h\cdot (A,\phi)$ could not leave it fixed, since this would contradict the simplicity of $(A,\phi)$ itself. By the explicit computation of the gradient of the moment map, we arrive then at a contradiciton. Now, using suitable inequalities, one proves that
\begin{equation*}
t\sup |g_{\pi,\chi}|\leq C_1 \Psi_{(A,\phi)}(\exp (g_{\pi,\chi})+C_2
\end{equation*}
It is standard from here to prove that the original pair is linearly stable.

\subsection{Uniqueness of solution}
The statement on uniqueness follows on general grounds from the convexity of the integral of the moment map.

\end{document}